\numberwithin{equation}{section}
\DeclareFontFamily{OT1}{rsfs}{}
\DeclareFontShape{OT1}{rsfs}{n}{it}{<-> rsfs10}{}
\DeclareMathAlphabet{\mathscr}{OT1}{rsfs}{n}{it}
\theoremstyle{plain}
\newtheorem{theorem}{Theorem}[section]
\newtheorem{proposition}[theorem]{Proposition}
\newtheorem{lemma}[theorem]{Lemma}
\newtheorem{corollary}[theorem]{Corollary}
\newtheorem{conjecture}[theorem]{Conjecture}
\theoremstyle{definition}
\newtheorem{definition}[theorem]{Definition}
\newtheorem{remark}[theorem]{Remark}
\newtheorem{example}[theorem]{Example}
\newcommand\R{\mathbb{R}}
\newcommand\Z{\mathbb{Z}}
\newcommand\eps{\varepsilon}
\begin{document}

\title[Integration approach to square peg problem]{An integration approach to the Toeplitz square peg problem}

\author{Terence Tao}
\address{UCLA Department of Mathematics, Los Angeles, CA 90095-1555.}
\email{tao@math.ucla.edu}


\subjclass[2010]{55N45}

\begin{abstract}  The ``square peg problem'' or ``inscribed square problem'' of Toeplitz asks if every simple closed curve in the plane inscribes a (non-degenerate) square, in the sense that all four vertices of that square lie on the curve.  By a variety of arguments of a ``homological'' nature, it is known that the answer to this question is positive if the curve is sufficiently regular.  The regularity hypotheses are needed to rule out the possibility of arbitrarily small squares that are inscribed or almost inscribed on the curve; because of this, these arguments do not appear to be robust enough to handle arbitrarily rough curves.

In this paper we augment the homological approach by introducing certain integrals associated to the curve.  This approach is able to give positive answers to the square peg problem in some new cases, for instance if the curve is the union of two Lipschitz graphs $f, g \colon [t_0,t_1] \to \R$ that agree at the endpoints, and whose Lipschitz constants are strictly less than one.  We also present some simpler variants of the square problem which seem particularly amenable to this integration approach, including a periodic version of the problem that is not subject to the problem of arbitrarily small squares (and remains open even for regular curves), as well as an almost purely combinatorial conjecture regarding the sign patterns of sums $y_1+y_2+y_3$ for $y_1,y_2,y_3$ ranging in finite sets of real numbers.
\end{abstract}

\maketitle


\section{Introduction}

A subset $\Gamma$ of the plane $\R^2$ is said to \emph{inscribe a square} if it contains the four vertices of a square of positive sidelength. 
Note that despite the terminology, we do not require the solid square with these four vertices to lie in the region enclosed by $\Gamma$ (in the case that $\Gamma$ is a closed curve); see Figure \ref{inscribed}.  

\begin{figure} [t]
\centering
\includegraphics[width=5in]{./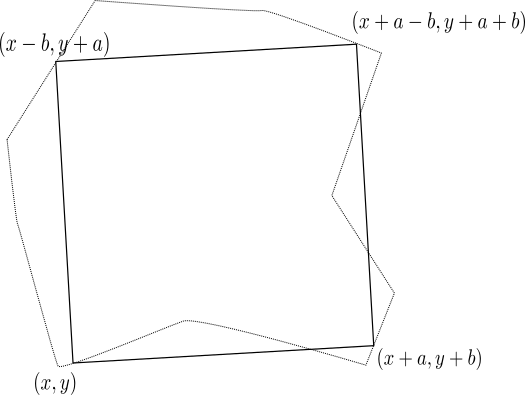}
\caption{A (simple, closed, polygonal) curve inscribing a square.}
\label{inscribed}
\end{figure}

The following conjecture of Toeplitz \cite{toeplitz} is usually referred to as the \emph{Square Peg Problem} or \emph{Inscribed Square Problem}:

\begin{conjecture}[Square Peg Problem]\label{spp}\cite{toeplitz}  Let $\gamma \colon \R/L\Z \to \R^2$ be a simple closed curve.  Then $\gamma(\R/L\Z)$ inscribes a square.
\end{conjecture}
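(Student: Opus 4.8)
The plan is to first establish the conjecture for smooth curves by a degree-theoretic (``homological'') count of inscribed squares, then pass to arbitrary continuous curves by approximation, and only then confront the central obstacle: the possible escape of inscribed squares to zero sidelength. For the smooth case, consider the space of ordered $4$-tuples of points on $\gamma$ that span a square, including the degenerate ``diagonal'' locus where all four points coincide. For a generic smooth simple closed curve, transversality makes the set of \emph{non-degenerate} inscribed squares a compact $0$-manifold, i.e.\ a finite set, and a suitable $\Z/2\Z$-valued (or, using orientations, integer-valued) count of these squares is a bordism invariant depending only on the isotopy class of $\gamma$. Evaluating this invariant on a model curve such as an ellipse — which inscribes exactly one square up to symmetry — shows the count is nonzero, hence every generic smooth curve inscribes a square; a limiting argument then removes the genericity hypothesis.

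Next I would pass from smooth curves to an arbitrary simple closed curve $\gamma \colon \R/L\Z \to \R^2$ by uniform approximation by smooth simple closed curves $\gamma_n$. Each $\gamma_n$ inscribes a square $Q_n$ with vertices $v_n^{(1)},\dots,v_n^{(4)}$; since $\gamma$ is bounded, a Bolzano--Weierstrass argument in the space of $4$-tuples lets us pass to a subsequence along which the four vertices converge to points $v^{(1)},\dots,v^{(4)}$ lying on $\gamma$. All of the defining conditions of a square (equal side lengths, right angles, or equivalently the relations $v^{(1)}+v^{(3)}=v^{(2)}+v^{(4)}$ together with $v^{(3)}-v^{(1)}$ being a $90^\circ$ rotation of $v^{(2)}-v^{(4)}$) are closed, so the limit $4$-tuple again spans a square — but possibly a degenerate one, collapsed to a single point.

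The hard part, and the reason this scheme does not close the problem as stated, is exactly that degeneration in the approximation step: nothing in the bare parity count prevents the squares $Q_n$ from shrinking to zero sidelength as $n \to \infty$. For sufficiently regular curves (e.g.\ $C^2$, or piecewise $C^1$ without wild oscillation) one rules this out by a local analysis near a putative limit point — locally the curve looks like a line segment, and a line segment inscribes no small transverse square, so a quantitative lower bound on the sidelength persists in the limit. For genuinely rough curves no such local bound is available, and one must supplement the homological count with additional \emph{global} data. This is where I would attach integral quantities to $\gamma$: integrals over the configuration space of would-be squares whose non-vanishing forces an inscribed square of sidelength bounded away from zero, and which behave continuously under the approximation of the previous paragraph in a way the naive mod-$2$ count does not. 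Making such integrals simultaneously computable (so that non-vanishing can be verified) and robust under passage to arbitrary continuous curves is the crux of the matter; at present it appears tractable only under extra structural hypotheses — such as $\gamma$ being the union of two Lipschitz graphs whose Lipschitz constants are strictly less than one — which is the partial progress recorded in this paper.
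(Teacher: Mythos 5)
The statement you have been asked to prove is Conjecture \ref{spp}, which is an \emph{open} conjecture; the paper does not prove it, and neither do you — indeed your ``proposal'' is not a proof but rather a candid account of why the problem is hard and where the known methods break down. That account is accurate and aligns closely with the paper's own introduction: the degree/bordism count of inscribed squares for regular curves, the failure of compactness when passing to arbitrary continuous curves (squares potentially degenerating to zero sidelength), and the observation that this degeneration is precisely what blocks a naive limiting argument are all correctly identified and match the paper's narrative.

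What your sketch glosses over, and what the paper actually supplies as its new contribution, is the specific mechanism by which integral quantities supplement the parity count. The paper introduces the area functional $\int_\gamma y\,dx$ (Definition \ref{area-def}) and a conserved quantity along families of squares (Lemma \ref{conserv}), then — crucially — uses the Jordan curve theorem on an auxiliary curve $\gamma_3$ whose simplicity is guaranteed by the small-Lipschitz hypothesis (Proposition \ref{pro}). That Jordan-curve step is the concrete place where the Lipschitz-constant-less-than-one assumption in Theorem \ref{main} is used, and it is the reason the argument does not extend to arbitrary simple closed curves. Your final paragraph gestures at ``integral quantities attached to the configuration space'' but does not identify this specific construction or the specific point of failure without the Lipschitz bound; the paper is quite explicit that without it, $\gamma_3$ need not be simple and the area identity \eqref{conc} no longer forces an intersection. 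So there is a gap in your proposal in the trivial sense that it does not prove the conjecture — but you have correctly diagnosed both that there is a gap and, at least qualitatively, where it lies.
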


In this paper, a curve $\gamma \colon I \to M$ denotes a continuous map from a domain $I$ that is either an interval $[t_0,t_1]$ or a circle $\R/L\Z$ to a manifold $M$, with the curve being called \emph{closed} in the latter case.  Such a curve is called \emph{simple} if $\gamma$ is injective.

A recent survey on this problem, together with an extensive list of further references may be found in \cite{mat}; the brief summary below is drawn from that survey.  

Thanks to arguments of a homological nature, the above conjecture is known assuming that the curve $\gamma$ is sufficiently regular.  For instance:
\begin{itemize}
\item Toeplitz \cite{toeplitz} claimed Conjecture \ref{spp} for convex curves, but did not publish a proof.  This case was partially resolved by Emch \cite{emch-1}, \cite{emch-2} and then fully resolved in \cite{zin}, \cite{christ}; this case can also be deduced from the ``table theorem'' of Fenn \cite{fenn}.
\item Hebbert \cite{hebbert} gave a proof of Conjecture \ref{spp} for quadrilaterals.  A proof of Conjecture \ref{spp} for arbitrary polygons was given in \cite{pak}; see also \cite{sag}, \cite{sag-2} for some further ``discretisations'' of the conjecture.  See \cite{pto} for some computer-assisted quantitative bounds on one such discretisation. 
\item Emch \cite{emch-3} established Conjecture \ref{spp} for piecewise analytic curves.  An alternate proof of the analytic case was obtained by Jerrard \cite{jerrard}.
\item Schnirelman \cite{sch} (see also \cite{gug}) established Conjecture \ref{spp} for continuously twice differentiable curves (and in fact was also able to treat some curves outside this class).  An alternate argument treating continuously twice differentiable curves (obeying an additional technical geometric condition) was obtained by Makeev \cite{makeev}.
\item Nielsen and Wright \cite{niel} established Conjecture \ref{spp} for curves symmetric around a line or point.
\item Stromquist \cite{strom} established Conjecture \ref{spp} for locally monotone curves.  An alternate treatment of this case was given in \cite{vz}.
\item Cantarella, Denne, McCleary \cite{cdm} established Conjecture \ref{spp} for $C^1$ curves, and also for bounded curvature curves without cusps.
\item Matschke \cite{matschke} established Conjecture \ref{spp} for an open dense class of curves (namely, curves that did not contain small trapezoids of a certain form), as well as curves that were contained in annuli in which the ratio between the outer and inner radius is at most $1+\sqrt{2}$.
\end{itemize} 

One can broadly classify the methods of proof in the above positive results as being ``homological'' in nature, in that they use algebraic topology tools such as the intersection product in homology, bordism, equivariant obstruction theory, or more elementary parity arguments counting the number of intersections between curves.  In fact, many of these proofs establish the stronger claim that (under some suitable genericity and regularity hypotheses) there are an \emph{odd} number of squares with vertices lying on the curve.

It is tempting to attack the general case of Conjecture \ref{spp} by approximating the curve $\gamma$ by a curve in one of the above classes, applying an existing result to produce squares with vertices on the approximating curve, and then taking limits somehow (e.g. by a compactness argument).  However, in the absence of some additional regularity hypothesis on the curve, it could conceivably happen that the approximating inscribed squares degenerate to a point in the limit.  Even if the original curve is smooth except for a single singular point, one could imagine that all squares in the approximating curves concentrate into that singular point in the limit.  This scenario of squares degenerating to a point is the primary reason for the inability to remove the regularity hypotheses in the known positive results on the problem. 

In this paper we propose to modify the homological approach to the inscribed square problem, by focusing not only on the parity of intersections between various geometric objects associated to the curve $\gamma$, but also on bounding certain integrals associated to these curves.  As with previous works, one requires a certain amount of regularity (such as rectifiability, Lipschitz continuity, or piecewise linearity) on the curves in order to initially define these integrals; but the integrals enjoy more stability properties under limits than intersection numbers, and thus may offer a route to establish more cases of Conjecture \ref{spp}.  As an instance of this, we give the following positive result towards this conjecture, which appears to be new.  For any $I \subset \R$ and any function $f \colon I \to \R$, we define the \emph{graph} $\mathtt{Graph}_f \colon I \to \R^2$ to be the function $\mathtt{Graph}_f(t) \coloneqq (t,f(t))$, so in particular $\mathtt{Graph}_f(I) \subset \R^2$ is the set
$$ \mathtt{Graph}_f(I) \coloneqq \{ (t, f(t)): t \in I \}.$$  
Such a function $f$ is said to be \emph{$C$-Lipschitz} for a given $C>0$ if one has $|f(s)-f(t)| \leq C|s-t|$ for all $s,t \in I$.  Similarly if $f$ is defined on a circle $\R/L\Z$ rather than an interval $I$ (using the usual Riemannian metric on $\R/L\Z$).

\begin{theorem}[The case of small Lipschitz constant]\label{main}  Let $[t_0,t_1]$ be an interval, and let $f,g \colon [t_0,t_1] \to \R$ be $(1-\eps)$-Lipschitz functions for some $\eps>0$.  Suppose also that $f(t_0)=g(t_0), f(t_1)=g(t_1)$, and $f(t) < g(t)$ for all $t_0 < t < t_1$.  Then the set
\begin{equation}\label{set} 
\mathtt{Graph}_f([t_0,t_1]) \cup \mathtt{Graph}_g([t_0,t_1])
\end{equation}
inscribes a square.
\end{theorem}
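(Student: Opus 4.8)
The plan is to realise an inscribed square of $\Gamma \coloneqq \mathtt{Graph}_f([t_0,t_1])\cup\mathtt{Graph}_g([t_0,t_1])$ (a simple closed curve, by the hypotheses) as a zero of an explicit continuous map on a two–dimensional parameter region, and to detect that zero by a winding–number integral attached to $\Gamma$ — the strict bound $1-\eps<1$ being exactly what forces that integral to be nonzero. The starting point is structural: a chord of $\mathtt{Graph}_f$ or $\mathtt{Graph}_g$ has slope of absolute value at most $1-\eps$, while a square has a pair of mutually perpendicular sides and a pair of mutually perpendicular diagonals, one member of each pair having slope of absolute value $\ge 1$. Hence no inscribed square of $\Gamma$ can have three or four vertices on a single graph, so every inscribed square meets each of $\mathtt{Graph}_f,\mathtt{Graph}_g$ in exactly two adjacent vertices and has (at least) one ``steep'' diagonal, of slope of absolute value $\ge 1$ or vertical, joining a point of $\mathtt{Graph}_f$ to a point of $\mathtt{Graph}_g$. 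I parametrise candidate squares by such a diagonal: for $(a,b)$ set $P=(a,f(a))$, $Q=(b,g(b))$, $M=(P+Q)/2$, and, identifying $\R^2\cong\C$, put $V_{\pm}\coloneqq M\pm i(Q-P)/2$, so that $P,V_+,Q,V_-$ is a square, degenerate precisely when $P=Q$, i.e.\ precisely when $(a,b)\in\{(t_0,t_0),(t_1,t_1)\}$. Let $R\subset[t_0,t_1]^2$ be the compact region on which $PQ$ is steep and $V_\pm$ have abscissae in $[t_0,t_1]$; using the Lipschitz bound one checks that $R$ is a neighbourhood of the diagonal $\{a=b\}$ whose only ``corner'' singularities are the two points above. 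On $R$ define the defect map
$$ \Psi(a,b) \coloneqq \big(\phi(V_+),\,\phi(V_-)\big), \qquad \phi(x,y) \coloneqq (y-f(x))\,(y-g(x)), $$
which is continuous and vanishes exactly when the corresponding candidate square is inscribed in $\Gamma$.

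I would next show $\Psi\ne0$ on $\partial R$. On the ``outer'' part of $\partial R$ — where $PQ$ is as steep as the geometry of $\Gamma$ allows (slope $\pm1$, i.e.\ an axis–parallel candidate square), or where $V_+$ or $V_-$ reaches abscissa $t_0$ or $t_1$ — the Lipschitz bound pushes $V_+$ or $V_-$ strictly off $\Gamma$, so $\Psi\ne0$ there, after if necessary a harmless shrinking or generic perturbation of $R$. Near the corners $(t_0,t_0),(t_1,t_1)$ I would excise small disks and verify, by a local computation expressing $\Psi$ through the one–sided slopes of $f,g$ at the endpoint (and again using $1-\eps<1$), that $\Psi$ is nonzero on the excised arcs. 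Then the Brouwer degree
$$ N(\Gamma) \coloneqq \deg(\Psi,R,0) \;=\; \frac{1}{2\pi}\oint_{\partial R} d\big(\arg\Psi\big)\ \in\ \Z $$
is well defined — this is the integral associated to $\Gamma$ — every interior zero of $\Psi$ is a \emph{non-degenerate} inscribed square (the degenerate candidates all lie on $\partial R$), and $N(\Gamma)$ is stable under uniform limits of $(f,g)$ that keep $\Psi$ nonvanishing on $\partial R$, being a boundary integral.

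It remains to evaluate $N(\Gamma)$, or at least its parity. Splitting $\partial R$ into its outer part and the two corner arcs, the outer contribution is computed once and for all and depends only on the combinatorics of the construction, not on $f,g$; the corner contributions depend only on the one–sided endpoint slopes of $f$ and $g$, and their \emph{signs} — hence the corner winding numbers — are forced by those slopes having absolute value $<1$. I expect $N(\Gamma)$ to come out odd, so nonzero; then $\Psi$ has an interior zero and $\Gamma$ inscribes a non-degenerate square. For $f,g$ merely Lipschitz the above applies verbatim, since degree theory requires only continuity; alternatively one first approximates $f,g$ by piecewise–linear $(1-\tfrac\eps2)$–Lipschitz functions satisfying the same endpoint and ordering conditions, runs the argument there, and passes to the limit — legitimate precisely because $N$ is a boundary integral and $R$ keeps the limiting square away from the degenerate corners.

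The main obstacle I anticipate is this boundary evaluation, and within it the analysis near $(t_0,t_0)$ and $(t_1,t_1)$: proving $\Psi$ is nonvanishing on small arcs there and computing the associated winding contributions. This is the step that genuinely consumes the hypothesis that the Lipschitz constant is $<1$. If it were allowed to equal $1$, the two graphs could meet at a common endpoint with interior angle $\le\pi/2$, creating genuine arbitrarily small inscribed squares accumulating at that endpoint; then $\Psi$ would vanish on every arc about the corner, the corner winding contribution would be undefined, and $N(\Gamma)$ could be anything — exactly the degeneration that the integration approach, here powered by the strict bound $1-\eps<1$, is meant to preclude.
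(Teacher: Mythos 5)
Your proposal is a degree-theoretic (winding-number) argument: parametrise candidate squares by a steep diagonal $(a,b)$, define a defect map $\Psi$, and try to show $\deg(\Psi,R,0)\neq 0$. This is genuinely different from the paper's proof, which instead builds a \emph{one}-parameter family of squares $(\gamma_1(t),\dots,\gamma_4(t))$ via the contraction mapping theorem, applies the conserved-integral identity of Lemma~\ref{conserv} to deduce $\int_{\gamma_3} y\,dx = \int_{\mathtt{Graph}_g} y\,dx$, and then invokes Lemma~\ref{repo} (Jordan curve theorem) to force $\gamma_3$ to cross $\mathtt{Graph}_g$ at an interior point. The paper's route never needs to know anything local about $f,g$ at $t_0,t_1$ beyond $a(t_0)=b(t_0)=a(t_1)=b(t_1)=0$.

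The gap in your argument is exactly the step you flag as ``the main obstacle,'' and it is not a detail to be filled in later --- it is where the approach fails. You propose to control the winding contribution near the corners $(t_0,t_0),(t_1,t_1)$ ``by a local computation expressing $\Psi$ through the one-sided slopes of $f,g$ at the endpoint.'' But $(1-\eps)$-Lipschitz functions need not have one-sided derivatives at $t_0,t_1$; the difference quotients can oscillate indefinitely between $\pm(1-\eps)$, and the zero set of $\Psi$ can accumulate at the corner (equivalently, the curve $\gamma_3$ of the paper's proof can cross $\mathtt{Graph}_g$ transversally infinitely often near the endpoints). In that situation a disk around the corner on which $\Psi$ is nonvanishing may not exist --- and even when it does, the local winding number is a function of the fine oscillatory structure of $f,g$ and cannot be read off from any ``slope.'' Likewise your approximate-then-pass-to-the-limit fallback does not close the gap: homotopy invariance of degree requires $\Psi$ to remain nonzero on $\partial R$ throughout the deformation, and as the piecewise-linear approximations converge the relevant zeros can migrate into the excised corner disks, collapsing the inscribed squares to a point. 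This is precisely the ``squares degenerating to a point'' obstruction described in the introduction, and the paper's own Remark after the proof of Theorem~\ref{main} spells it out: the homological/parity argument needs the extra regularity $g'(t_0)>f'(t_0)$, $g'(t_1)<f'(t_1)$ at the endpoints, which your hypotheses do not supply; the conserved-integral argument is what removes that requirement. Without a new idea that replaces the corner winding-number computation by something insensitive to endpoint oscillation, your proof does not go through.
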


In other words, Conjecture \ref{spp} holds for curves that traverse two Lipschitz graphs, as long as the Lipschitz constants are strictly less than one.  The condition of having Lipschitz constant less than one is superficially similar to the property of being locally monotone, as considered in the references \cite{strom}, \cite{vz} above; however, due to a potentially unbounded amount of oscillation at the endpoints $\mathtt{Graph}_f(t_0) = \mathtt{Graph}_g(t_0)$ and $\mathtt{Graph}_f(t_1) = \mathtt{Graph}_g(t_1)$, the sets in Theorem \ref{main} are not necessarily locally monotone at the endpoints, and so the results in \cite{strom}, \cite{vz} do not directly imply Theorem \ref{main}.  Similarly for the other existing positive results on the square peg problem.

We prove Theorem \ref{main} in Section \ref{pos-sec}.  A key concept in the proof will be the notion of the (signed) area $\int_\gamma y\ dx$ under a rectifiable curve $\gamma$; see Definition \ref{area-def}.  This area can be used to construct a ``conserved integral of motion'' when one traverses the vertices of a continuous family of squares; see Lemma \ref{conserv}.  Theorem \ref{main} will then follow by applying the contraction mapping theorem to create such a continuous family of squares to which the conservation law can be applied, and then invoking the Jordan curve theorem to obtain a contradiction.  The hypothesis of $f,g$ having Lipschitz constant less than one is crucially used to ensure that the curve that one is applying the Jordan curve theorem to is simple; see Proposition \ref{pro}.  

Since the initial release of this paper, we have learned that a very similar method was introduced by Karasev \cite{karasev} to obtain a partial result on the related problem of Makeev \cite{makeev} of inscribing a given cyclic quadrilateral in a simple smooth closed curve, and indeed the proof of Theorem \ref{main} can be generalised to handle inscribing an equiangular trapezoid if the Lipschitz constants are small enough; see Remark \ref{bm}.  We thank Benjamin Matschke for this reference and observation.

Without the hypothesis of small Lipschitz constant, the Jordan curve theorem is no longer available, and we do not know how to adapt the argument to prove Conjecture \ref{spp} in full generality.  However, in later sections of the paper we present some related variants of Conjecture \ref{spp} which seem easier to attack by this method, including a periodic version (Conjecture \ref{pspp}), an area inequality version (Conjecture \ref{area-ineq}), and a combinatorial version (Conjecture \ref{adf}).  In contrast to the original square problem, the periodic version remains open even when the curves are piecewise linear (and this case seems to contain most of the essential difficulties of the problem).  Conjecture \ref{area-ineq} is the strongest of the new conjectures in this paper, implying all the other conjectures stated here except for the original Conjecture \ref{spp} (although it would imply some new cases of that conjecture).  Conjecture \ref{adf} is a simplified version of Conjecture \ref{area-ineq} that is an almost purely combinatorial claim regarding the sign patterns of triple sums $y_1+y_2+y_3$ of numbers $y_1, y_2, y_3$ drawn from finite sets of real numbers. It seems to be a particularly tractable ``toy model'' for the original problem, though the author was not able to fully resolve it.  The logical chain of dependencies between these conjectures, as well as some more technical variants of these conjectures that will be introduced in later sections, is summarised as follows, in which each conjecture is annotated with a very brief description:
$$
\left.
\begin{array}{ccccccc}
\stackrel{\text{(square peg)}}{\text{Conj. \ref{spp}}} & & \stackrel{\text{(quadripartite)}}{\text{Conj. \ref{qpspp}}} & \impliedby & \stackrel{\text{(area ineq.)}}{\text{Conj. \ref{area-ineq}}} & & \stackrel{\text{(combinatorial)}}{\text{Conj. \ref{adf}}} \\
\Downarrow & & \Downarrow & & \Downarrow & & \Updownarrow \\
\stackrel{\text{(special periodic)}}{\text{Conj. \ref{pspps}}} & \impliedby & \stackrel{\text{(periodic)}}{\text{Conj. \ref{pspp}}} & & \stackrel{\text{(special area ineq.)}}{\text{Conj. \ref{area-1d}}} & \iff & \stackrel{\text{(sym. special area ineq.)}}{\text{Conj. \ref{sai}}}
\end{array}
\right.
$$

The author is supported by NSF grant DMS-1266164, the James and Carol Collins Chair, and by a Simons Investigator Award.  He also thanks Mark Meyerson, Albert Hasse, the anonymous referees, and anonymous commenters on his blog for helpful comments, suggestions, and corrections.

\section{Notation}

Given a subset $E$ of a vector space $V$ and a shift $h \in V$, we define the translates
$$ E+h \coloneqq \{ p+h: p \in E \}.$$
Given two subsets $E,F$ in a metric space $(X,d)$, we define the distance
$$ \mathtt{dist}(E,F) \coloneqq \inf_{p \in E, q \in F} d(p,q).$$

We use the asymptotic notation $X=O(Y)$ to denote an estimate of the form $|X| \leq CY$ for some implied constant $C$; in many cases we will explicitly allow the implied constant $C$ to depend on some of the ambient parameters.  If $n$ is an asymptotic parameter going to infinity, we use $X=o(Y)$ to denote an estimate of the form $|X| \leq c(n) Y$ where $c(n)$ is a quantity depending on $n$ (and possibly on other ambient parameters) that goes to zero as $n \to \infty$ (holding all other parameters fixed).

We will use the language of singular homology (on smooth manifolds) in this paper, thus for instance a \emph{$1$-chain} in a manifold $M$ is a formal integer linear combination of curves $\gamma: I \to M$, and a \emph{$1$-cycle} is a $1$-chain $\sigma$ whose boundary $\partial \sigma$ vanishes.  Two $k$-cycles are \emph{homologous} if they differ by a $k$-boundary, that is to say the boundary $\partial U$ of a $k+1$-cycle.  We integrate (continuous) $k$-forms on (piecewise smooth) $k$-chains in the usual fashion, for instance if $\sigma = \sum_{i=1}^n c_i \gamma_i$ is a $1$-chain that is an integer linear combination of curves $\gamma_i$, and $\theta$ is a $1$-form, then $\int_\sigma \theta \coloneqq \sum_{i=1}^n c_i \int_{\gamma_i} \sigma$.
See for instance \cite{hatcher} for the formal definitions of these concepts and their basic properties.  We will not use particularly advanced facts from singular homology; perhaps the most important fact we will use is the claim that if two (piecewise linear) cycles $\gamma_1, \gamma_2$ in an oriented manifold are homologous and intersect a smooth oriented submanifold $V$ (without boundary) transversely, then their intersections $\gamma_1 \cap V$, $\gamma_2 \cap V$ are homologous cycles in $V$.  Indeed, if $\gamma_1$ and $\gamma_2$ differ by the boundary of some cycle $\sigma$, then $\gamma_1 \cap V$ and $\gamma_2 \cap V$ differ by the boundary of $\sigma \cap V$ (viewed as a cycle with an appropriate orientation); alternatively, one may use Poincar\'e duality and the theory of the cup product.

\section{Proof of positive result}\label{pos-sec}

We now prove Theorem \ref{main}.  
It will be convenient to give a name to the space of all squares.

\begin{definition}[Squares]\label{sqdef}
We define $\mathtt{Squares} \subset (\R^2)^4$ to be the set of all quadruples of vertices of squares in $\R^2$ traversed in anticlockwise order; more explicitly, we have
$$ \mathtt{Squares} \coloneqq \{ ((x,y), (x+a, y+b), (x+a-b, y+a+b), (x-b, y+a)): x,y,a,b \in \R; (a,b) \neq (0,0) \}.$$
By abuse of notation we refer to elements of $\mathtt{Squares}$ as (non-degenerate) squares.
Thus we see that a set $\Gamma \subset \R^2$ inscribes a square if and only if $\Gamma^4$ intersects $\mathtt{Squares}$.  We also form the closure 
$$ \overline{\mathtt{Squares}} \coloneqq \{ ((x,y), (x+a, y+b), (x+a-b, y+a+b), (x-b, y+a)): x,y,a,b \in \R \}$$
 in which the sidelength of the square is allowed to degenerate to zero; this is a four-dimensional linear subspace of $(\R^2)^4$.  A quadruple $(\gamma_1,\gamma_2,\gamma_3,\gamma_4)$ of curves $\gamma_1,\gamma_2,\gamma_3,\gamma_4 \colon [t_0,t_1] \to \R^2$ is said to \emph{traverse squares} if one has $(\gamma_1(t), \gamma_2(t), \gamma_3(t), \gamma_4(t)) \in \overline{\mathtt{Squares}}$ for all $t \in [t_0,t_1]$; note that we allow the square traversed to degenerate to a point.  Equivalently, $(\gamma_1,\gamma_2,\gamma_3,\gamma_4)$ traverses squares if and only if there exist continuous functions $x,y,a,b \colon [t_0,t_1] \to \R^2$ such that
\begin{align*}
\gamma_1(t) &= (x(t), y(t)) \\
\gamma_2(t) &= (x(t) + a(t), y(t) + b(t)) \\
\gamma_3(t) &= (x(t) + a(t) - b(t), y(t) + a(t) + b(t)) \\
\gamma_4(t) &= (x(t) - b(t), y(t) + a(t)).
\end{align*}
\end{definition}

We will also need the notion of the area under a (rectifiable) curve.  Recall that a curve $\gamma \colon [t_0,t_1] \to \R^2$ is rectifiable if the sums $\sum_{i=0}^{n-1} |\gamma(s_{i+1}) - \gamma(s_i)|$ are bounded for all partitions $t_0 = s_0 < s_1 < \dots < s_n = t_1$.  If we write $\gamma(t) = (x(t), y(t))$, this is equivalent to requiring that the functions $x, y \colon [t_0,t_1] \to \R^2$ are of bounded variation.

\begin{definition}[Area under a curve]\label{area-def}  Let $\gamma \colon [t_0,t_1] \to \R^2$ be a rectifiable curve, and write $\gamma(t) = (x(t),y(t))$ for $t \in [t_0,t_1]$.  The \emph{area under $\gamma$}, denoted by $\int_\gamma y\ dx$, is defined to be the real number
$$ \int_\gamma y\ dx \coloneqq \int_{t_0}^{t_1} y(t)\ dx(t)$$
where the integral on the right-hand side is in the Riemann-Stieltjes sense, that is to say the limit of $\sum_{i=0}^{n-1} y(s_i^*) (x(s_{i+1}) - x(s_i))$ for any partition $t_0 = s_0 < \dots < s_n = t_1$ and $s_i \leq s_i^* \leq s_{i+1}$ as $\max_{0 \leq i \leq n-1} |s_{i+1} - s_i|$ goes to zero.
\end{definition}

Of course, if $\gamma$ is piecewise smooth, this definition of $\int_\gamma y\ dx$ agrees with the usual definition of $\int_\gamma y\ dx$ as the integral of the $1$-form $y\ dx$ on $\gamma$ (now viewed as a $1$-chain).

\begin{example}\label{integ} If $f \colon [t_0,t_1] \to \R$ is continuous of bounded variation, then the area under the curve $\mathtt{Graph}_f$ is just
the usual Riemann integral:
$$ \int_{\mathtt{Graph}_f} y\ dx = \int_{t_0}^{t_1} f(t)\ dt.$$
In particular, if $f$ is positive, $\int_{\mathtt{Graph}_f} y\ dx$ is the area of the region bounded by $\mathtt{Graph}_f$, the real axis, and the vertical lines $\{t_0,t_1\} \times \R$.  If $\gamma$ is not a graph, the area under $\gamma$ is more complicated; see Figure \ref{area}.  
\end{example}

\begin{figure} [t]
\centering
\includegraphics[width=5in]{./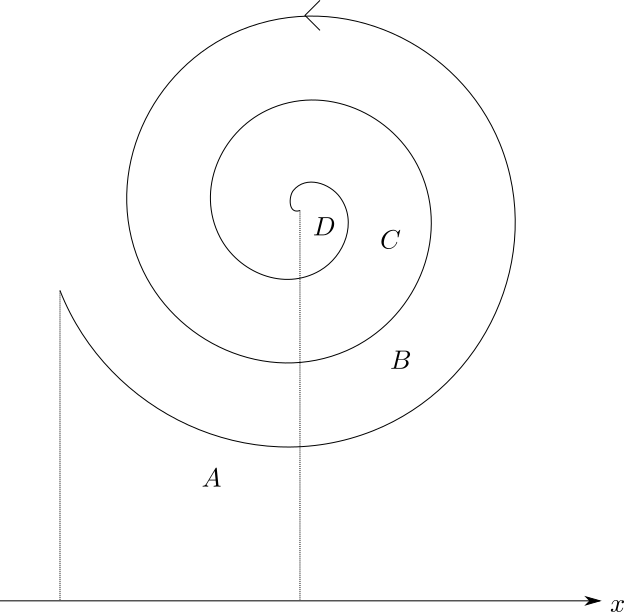}
\caption{The area under the spiral (with the anticlockwise orientation) is equal to $|A| - |B| - 2|C| - 3|D|$, where $|A|$ is the area of the region $A$, and similarly for $|B|$, $|C|$, $|D|$; the weights $+1, -1, -2, -3$ attached to $A,B,C,D$ here are the winding numbers of the spiral together with the horizontal and vertical line segments used to close up the curve.}
\label{area}
\end{figure}

It will be particularly important to understand the area under a closed curve:

\begin{lemma}\label{repo}  Let $\gamma \colon [t_0, t_1] \to \R^2$ be a simple closed anticlockwise rectifiable curve, and let $\Omega$ be the bounded open region enclosed by this curve as per the Jordan curve theorem.  Then the area under the curve $\gamma$ is then equal to the negative of the Lebesgue measure $|\Omega|$ of $\Omega$.  In particular, this area is non-zero.
\end{lemma}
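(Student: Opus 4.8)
\emph{The plan.} This lemma is the extension of Green's theorem (equivalently, the shoelace formula for area) from piecewise linear curves to arbitrary rectifiable Jordan curves. For the curves actually used in Theorem \ref{main} --- unions $\mathtt{Graph}_f([t_0,t_1]) \cup \mathtt{Graph}_g([t_0,t_1])$ traversed as $\mathtt{Graph}_f$ forwards then $\mathtt{Graph}_g$ backwards --- it is elementary: by Example \ref{integ} the area under the curve is $\int_{t_0}^{t_1} f(t)\,dt - \int_{t_0}^{t_1} g(t)\,dt = -\int_{t_0}^{t_1}(g(t)-f(t))\,dt = -|\Omega|$. For a general rectifiable $\gamma$ the natural strategy is polygonal approximation, the one subtlety being that the approximating polygons need not be simple, so one must control their winding numbers in an $L^1$ sense rather than argue geometrically.

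\emph{Approximation and the polygonal case.} Fix partitions $t_0 = s_0^{(n)} < \dots < s_{k_n}^{(n)} = t_1$ of mesh $\to 0$, and let $\gamma_n$ be the closed polygon joining the consecutive points $\gamma(s_i^{(n)})$. A direct computation on each edge gives $\int_{\gamma_n} y\,dx = \sum_i \tfrac{1}{2}(y(s_i)+y(s_{i+1}))(x(s_{i+1})-x(s_i))$, and since $y$ is continuous the average $\tfrac12(y(s_i)+y(s_{i+1}))$ equals $y(s_i^*)$ for some $s_i^* \in [s_i,s_{i+1}]$ by the intermediate value theorem; hence this is a Riemann--Stieltjes sum for $\int_{t_0}^{t_1} y(t)\,dx(t)$ and converges to it as the mesh shrinks (the integral exists because $y$ is continuous and $x$ has bounded variation). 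Thus $\int_{\gamma_n} y\,dx \to \int_\gamma y\,dx$. On the other hand, for the polygonal cycle $\gamma_n$ one has the elementary identity $\int_{\gamma_n} y\,dx = -\int_{\R^2} w(\gamma_n, z)\,dz$, where $w(\gamma_n,z)$ is the winding number of $\gamma_n$ about $z$: this is Stokes' theorem applied to the piecewise linear $2$-chain bounded by $\gamma_n$ --- the sum over the finitely many bounded complementary faces $P$, each weighted by the winding number of $\gamma_n$ on $P$ --- using $d(y\,dx) = -dx \wedge dy$; this is exactly the computation illustrated by Figure \ref{area}.

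\emph{The heart: $w(\gamma_n,\cdot) \to \mathbf{1}_\Omega$ in $L^1(\R^2)$.} I would prove this by combining two facts. First, pointwise almost everywhere convergence: if $z \notin \gamma([t_0,t_1])$ then, once $n$ is large enough that every edge of $\gamma_n$ lies within $\mathtt{dist}(z,\gamma)$ of $\gamma$ (possible since $\gamma$ is uniformly continuous), the straight-line homotopy between $\gamma$ and $\gamma_n$ avoids $z$, so $w(\gamma_n, z) = w(\gamma, z)$; and by the Jordan curve theorem together with the orientation hypothesis (``anticlockwise'' meaning that $\gamma$ has winding number $+1$ about the points of $\Omega$), $w(\gamma, \cdot) = \mathbf{1}_\Omega$ off $\gamma$, while $\gamma$ is Lebesgue-null since it is rectifiable. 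Second, a uniform $L^1$ bound on the error. Writing the arc of $\gamma$ over $[s_i^{(n)}, s_{i+1}^{(n)}]$ followed by the reversed $i$-th edge of $\gamma_n$ as a closed loop $\ell_i^{(n)}$, one has the chain identity $\gamma - \gamma_n = \sum_i \ell_i^{(n)}$, hence $w(\gamma,\cdot) - w(\gamma_n,\cdot) = \sum_i w(\ell_i^{(n)},\cdot)$ almost everywhere. Each $\ell_i^{(n)}$ has diameter at most $\omega(\mathrm{mesh})$, where $\omega$ is the modulus of continuity of $\gamma$; and slicing by horizontal lines (the coarea/Banach indicatrix estimate) gives $\int_{\R^2}|w(\ell_i^{(n)},z)|\,dz \le \mathrm{diam}(\ell_i^{(n)})\cdot\mathrm{length}(\ell_i^{(n)})$. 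Summing over $i$ and using $\sum_i \mathrm{length}(\ell_i^{(n)}) \le \mathrm{length}(\gamma) + \mathrm{length}(\gamma_n) \le 2\,\mathrm{length}(\gamma)$ gives $\|w(\gamma,\cdot) - w(\gamma_n,\cdot)\|_{L^1} \le 2\,\mathrm{length}(\gamma)\cdot\omega(\mathrm{mesh}) \to 0$, which together with the almost everywhere convergence yields $w(\gamma_n,\cdot) \to \mathbf{1}_\Omega$ in $L^1$.

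\emph{Conclusion and the main obstacle.} Combining the three steps, $\int_\gamma y\,dx = \lim_n \int_{\gamma_n} y\,dx = -\lim_n \int_{\R^2} w(\gamma_n,z)\,dz = -\int_{\R^2}\mathbf{1}_\Omega = -|\Omega|$; and $|\Omega|>0$ since $\Omega$ is a nonempty open set, so the area is nonzero. The hard part is the second half of the heart step: a priori the polygonal approximants, although close to a simple curve, can fail to be simple and could carry large winding numbers on a set of positive measure; what rescues the argument is that all such behaviour is confined to the small loops $\ell_i^{(n)}$, whose total winding-number mass is controlled by the fixed, finite length of $\gamma$ times the mesh. (If one prefers to stay entirely within the polygonal category, one can instead show that $(w(\gamma_n,\cdot))_n$ is $L^1$-Cauchy by applying the same loop estimate to a common refinement of two partitions, and identify the limit as $\mathbf{1}_\Omega$ via the almost everywhere convergence.)
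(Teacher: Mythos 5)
Your proof is correct, and it takes a genuinely different route from the paper's. Both arguments approximate $\gamma$ by the inscribed polygons $\gamma_n$ and must cope with the fact that $\gamma_n$ need not be simple, but they handle that failure differently. The paper decomposes $\gamma_n$ itself as a $1$-cycle into a simple closed subpolygon $\gamma_n^0$ plus a collection of small-diameter closed loops, and then has to work (via a Rouch\'e-type winding-number comparison with a point deep inside $\Omega$, and careful $\eps_0,\eps_1,\eps_2$ bookkeeping) to show that $\gamma_n^0$ is anticlockwise and encloses a region $\Omega_n$ with $|\Omega_n|=|\Omega|+O(\eps_0)$, and separately that the small loops contribute $O(\eps_0)$ to $\int y\,dx$. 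You instead decompose the \emph{difference} $\gamma-\gamma_n$ into the small loops $\ell_i^{(n)}$ (arc followed by reversed chord), which is immediate, and work systematically with the winding-number integral formula $\int_{\gamma_n} y\,dx=-\int_{\R^2}w(\gamma_n,z)\,dz$ valid for any closed polygon; the single technical point you need is the coarea/Banach-indicatrix bound $\int_{\R^2}|w(\ell,z)|\,dz\le\mathrm{diam}(\ell)\cdot\mathrm{length}(\ell)$, which together with the fixed total length of $\gamma$ gives $L^1$ convergence of the winding numbers. Your version avoids the somewhat delicate extraction of a simple subpolygon and the comparison of the two enclosed regions, at the price of invoking (and having to justify) the area-versus-winding-number identity for non-simple polygons and the indicatrix estimate; it is arguably the cleaner route, and it also gives a quantitative rate $\|w(\gamma_n,\cdot)-\mathbf{1}_\Omega\|_{L^1}\lesssim\mathrm{length}(\gamma)\cdot\omega(\mathrm{mesh})$ that the paper's argument does not make explicit.
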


Of course, if $\gamma$ were clockwise instead of anticlockwise, then the negative sign in the above lemma would be removed; however, it would still be true that the area under this curve is non-zero.

\begin{proof}
In the case that $\gamma$ is a polygonal path, this claim is clear from Stokes' theorem:
\begin{equation}\label{stok}
 \int_\gamma y\ dx = \int_{\partial \Omega} y\ dx = - \int_\Omega dx \wedge dy.
\end{equation}
Now we consider the general case.  The strategy is to approximate $\gamma$ by a polygonal path, apply \eqref{stok}, and take limits; but (as with the proof of the Jordan curve theorem) some care must be taken with the limiting argument.

We can normalise $[t_0,t_1] = [0,1]$.  Let $\eps_0>0$ be a small parameter (which will eventually be sent to zero).  By continuity of $\gamma$, there exists 
$0 < \eps_1 < \eps_0$ such that $|\gamma(t)-\gamma(t')| \leq \eps_0$ whenever $t \hbox{ mod } 1$ and $t' \hbox{ mod } 1$ are separated by distance at least $\eps_1$ on the unit circle $\R/\Z$.  By a further application of continuity and the hypothesis that $\gamma$ is simple, there exists $0 < \eps_2 < \eps_1$ such that $|\gamma(t)-\gamma(t')| \geq \eps_2$ whenever $t \hbox{ mod } 1$ and $t' \hbox{ mod } 1$ are separated by distance at least $\eps_1$ on the unit circle $\R/\Z$.  Now let $n$ be a natural number, that we assume to be sufficiently large depending on $\gamma, \eps_0, \eps_1, \eps_2$.  Let $\gamma_n: [0,1] \to \R^2$ be the piecewise polygonal path formed by joining up the points $\gamma_n(j/n) \coloneqq \gamma(j/n)$ for $j=0,\dots,n$ by line segments, thus 
$$\gamma_n\left(\frac{j+\theta}{n}\right) \coloneqq (1-\theta) \gamma\left(\frac{j}{n}\right) + \theta \gamma\left(\frac{j+1}{n}\right)$$
for $j=0,\dots,n-1$ and $0 \leq \theta \leq 1$.  As $\gamma$ is uniformly continuous, we see for $n$ large enough that
\begin{equation}\label{gaga}
 |\gamma_n(t) - \gamma(t)| < \frac{\eps_2}{2}
\end{equation}
for all $t \in [0,1]$.  Also, it is clear that the length of $\gamma_n$ is bounded by the length of the rectifiable curve $\gamma$.

The path $\gamma_n$ is closed, but it need not be simple.  However, from \eqref{gaga}, the triangle inequality, and the construction of $\eps_2$, we see that a collision $\gamma_n(t) = \gamma_n(t')$ can only occur if $t \hbox{ mod } 1$ and $t' \hbox{ mod } 1$ differ by at most $\eps_1$ in the unit circle.  In such a case, $\gamma_n$ can be viewed as the sum (in the sense of $1$-cycles) of two closed polygonal paths, one of which has diameter at most $\eps_0$.  Deleting the latter path and iterating, we conclude that $\gamma_n$ can be viewed as the sum of a simple closed polygonal path $\gamma_n^0$ and a finite number of closed polygonal paths $\gamma_n^1,\dots,\gamma_n^k$ of diameter at most $\eps_0$; furthermore, the total lengths of $\gamma_n^0, \gamma_n^1, \dots, \gamma_n^k$ sum up to at most the length of $\gamma$, and from \eqref{gaga} we see that the curves $\gamma_n^1,\dots,\gamma_n^k$ lie within the $2\eps_0$-neighbourhood of $\gamma$.

If $\eps_0$ is small enough, we can find a point $z$ in $\Omega$ that is at a distance at least $10 \eps_0$ from $\gamma$.  The winding number of $\gamma$ around $z$ is equal to $1$.  By Rouche's theorem, the winding number of $\gamma_n$ around $z$ is then also equal to $1$, while the winding numbers of $\gamma_n^1,\dots,\gamma_n^k$ around $z$ are equal to $0$.  Thus the winding number of $\gamma_n^0$ around $z$ is equal to $1$; thus $\gamma_n^0$ has an anticlockwise orientation, and $z$ lies in the region $\Omega_n$ enclosed by $\gamma_n^0$.  This argument also shows that the symmetric difference $\Omega \Delta \Omega_n$ between $\Omega$ and $\Omega_n$ lies in the $\eps_0$-neighbourhood of $\gamma$.  As $\gamma$ is rectifiable, this implies that
$$ | \Omega_n | = |\Omega| + O(\eps_0)$$
where the implied constant in the $O()$ notation depends on the length of $\gamma$.  On the other hand, from \eqref{stok} one has
$$ \int_{\gamma_n^0} y\ dx = - |\Omega_n|.$$
For each $i=1,\dots,k$, we see that
$$ \int_{\gamma_n^i} y\ dx = \int_{\gamma_n^i} (y - y^i)\ dx = O( \eps_0 \ell(\gamma_n^i) )$$
where $y_i$ is the $y$-coordinate of an arbitrary point in $\gamma_n^i$, and $\ell(\gamma_n^i)$ denotes the length of $\gamma_n^i$.  Summing, we conclude that
$$ \sum_{i=1}^k \int_{\gamma_n^i} y\ dx = O(\eps_0).$$
Finally, for $n$ sufficiently large, we have from the rectifiability of $\gamma$ that
$$ \int_{\gamma_n} y\ dx = \int_\gamma y\ dx + O(\eps_0).$$
Putting all these bounds together, we conclude that
$$\int_\gamma y\ dx = - |\Omega| + O(\eps_0);$$
since $\eps>0$ can be made arbitrarily small, the claim follows.
\end{proof}

The relevance of this area concept to the square peg problem lies in the following simple identity.

\begin{lemma}[Conserved integral of motion for squares]\label{conserv}  Let $\gamma_1, \gamma_2, \gamma_3, \gamma_4 \colon [t_0,t_1] \to \R^2$ be rectifiable curves such that $(\gamma_1,\gamma_2,\gamma_3,\gamma_4)$ traverses squares (as defined in Definition \ref{sqdef}).
Then we have the identity
\begin{equation}\label{ident}
\int_{\gamma_1} y\ dx - \int_{\gamma_2} y\ dx + \int_{\gamma_3} y\ dx - \int_{\gamma_4} y\ dx 
 = \frac{a(t_1)^2 - b(t_1)^2}{2} - \frac{a(t_0)^2 - b(t_0)^2}{2},
\end{equation}
where the functions $x,y,a,b \colon [t_0,t_1] \to \R$ are as in Definition \ref{sqdef}.
\end{lemma}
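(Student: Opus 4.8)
The plan is to reduce the claimed identity to a pointwise (or infinitesimal) computation expressing the integrand on the left-hand side as an exact differential. Writing $\gamma_i(t)=(x_i(t),y_i(t))$ in terms of the auxiliary functions $x,y,a,b$ via the formulas of Definition \ref{sqdef}, we have $x_1=x$, $x_2=x+a$, $x_3=x+a-b$, $x_4=x-b$ and $y_1=y$, $y_2=y+b$, $y_3=y+a+b$, $y_4=y+a$. The alternating sum $\int_{\gamma_1}y\,dx-\int_{\gamma_2}y\,dx+\int_{\gamma_3}y\,dx-\int_{\gamma_4}y\,dx$ is then $\int_{t_0}^{t_1}\bigl(y_1\,dx_1-y_2\,dx_2+y_3\,dx_3-y_4\,dx_4\bigr)$, and the goal is to show that the $1$-form $y_1\,dx_1-y_2\,dx_2+y_3\,dx_3-y_4\,dx_4$ on $(x,y,a,b)$-space equals $d\!\left(\tfrac{a^2-b^2}{2}\right)=a\,da-b\,db$, after which the fundamental theorem of calculus gives \eqref{ident}. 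Substituting and expanding, the coefficient of $dx$ is $y_1-y_2+y_3-y_4=y-(y+b)+(y+a+b)-(y+a)=0$; the coefficient of $dy$ is likewise $0$ since each $x_i$ contributes $-y_i\cdot(\partial x_i/\partial y)=0$; the coefficient of $da$ is $-y_2-y_3\cdot 0\ldots$ — more carefully, collecting $-\sum_i y_i\,dx_i$ one finds the $da$-coefficient is $-(y_2\cdot 1 + y_3\cdot 1 - y_4\cdot 0)$ wait, $dx_2=dx+da$, $dx_3=dx+da-db$, $dx_4=dx-db$, so the $da$-coefficient of $\sum \pm y_i\,dx_i$ is $-y_2+y_3=-(y+b)+(y+a+b)=a$, and the $db$-coefficient is $+y_3-(-y_4)$... i.e. $-(-y_3)+(-(-y_4))$; tracking signs, $+y_3(-1)$ from $\gamma_3$ and $-y_4(-1)$ from $\gamma_4$ gives $-y_3+y_4=-(y+a+b)+(y+a)=-b$. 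Hence the form is $a\,da-b\,db=d\bigl(\tfrac{a^2-b^2}{2}\bigr)$, as desired.

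To make this rigorous in the stated generality — the $\gamma_i$ are merely rectifiable, hence $x,y,a,b$ are only of bounded variation and need not be differentiable — I would not work with $1$-forms directly but instead argue via Riemann–Stieltjes sums. One clean route: the linear relations among the $x_i$ and $y_i$ show that $x$, $a$, $b$ are fixed linear combinations of the $x_i$ (namely $a=x_2-x_1$, $b=x_1-x_4$, $x=x_1$) and similarly $y$ is a linear combination of the $y_i$, so $x,y,a,b$ are automatically of bounded variation and the Riemann–Stieltjes integrals $\int y\,da$, $\int a\,da$, etc. all make sense and satisfy the usual bilinearity and integration-by-parts rules for BV functions. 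Then rewrite $\sum_i \pm\int_{t_0}^{t_1} y_i\,dx_i$ by substituting $y_i=y+(\text{lin. in }a,b)$ and $dx_i=dx+(\text{lin. in }da,db)$: the terms involving $y$ or $dx$ cancel by the combinatorial identities just computed, leaving only $\int_{t_0}^{t_1} a\,da - \int_{t_0}^{t_1} b\,db$. Finally, invoke the Riemann–Stieltjes identity $\int_{t_0}^{t_1} a\,da = \tfrac{a(t_1)^2-a(t_0)^2}{2}$, valid whenever $a$ is continuous of bounded variation (it follows, e.g., by symmetrizing the Stieltjes sum $\sum a(s_i^*)(a(s_{i+1})-a(s_i))$ and using continuity to handle the quadratic error term, or from integration by parts $\int a\,da = [a^2] - \int a\,da$), and the same for $b$. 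Assembling these gives exactly \eqref{ident}.

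The main obstacle is purely technical rather than conceptual: one must be careful that Riemann–Stieltjes integrals of the form $\int a\,db$ with both integrand and integrator only of bounded variation (not one of them continuous and the other BV, say) are well-defined. This is handled by the fact that all of $a,b,x,y$ are here \emph{continuous} of bounded variation — continuity of each $\gamma_i$ passes to continuity of the linear combinations $a,b,x,y$ — and for a continuous BV integrator against a BV integrand the Riemann–Stieltjes integral exists. Once this is noted, the only remaining point is the bookkeeping of the eight sign-and-coefficient contributions, which is the elementary computation sketched above and is best presented as a single display verifying that the $dx$- and $dy$-contributions vanish while the $da$- and $db$-contributions collapse to $a\,da-b\,db$. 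I expect the write-up to be short: set up the substitution, cite the BV Riemann–Stieltjes calculus, and compute.
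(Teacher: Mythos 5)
Your proof takes essentially the same approach as the paper: substitute the parametrization from Definition \ref{sqdef}, observe that the alternating sum telescopes to $\int_{t_0}^{t_1} a\,da - \int_{t_0}^{t_1} b\,db$, and evaluate those integrals. One small point worth noting: the paper's version of the final step asserts ``since $a,b$ are Lipschitz continuous'' and uses the estimate $a(s')(a(s')-a(s)) = \tfrac12 a(s')^2 - \tfrac12 a(s)^2 + O(|s-s'|^2)$, but the hypotheses of the lemma only give that $a,b$ are continuous of bounded variation (rectifiability of $\gamma_i$ implies BV of the components, not Lipschitz); your symmetrized-Stieltjes-sum argument for $\int a\,da = \tfrac12(a(t_1)^2 - a(t_0)^2)$ under the weaker continuous-BV hypothesis is in fact exactly what is needed, so your write-up is slightly more careful at this point than the paper's.
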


\begin{proof}  From Definition \ref{sqdef} and Definition \ref{area-def} we have
\begin{align*}
\int_{\gamma_1} y\ dx &= \int_{t_0}^{t_1} y(t)\ dx(t) \\
\int_{\gamma_2} y\ dx &= \int_{t_0}^{t_1} (y(t) + b(t))\ (dx(t) + da(t))\\
\int_{\gamma_3} y\ dx &= \int_{t_0}^{t_1} (y(t) + a(t) + b(t))\ (dx(t) + da(t) - db(t))\\
\int_{\gamma_4} y\ dx &= \int_{t_0}^{t_1} (y(t) + a(t))\ (dx(t) - db(t));
\end{align*}
note from the rectifiability of $\gamma_1, \gamma_2, \gamma_3, \gamma_4$ that $x,y,x+a,y+b$ (and hence $a,b$) are of bounded variation.
After some canceling, we may then write the left-hand side of \eqref{ident} as
$$
\int_{t_0}^{t_1} a(t)\ da(t) - \int_{t_0}^{t_1} b(t)\ db(t).
$$
Since $a,b$ are Lipschitz continuous, one has $a(s') (a(s')-a(s)) = \frac{1}{2} a(s')^2 - \frac{1}{2} a(s)^2 + O( |s-s'|^2 )$ for any $s,s' \in [t_0,t_1]$, which easily implies that
$$
\int_{t_0}^{t_1} a(t)\ da(t) = \frac{1}{2} a^2(t_1) - \frac{1}{2} a^2(t_0);$$ 
similarly we have
$$
\int_{t_0}^{t_1} b(t)\ db(t) = \frac{1}{2} b^2(t_1) - \frac{1}{2} b^2(t_0),$$ 
and the claim follows.
\end{proof}

\begin{remark} Geometrically, this conserved integral reflects the following elementary fact: if a square with vertices $p_1, p_2, p_3, p_4$ (traversed in anticlockwise order) and sidelength $l$ is deformed to a nearby square with vertices $p_1+dp_1$, $p_2+dp_2$, $p_3+dp_3$, $p_4+dp_4$ and sidelength $l+dl$, then the difference of the areas of the two quadrilaterals with vertices $p_1, p_1+dp_1, p_4+dp_4, p_4$ and $p_2,p_2+dp_2, p_3+dp_3, p_3$ respectively add up to exactly half the difference between the areas $l^2$, $(l+dl)^2$ of the two squares (see Figure \ref{diffi}).
\end{remark}

\begin{figure} [t]
\centering
\includegraphics[width=5in]{./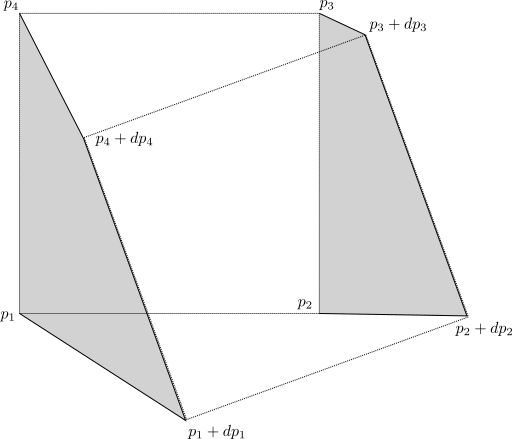}
\caption{The difference in areas between the shaded regions is half the difference in areas between the squares.  Taking ``Riemann sums'' of this fact will yield Lemma \ref{conserv}.}
\label{diffi}
\end{figure}

\begin{remark}\label{homog}  One can interpret Lemma \ref{conserv} in the language of differential forms as follows.  For $i=1,2,3,4$, let $\pi_i \colon \overline{\mathtt{Squares}} \to \R^2$ be the $i^{\operatorname{th}}$ coordinate projection, then we can pull back the $1$-form $y\ dx$ on $\R^2$ by $\pi_i$ to create a $1$-form $\pi_i^*(y\ dx)$ on the $4$-manifold $\overline{\mathtt{Squares}}$.  Then the identity \eqref{ident} may be rewritten as
$$ \pi_1^*(y\ dx) - \pi_2^*(y\ dx) + \pi_3^*(y\ dx) - \pi_4^*(y\ dx) = d\phi$$
where $\phi \colon \overline{\mathtt{Squares}} \to \R$ is the $0$-form that takes a square $((x,y), (x+a, y+b), (x+a-b, y+a+b), (x-b, y+a))$ to the quantity $\frac{a^2-b^2}{2}$, and $d$ denotes the exterior derivative.  In particular, the $1$-form $\pi_1^*(y\ dx) - \pi_2^*(y\ dx) + \pi_3^*(y\ dx) - \pi_4^*(y\ dx)$ is exact.
\end{remark}

Now we prove Theorem \ref{main}. Let $[t_0,t_1], f,g,\eps$ be as in that theorem.
It is convenient to extend the functions $f,g \colon [t_0,t_0] \to \R$ by constants to the whole real line $\R$ to form extended functions $\tilde f, \tilde g \colon \R \to \R$.  That is to say, we define $\tilde f(t)=\tilde g(t)\coloneqq f(t_1)=g(t_1)$ for $t>t_1$, $\tilde f(t)=\tilde g(t)\coloneqq f(t_0)=g(t_0)$ for all $t<t_0$, and $\tilde f(t) \coloneqq f(t)$ and $\tilde g(t) \coloneqq g(t)$ for $t_0 \leq t \leq t_1$.  Clearly $\tilde f,\tilde g \colon \R \to \R$ continue to be $(1-\eps)$-Lipschitz and of bounded variation.

For any $t \in \R$, the map
\begin{equation}\label{map}
 (a,b) \mapsto ( \tilde g(t-b) - \tilde f(t), \tilde f(t+a) - \tilde f(t) )
\end{equation}
is a strict contraction on $\R^2$ (with the usual Euclidean metric) with contraction constant at most $1-\eps$.  Hence, by the contraction mapping theorem (or Banach fixed point theorem) applied to the complete metric space $\R^2$, there is a unique solution $(a(t),b(t)) \in \R^2$ to the fixed point equation
\begin{equation}\label{fix}
 (a(t),b(t)) = ( \tilde g(t-b(t)) - \tilde f(t), \tilde f(t+a(t)) - \tilde f(t));
\end{equation}
furthermore, $a(t)$ and $b(t)$ depend in a Lipschitz fashion on $t$ (the Lipschitz constant can be as large as $O(1/\eps)$, but this will not concern us).  If we then define\footnote{Similar curves also appear in the arguments of Jerrard \cite{jerrard}.} the functions $\gamma_1,\gamma_2,\gamma_3, \gamma_4 \colon [t_0,t_1] \to \R^2$ by
\begin{align}
\gamma_1(t) &= (t, \tilde f(t)) \label{g1-def}\\
\gamma_2(t) &= (t+a(t), \tilde f(t) + b(t)) \label{g2-def}\\
\gamma_3(t) &= (t+a(t)-b(t), \tilde f(t) + a(t) + b(t)) \label{g3-def} \\
\gamma_4(t) &= (t-b(t), \tilde f(t) + a(t))\label{g4-def}
\end{align}
for all $t \in \R$, then $(\gamma_1,\gamma_2,\gamma_3,\gamma_4)$ is a quadruple of Lipschitz (and thus locally rectifiable) curves that traverse squares.  From \eqref{fix}, \eqref{g1-def}, \eqref{g2-def}, \eqref{g4-def} we have
\begin{align}
\gamma_1(t) &= \mathtt{Graph}_{\tilde f}(t) \label{g1-alt} \\
\gamma_2(t) &= \mathtt{Graph}_{\tilde f}(t+a(t)) \label{g2-alt}\\
\gamma_4(t) &= \mathtt{Graph}_{\tilde g}(t-b(t))\label{g4-alt}
\end{align}
for all $t \in \R$.  In particular, $\gamma_1, \gamma_2, \gamma_4$ take values in $\mathtt{Graph}_{\tilde f}(\R)$, $\mathtt{Graph}_{\tilde f}(\R)$, and $\mathtt{Graph}_{\tilde g}(\R)$ respectively; see Figure \ref{gammafig}.  As for $\gamma_3$, we can use the hypothesis of small Lipschitz constant to establish the following key fact:

\begin{figure} [t]
\centering
\includegraphics[width=5in]{./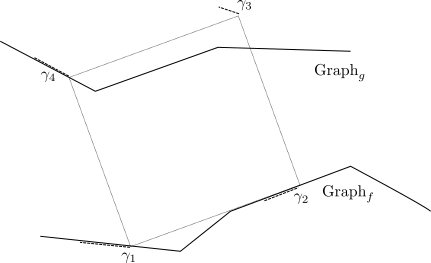}
\caption{Portions of the curves $\gamma_1, \gamma_2, \gamma_3, \gamma_4$, $\mathtt{Graph}_{f}$, and $\mathtt{Graph}_{g}$.  (In some cases, $\gamma_1,\gamma_2,\gamma_4$ may lie on the enlargements $\mathtt{Graph}_{\tilde f}, \mathtt{Graph}_{\tilde g}$ of $\mathtt{Graph}_f$, $\mathtt{Graph}_g$, which are not shown here.)}
\label{gammafig}
\end{figure}

\begin{proposition}\label{pro}  The curve $\gamma_3 \colon [t_0,t_1] \to \R^2$ is simple.
\end{proposition}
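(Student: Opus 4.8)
The plan is to rule out self‑intersections of $\gamma_3$ by showing that a putative self‑intersection would force a pair of rigid identities that the contraction constant $1-\eps$ cannot tolerate. It is cleanest to change variables: set $u(t) \coloneqq t + a(t)$ and $v(t) \coloneqq t - b(t)$, which are the first coordinates of $\gamma_2(t)$ and $\gamma_4(t)$ (see \eqref{g2-def}, \eqref{g4-def}). Rewriting \eqref{g3-def} in terms of $u,v$ gives
$$ \gamma_3(t) = \bigl( u(t) + v(t) - t,\ \tilde f(t) + u(t) - v(t) \bigr), $$
while the fixed point equation \eqref{fix} becomes the system
$$ u(t) = t - \tilde f(t) + \tilde g(v(t)), \qquad v(t) = t + \tilde f(t) - \tilde f(u(t)). $$
The argument then proceeds by contradiction: assume $\gamma_3(s) = \gamma_3(t)$ for some $t_0 \le s < t \le t_1$, and write $\delta \coloneqq t - s > 0$.

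First I would extract information from the equality $\gamma_3(s) = \gamma_3(t)$ alone. Equating the two coordinates of $\gamma_3$ at $s$ and at $t$ and then forming the sum and the difference of the two scalar equations (which eliminates $v(t)-v(s)$, respectively $u(t)-u(s)$) yields the clean identities
$$ 2\bigl(u(t)-u(s)\bigr) = \delta - \bigl(\tilde f(t)-\tilde f(s)\bigr), \qquad 2\bigl(v(t)-v(s)\bigr) = \delta + \bigl(\tilde f(t)-\tilde f(s)\bigr). $$
Since $\tilde f$ is $(1-\eps)$-Lipschitz, both right-hand sides lie in $[\eps\delta,(2-\eps)\delta]$; in particular $u(t)-u(s) > 0$ and $v(t)-v(s) > 0$, and adding the two identities gives $\bigl(u(t)-u(s)\bigr) + \bigl(v(t)-v(s)\bigr) = \delta$.

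The decisive step is to feed this back into the fixed point system. Differencing the $u$-equation between $s$ and $t$ gives $u(t)-u(s) = \bigl(\delta - (\tilde f(t)-\tilde f(s))\bigr) + \bigl(\tilde g(v(t))-\tilde g(v(s))\bigr)$; but the first identity above says $u(t)-u(s)$ is exactly \emph{half} of $\delta - (\tilde f(t)-\tilde f(s))$, so necessarily $\tilde g(v(t))-\tilde g(v(s)) = -\bigl(u(t)-u(s)\bigr)$. The Lipschitz bound on $\tilde g$, together with positivity, then forces $u(t)-u(s) \le (1-\eps)\bigl(v(t)-v(s)\bigr)$. The symmetric manipulation of the $v$-equation, using the Lipschitz bound on $\tilde f$, gives $v(t)-v(s) \le (1-\eps)\bigl(u(t)-u(s)\bigr)$. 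Chaining the two inequalities yields $u(t)-u(s) \le (1-\eps)^2\bigl(u(t)-u(s)\bigr)$ with $u(t)-u(s)>0$, hence $(1-\eps)^2 \ge 1$, which is impossible for $0<\eps<1$. Thus $\gamma_3$ is injective, i.e.\ simple.

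I expect the main difficulty to lie in choosing the right strategy rather than in executing it. The instinctive approach is to prove directly that the first coordinate $t \mapsto t+a(t)-b(t)$ of $\gamma_3$ is strictly monotone, so that $\gamma_3$ is literally a graph; but the only bounds one gets on the increments $u(t)-u(s)$, $v(t)-v(s)$ (equivalently on $u'$, $v'$) from the contraction mapping are too lossy to yield monotonicity unless $\eps$ is bounded away from $0$ (one gets roughly $\eps>2/3$). The argument above sidesteps monotonicity entirely: a hypothetical self-intersection already pins $u(t)-u(s)$ and $v(t)-v(s)$ down to exactly one half of the respective ``graph contributions'' $\delta \mp (\tilde f(t)-\tilde f(s))$, and it is precisely this rigid factor of $\tfrac12$ that is incompatible with a contraction of ratio strictly less than one.
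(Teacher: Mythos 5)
Your proof is correct, and I checked every step: the change of variables $u = t+a$, $v = t-b$ cleanly reduces the fixed point system to $u(t) = t - \tilde f(t) + \tilde g(v(t))$, $v(t) = t + \tilde f(t) - \tilde f(u(t))$; the sum/difference manipulation of $\gamma_3(s)=\gamma_3(t)$ gives the rigid identities $2(u(t)-u(s)) = \delta - (\tilde f(t)-\tilde f(s))$ and $2(v(t)-v(s)) = \delta + (\tilde f(t)-\tilde f(s))$, both in $[\eps\delta, (2-\eps)\delta]$; differencing the fixed point equations then forces $\tilde g(v(t))-\tilde g(v(s)) = -(u(t)-u(s))$ and $\tilde f(u(t))-\tilde f(u(s)) = v(t)-v(s)$, and the Lipschitz bounds chain to $(1-\eps)^2 \geq 1$, a contradiction.

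The route is genuinely different in presentation from the paper's, which is a short geometric argument: a self-intersection $\gamma_3(t)=\gamma_3(t')$ implies that rotating $\mathtt{Graph}_{\tilde f}$ clockwise by $\pi/2$ about that common point carries the two points $\gamma_2(t),\gamma_2(t')$ of $\mathtt{Graph}_{\tilde f}$ onto $\gamma_4(t),\gamma_4(t')$ of $\mathtt{Graph}_{\tilde g}$, and the chord joining them would then have slope $\leq 1-\eps$ with respect to \emph{both} coordinate axes, which is impossible. Your algebraic identities are precisely the coordinate form of that rotation: writing $\gamma_2(t)-\gamma_2(s) = (u(t)-u(s),\ \tilde f(u(t))-\tilde f(u(s))) =: (p,q)$, your two derived equalities say exactly that $\gamma_4(t)-\gamma_4(s) = (q,-p)$, i.e.\ the chord on $\mathtt{Graph}_{\tilde g}$ is the $90^\circ$ clockwise rotation of the chord on $\mathtt{Graph}_{\tilde f}$, and the chained Lipschitz inequalities are the slope contradiction. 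What your version buys is explicitness: the positivity $u(t)-u(s) \geq \eps\delta/2 > 0$ (and likewise for $v$) is proved outright rather than implicitly assumed, and the whole argument is a self-contained computation from \eqref{fix} without appeal to the geometry of squares. The paper's version is shorter and more visual but leaves some bookkeeping (distinctness of the two intersection points, the precise identification of which points the rotated curve passes through) to the reader. Your closing remark that directly proving monotonicity of $t\mapsto t+a(t)-b(t)$ fails for small $\eps$ is also a sound observation and explains why both the paper and your argument must exploit the rigidity of an \emph{exact} self-intersection rather than a pointwise derivative bound.
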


\begin{proof}  Suppose for contradiction that $t,t' \in [t_0,t_1]$ are distinct points such that $\gamma_3(t) = \gamma_3(t')$.  Then if one rotates the curve $\mathtt{Graph}_{\tilde f}$ clockwise by $\pi/2$ around $\gamma_3(t) = \gamma_3(t')$, the rotated curve will intersect $\mathtt{Graph}_{\tilde g}$ at the two distinct points $\gamma_2(t)$ and $\gamma_2(t')$ (see Figure \ref{gammafig}).  As $\tilde g$ is $1-\eps$-Lipschitz, we conclude that the line segment connecting these two points has slope of magnitude at most $1-\eps$ with respect to the $x$-axis; as $\tilde f$ is also $1-\eps$-Lipschitz, we similarly conclude that the same line segment has slope of magnitude at most $1-\eps$ with respect to the $y$-axis.  The two claims are inconsistent, giving the required contradiction.
\end{proof}

When $t=t_0$ or $t=t_1$, we have $(a(t),b(t)) = (0,0)$ as the unique fixed point of \eqref{fix}.  Applying Lemma \ref{conserv}, we conclude the identity
\begin{equation}\label{conc}
 \int_{\gamma_1} y\ dx - \int_{\gamma_2} y\ dx + \int_{\gamma_3} y\ dx - \int_{\gamma_4} y\ dx = 0.
\end{equation}
By \eqref{g1-alt} and Example \ref{integ} we have
\begin{equation}\label{g1a}
\int_{\gamma_1} y\ dx  = \int_{t_0}^{t_1} f(t)\ dt.
\end{equation}
From \eqref{g2-alt} and a change of variables\footnote{This change of variables is easy to justify if the map $t \mapsto t + a(t)$ is piecewise linear, and the general case follows by an approximation argument (noting that all functions involved are Lipschitz continuous).} $\tilde t \coloneqq t + a(t)$ we also have
\begin{equation}\label{g2a}
\int_{\gamma_2} y\ dx  = \int_{t_0}^{t_1} f(t)\ dt;
\end{equation}
note that $t+a(t)$ may be temporarily decreasing instead of increasing as $t$ increases from $t_0$ to $t_1$, but the net contributions of such excursions cancel out (by the fundamental theorem of calculus, or equivalently because $1$-forms on a line are automatically exact).  Similarly, from \eqref{g4-alt} and the change of variables $\tilde t \coloneqq t - b(t)$ we have
\begin{equation}\label{g4a}
\int_{\gamma_4} y\ dx = \int_{t_0}^{t_1} g(t)\ dt.
\end{equation}
Thus from \eqref{conc} we must have
$$\int_{\gamma_3} y\ dx = \int_{t_0}^{t_1} g(t)\ dt$$
or equivalently
$$\int_{\gamma_3 + (-\mathtt{Graph}_g)} y\ dx = 0$$
where $\gamma_3 + (-\mathtt{Graph}_g)$ denotes the concatenation of $\gamma_3$ with the reversal of the graph $\mathtt{Graph}_g$.  This is a closed curve, hence by Lemma \ref{repo} this curve cannot be simple.  Since $\gamma_3$ and $\mathtt{Graph}_g$ are separately simple (the former thanks to Proposition \ref{pro}), we conclude that there exists $t_0 < t, t' < t_1$ such that
$$ \gamma_3(t) = \mathtt{Graph}_g(t').$$
In particular, $\gamma_1(t), \gamma_2(t), \gamma_3(t), \gamma_4(t)$ all lie in the set $\mathtt{Graph}_{\tilde f} \cup \mathtt{Graph}_{\tilde g}$.  Since $\tilde g(t) > \tilde f(t)$, we see from \eqref{fix} that $a(t)$ and $b(t)$ cannot both vanish; thus $(\gamma_1(t), \gamma_2(t), \gamma_3(t), \gamma_4(t))$ lie in $\mathtt{Squares}$.  Now we claim that all four vertices of this square in fact lie in the set \eqref{set}.  Indeed, suppose for contradiction that one of the vertices, call it $v$, was outside of \eqref{set}, then it lies on the ray $\{ (t, f(t_0)): t < t_0\}$ or on the ray $\{ (t, f(t_1)): t > t_1 \}$.  But in either case, the set $\mathtt{Graph}_{\tilde f} \cup \mathtt{Graph}_{\tilde g} \backslash \{v\}$ is contained in the open double sector $v + \{ (x,y): |y| < |x| \}$, and hence $\mathtt{Graph}_{\tilde f} \cup \mathtt{Graph}_{\tilde g}$ cannot inscribe any square containing $v$ as a vertex (as one cannot subtend a right angle at $v$).  This implies that the set \eqref{set} inscribes a square as required, and Theorem \ref{main} follows.

\begin{remark} It is instructive to compare the above argument with the following homological argument, which requires additional regularity hypotheses on $f,g$ at the boundary points $t_0,t_1$.  Namely, suppose in addition to the hypotheses of Theorem \ref{main} that $f,g$ are differentiable at $t_0,t_1$ with $g'(t_0) > f'(t_0)$ and $g'(t_1) < f'(t_1)$; this corresponds the curve \eqref{set} being ``locally monotone'' in the sense of \cite{strom} or \cite{vz}, even at the endpoints $\mathtt{Graph}_f(t_0) = \mathtt{Graph}_g(t_0)$ and $\mathtt{Graph}_f(t_1) = \mathtt{Graph}_g(t_1)$.  A local analysis then reveals that the curve $t \mapsto \gamma_3(t)$ defined above lies in the interior of \eqref{set} for $t$ close to $t_0$, and in the exterior of \eqref{set} for $t$ close to $t_1$, and so it must cross \eqref{set} at some point; indeed, if all intersections were transversal, then it must cross this curve an odd number of times.  (Actually, it is not difficult to use the Lipschitz hypotheses to show that this curve can only cross $\mathtt{Graph}_g$ and not $\mathtt{Graph}_f$.)  In contrast, the integral argument based on the conserved integral \eqref{ident} does not give any information on the parity of crossings (indeed, if $f,g$ are not differentiable at the endpoints, one could conceivably have an infinite number of transverse crossings near the endpoints $\mathtt{Graph}_f(t_0) = \mathtt{Graph}_g(t_0)$ and $\mathtt{Graph}_f(t_1) = \mathtt{Graph}_g(t_1)$), but do not require the functions $f,g$ to be so regular at the endpoints $t_0,t_1$ that a local analysis is possible there.
\end{remark}

\begin{remark}\label{bm}  The following observations are due to Benjamin Matschke (private communcation).  The above arguments can be generalised to show that for any fixed $s,r > 0$, and with $f,g$ as in Theorem \ref{main} but with the Lipschitz constant $1-\eps$ replaced by $\tan(\alpha/2)-\eps$ with $\alpha \coloneqq \arctan(r/s) \in (0,\pi]$, the set $\mathtt{Graph}_f \cup \mathtt{Graph}_s$ inscribes a quadruple similar to the equilateral trapezoid
$$ (0,0), (1, 0), (s+1, r), (-s, r)$$
or equivalently a quadruple of the form
$$ (x,y), (x+a,y+b), (x+(s+1)a-rb, y+(s+1)b+ra), (x+(-s)a-rb, y+(-s)b+ra).$$
Theorem \ref{main} corresponds to the endpoint case $s=0,r=1$ of this more general claim.  Indeed, by repeating the above arguments one can find Lipschitz curves $\gamma_1,\gamma_2,\gamma_3,\gamma_4 \colon [t_0,t_1] \to \R^2$ of the form
\begin{align*}
\gamma_1(t) &= (t, \tilde f(t)) \\
\gamma_2(t) &= (t + a(t), \tilde f(t) + b(t)) \\
\gamma_3(t) &= (t + (s+1)a(t) - rb(t), y+(s+1)b(t) + ra(t)) \\
\gamma_4(t) &= (t + (-s) a(t) - rb(t), y+(-s)b(t) + ra(t)) 
\end{align*}
for some Lipschitz functions $a,b,\tilde f \colon [t_0,t_1] \to \R$ obeying \eqref{g1-alt}, \eqref{g2-alt}, \eqref{g4-alt}, then one can again verify that $\gamma_3$ is simple, and a variant of the calculation used to prove Lemma \ref{conserv} establishes the identity
$$ (2s+1) \int_{\gamma_1} y\ dx - (2s+1) \int_{\gamma_2} y\ dx + \int_{\gamma_3} y\ dx - \int_{\gamma_4} y\ dx = \frac{r(2s+1)}{2} \left( (a(t_1)^2 - b(t_1)^2) - (a(t_0)^2 - b(t_0)^2) \right)$$
and one can then conclude the claim by repeating the remaining arguments of this section; we leave the details to the interested reader.  On the other hand, when the equilateral trapezoid is not a rectangle or square, the known homological arguments do not seem to force the existence of an inscribed copy of the trapezoid even when the curve is smooth, because there are no symmetry reductions available to make the number of inscribed copies odd rather than even.  
\end{remark}

\section{Periodic variants of the square peg problem}

We now discuss periodic versions of the square peg problem, in which the plane $\R^2$ is replaced by the cylinder
$$ \mathtt{Cyl}_L \coloneqq (\R/L\Z) \times \R$$
for some fixed period\footnote{One could easily normalise $L$ to be $1$ if desired, but we will find it convenient to allow $L$ to be a parameter at our disposal.} $L>0$.  There is an obvious projection map $\pi_L \colon \R^2 \to \mathtt{Cyl}_L$ from the plane to the cylinder, which induces a projection $\pi_L^{\oplus 4} \colon (\R^2)^4 \to \mathtt{Cyl}_L^4$; we let $\mathtt{Squares}_L$ and $\overline{\mathtt{Squares}_L}$ be the images of $\mathtt{Squares}$ and $\overline{\mathtt{Squares}}$ under this latter projection.  More explicitly, we have
\begin{align*}
 \mathtt{Squares}_L &\coloneqq \{ ((x,y), (x+a, y+b), (x+a-b, y+a+b), (x-b, y+a)): \\
&\quad x \in \R/L\Z; y,a,b \in \R; (a,b) \neq (0,0) \}
\end{align*}
and
$$ \overline{\mathtt{Squares}_L} \coloneqq \{ ((x,y), (x+a, y+b), (x+a-b, y+a+b), (x-b, y+a)): x \in \R/L\Z; y,a,b \in \R \},$$
where we define the sum $x+a$ of an element $x \in \R/L\Z$ and a real number $a \in \R$ in the obvious fashion.
Again note that $\overline{\mathtt{Squares}_L}$ is an oriented $4$-manifold in $\mathtt{Cyl}_L^4$.  As before, a subset $\Gamma$ of $\mathtt{Cyl}_L$ is said to \emph{inscribe a square} if $\Gamma^4$ intersects $\mathtt{Squares}_L$.  We give $\mathtt{Cyl}_L$ and $\mathtt{Cyl}_L^4$ the usual flat Riemannian metric, which is then inherited by $\mathtt{Squares}_L$ and $\overline{\mathtt{Squares}_L}$.

We have a standard closed curve $\mathtt{Graph}_{0,L} \colon \R/L\Z \to \mathtt{Cyl}_L$ in $\mathtt{Cyl}_L$ defined by
$$ \mathtt{Graph}_{0,L}(t) \coloneqq (t,0);$$
one can think of $\mathtt{Graph}_{0,L}$ homologically as a $1$-cycle generating the first homology of $\mathtt{Cyl}_L$.
Any other closed curve $\gamma \colon \R/L\Z \to \mathtt{Cyl}_L$ will be homologous to $\mathtt{Graph}_{0,L}$ if and only if it takes the form
$$ \gamma(t \ \operatorname{mod}\ L) = \pi_L( \tilde \gamma(t) )$$
for some continuous lift $\tilde \gamma \colon \R \to \R^2$ of $\gamma$ that is \emph{$L\Z$-equivariant} in the sense that
\begin{equation}\label{L-equiv}
\tilde \gamma(t+L) = \tilde \gamma(t) + (L,0)
\end{equation}
for all $t \in \R$.

Amongst all the curves $\gamma$ in $\mathtt{Cyl}_L$, we isolate the \emph{polygonal curves}, in which $\gamma$ is piecewise linear (possibly after reparameterisation), that is to say $\gamma$ is the concatenation of finitely many line segments in $\mathtt{Cyl}_L$.

We now introduce the following variant of Conjecture \ref{spp}.

\begin{conjecture}[Periodic square peg problem]\label{pspp}  Let $L > 0$, and let $\sigma_1, \sigma_2 \colon \R/L\Z \to \mathtt{Cyl}_L$ be simple curves in $\mathtt{Cyl}_L$ homologous to $\mathtt{Graph}_{0,L}$.  Suppose also that the sets $\sigma_1(\R/L\Z)$ and $\sigma_2(\R/L\Z)$ are disjoint.  Then $\sigma_1(\R/L\Z) \cup \sigma_2(\R/L\Z)$ inscribes a square.
\end{conjecture}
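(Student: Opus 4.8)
The plan is to follow the architecture of the proof of Theorem~\ref{main} as closely as possible, working with rectifiable (in particular polygonal) curves so that the integrals below make sense; the polygonal case already carries the essential difficulty. First I would assemble the two cylinder analogues of the tools from Section~\ref{pos-sec}. The periodic form of Lemma~\ref{conserv}: if $\gamma_1,\gamma_2,\gamma_3,\gamma_4\colon\R/M\Z\to\mathtt{Cyl}_L$ traverse squares, then lifting to $\R^2$ over a fundamental interval and applying Lemma~\ref{conserv} gives
$$\int_{\gamma_1}y\ dx-\int_{\gamma_2}y\ dx+\int_{\gamma_3}y\ dx-\int_{\gamma_4}y\ dx=0,$$
because the shape parameters $a,b$ are genuinely $M$-periodic real-valued functions (only the base point $x$ lives in $\R/L\Z$), so the boundary term in~\eqref{ident} cancels. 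The periodic form of Lemma~\ref{repo}: a simple closed rectifiable curve $\gamma$ in $\mathtt{Cyl}_L$ either is null-homotopic, bounds a disc, and has $\int_\gamma y\ dx=\pm|\mathrm{disc}|\neq0$ unless it is a point; or is homologous to $\mathtt{Graph}_{0,L}$, in which case any disjoint curve $\sigma$ of the same homology class cobounds with $\gamma$ an annular region $A$ with $\int_\gamma y\ dx-\int_\sigma y\ dx=\pm|A|$, which is nonzero unless $\gamma=\sigma$. Both are proved like Lemma~\ref{repo}, via Stokes' theorem and polygonal approximation.

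Next I would construct the family of squares topologically, in place of the contraction mapping of Section~\ref{pos-sec}. Choose $L\Z$-equivariant lifts $\tilde\sigma_1,\tilde\sigma_2\colon\R\to\R^2$ and define $\tilde F\colon\R^2\to\R^2$ by $\tilde F(t_1,t_2)\coloneqq\tilde\sigma_1(t_1)+\Rot\bigl(\tilde\sigma_1(t_2)-\tilde\sigma_1(t_1)\bigr)$, with $\Rot$ rotation by $\pi/2$; thus $\tilde F(t_1,t_2)$ is the fourth vertex of the anticlockwise square whose first two vertices are $\tilde\sigma_1(t_1)$ and $\tilde\sigma_1(t_2)$. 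One checks $\tilde F(t_1+L,t_2+L)=\tilde F(t_1,t_2)+(L,0)$, so $\tilde F$ descends to a map $\bar F\colon\mathcal T\to\mathtt{Cyl}_L$ from the cylinder $\mathcal T\coloneqq\R^2/\langle(L,L)\rangle$; on the diagonal circle one has $\tilde F(s,s)=\tilde\sigma_1(s)$, so $\bar F$ has degree one, and since $\tilde\sigma_1(s)=(s+O(1),O(1))$ the second coordinate of $\bar F(t_1,t_2)$ equals $-(t_1-t_2)+O(1)$, so $\bar F$ is proper and carries the two ends $t_1-t_2\to\pm\infty$ of $\mathcal T$ to the two ends $y\to\mp\infty$ of $\mathtt{Cyl}_L$. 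Because $\sigma_2$ separates $\mathtt{Cyl}_L$, the set $\bar F^{-1}(\sigma_2)$ separates the two ends of $\mathcal T$ (in the polygonal case it is a finite graph, and by Poincar\'e duality it represents a generator of $H_1(\mathcal T)\cong\Z$), hence contains a loop $C$ winding once around $\mathcal T$. Parametrising $C$ as $t\mapsto(t_1(t),t_2(t))$, $t\in\R/M\Z$, with $t_1(t+M)=t_1(t)+L$ and $t_2(t+M)=t_2(t)+L$, and setting $\gamma_1(t)\coloneqq\pi_L(\tilde\sigma_1(t_1(t)))$, $\gamma_2(t)\coloneqq\pi_L(\tilde\sigma_1(t_2(t)))$, $\gamma_4(t)\coloneqq\bar F(t_1(t),t_2(t))$ and $\gamma_3$ the remaining vertex yields a periodic family of squares in which $\gamma_1,\gamma_2$ each traverse $\sigma_1$ with winding number one and $\gamma_4$ takes values on $\sigma_2$.

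Given this family, I would finish exactly as in Section~\ref{pos-sec}. Since $\gamma_1,\gamma_2$ each traverse $\sigma_1$ with degree one, reparametrisation invariance of $\int y\ dx$ gives $\int_{\gamma_1}y\ dx=\int_{\gamma_2}y\ dx=\int_{\sigma_1}y\ dx$, and likewise $\int_{\gamma_4}y\ dx=\int_{\sigma_2}y\ dx$; the conservation law then forces $\int_{\gamma_3}y\ dx-\int_{\sigma_2}y\ dx=0$. Now $\gamma_3$ has winding number one (its shape parameters do not wind), so $\gamma_3-\sigma_2$ is a null-homologous $1$-cycle; if $\gamma_3$ were simple it would have to meet $\sigma_2$, since otherwise $\gamma_3$ and $\sigma_2$ would be disjoint simple closed curves of class $[\mathtt{Graph}_{0,L}]$ cobounding an annulus of area $|\int_{\gamma_3}y\ dx-\int_{\sigma_2}y\ dx|=0$, forcing $\gamma_3=\sigma_2$, a contradiction. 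At a parameter $t$ with $\gamma_3(t)\in\sigma_2(\R/L\Z)$, all four vertices $\gamma_1(t),\gamma_2(t)\in\sigma_1(\R/L\Z)$ and $\gamma_3(t),\gamma_4(t)\in\sigma_2(\R/L\Z)$ lie on $\sigma_1(\R/L\Z)\cup\sigma_2(\R/L\Z)$, and the square is non-degenerate, because a degenerate square would place $\gamma_1(t)=\gamma_3(t)$ in $\sigma_1(\R/L\Z)\cap\sigma_2(\R/L\Z)=\emptyset$; so $\sigma_1(\R/L\Z)\cup\sigma_2(\R/L\Z)$ inscribes a square.

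The one missing ingredient --- and where I expect to be stuck --- is the hypothesis ``$\gamma_3$ is simple''. In Theorem~\ref{main} this was Proposition~\ref{pro}, whose proof used the strict bound on the Lipschitz constants in an essential way; here $\sigma_1,\sigma_2$ are arbitrary simple curves and nothing prevents the free vertex $\gamma_3$ from running along a self-intersecting loop. Without simplicity, the identity $\int_{\gamma_3}y\ dx-\int_{\sigma_2}y\ dx=0$ no longer forces $\gamma_3$ onto $\sigma_2$, since the signed area it measures can cancel against itself; this is precisely the phenomenon isolated by the area-inequality and combinatorial reformulations (Conjectures~\ref{area-ineq} and~\ref{adf}), and overcoming it seems to require genuinely new input. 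I would therefore expect this plan to settle Conjecture~\ref{pspp} only under an extra condition --- a small-slope or local-monotonicity hypothesis in the spirit of Theorem~\ref{main}, possibly after permuting which vertex plays the free role --- guaranteeing that the free vertex is embedded, and I do not see how to remove such a condition in general.
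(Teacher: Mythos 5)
This statement is posed in the paper as an open conjecture, not a theorem, and the paper explicitly records that it does not know its truth even when $\sigma_1,\sigma_2$ are smooth or polygonal; Sections~4--6 consist of reformulations (Conjectures~\ref{qpspp}, \ref{area-ineq}, \ref{area-1d}, \ref{sai}, \ref{adf}) and partial results rather than a proof. So there is no argument in the paper to compare yours against, and your closing self-assessment is accurate: nothing in your reasoning is wrong, but it cannot be a complete proof because it leaves open precisely the problem the paper itself identifies.

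Modulo presentation, your plan reconstructs the paper's own framework from the end of Section~\ref{quad}. Your cylinder $\mathcal{T}$, the proper degree-one map $\bar F$, and the winding-one loop $C\subset\bar F^{-1}(\sigma_2)$ are the same as intersecting $\sigma_1\times\sigma_1\times\mathtt{Cyl}_L\times\sigma_2$ with $\overline{\mathtt{Squares}_L}$ and extracting the primitive periodic component $\sigma^0_{124}$; your periodic conservation law is Lemma~\ref{conserv} with vanishing boundary term (the same observation drives Theorem~\ref{ook}); and your reduction to $\int_{\gamma_3}y\,dx=\int_{\sigma_2}y\,dx$ is exactly the identity \eqref{dill-2}. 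The obstruction you name --- that without simplicity of the free vertex curve $\gamma_3$, vanishing of the signed area between $\gamma_3$ and $\sigma_2$ need not force an intersection, since positive and negative lobes can cancel --- is exactly the phenomenon the paper packages as Conjecture~\ref{area-ineq} and its combinatorial avatar Conjecture~\ref{adf}. The only cases the paper settles (Theorem~\ref{ook}, Theorem~\ref{koo}, Proposition~\ref{tap}) each add precisely the sort of extra hypothesis you propose: a Lipschitz constant below $1$, or $k_3=1$, or a sign condition on the winding number $W^0_{12}$, each of which serves as a surrogate for $\gamma_3$ being embedded. Your ``gap'' is thus not a flaw in your argument but the open heart of the problem; overcoming it would already prove Conjectures~\ref{qpspp} and \ref{pspp}, which the paper expects to require genuinely new input.
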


In contrast to Conjecture \ref{spp}, we do not know the answer to Conjecture \ref{pspp} even when $\sigma_1, \sigma_2$ are smooth or piecewise polygonal (and we in fact suspect that resolving this case would soon resolve Conjecture \ref{pspp} in full generality, in analogy to Corollary \ref{poly} below).  This is because the intersection numbers produced by homological arguments become \emph{even} in the periodic setting, rather than \emph{odd}.  Of course, by rescaling we could normalise $L=1$ without loss of generality in Conjecture \ref{pspp} if desired, but we find it preferable to not enforce this normalisation.

We are able to relate Conjecture \ref{spp} to a special case of Conjecture \ref{pspp}.  To state this special case, we need a further definition:

\begin{definition}[Infinitesimally inscribed squares]  Let $L > 0$.  A closed subset $\Gamma$ of $\mathtt{Cyl}_L$ is said to \emph{inscribe infinitesimal squares} if there exists a sequence of squares
\begin{equation}\label{vert}
 S_n = ((x_n,y_n), (x_n+a_n, y_n+b_n), (x_n+a_n-b_n, y_n+a_n+b_n), (x_n-b_n, y_n+a_n)) 
\end{equation}
in $\mathtt{Squares}_L$ converging to a degenerate square $((x,y), (x,y), (x,y), (x,y))$ for some $(x,y) \in \Gamma$, such that
$$ \mathtt{dist}(S_n, \Gamma^4) = o(|a_n|+|b_n|)$$
as $n \to \infty$.
\end{definition}

Note that the property of infinitesimally inscribing squares is a purely local property: to show that a set $\Gamma$ does not infinitesimally inscribe squares, it suffices to show that for every $p \in \Gamma$, there is a set $\Gamma_p$ that agrees with $\Gamma$ in a neighbourhood of $p$ that does not infinitesimally inscribe squares.

We now give two examples of sets with the property of not inscribing infinitesimal squares.

\begin{lemma}\label{inscribe}  Let $f_1,\dots,f_k \colon \R/L\Z \to \R$ be $C$-Lipschitz functions for some $C < \tan \frac{3\pi}{8} = 1+\sqrt{2}$, such that the graphs $\mathtt{Graph}_{f_i}(\R/L\Z)$ for $i=1,\dots,k$ are all disjoint.  Then the set $\bigcup_{i=1}^k \mathtt{Graph}_{f_i}(\R/L\Z)$ does not inscribe infinitesimal squares.
\end{lemma}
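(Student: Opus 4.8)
The plan is to argue by contradiction: suppose $\Gamma = \bigcup_{i=1}^k \mathtt{Graph}_{f_i}(\R/L\Z)$ inscribes infinitesimal squares, so that there is a sequence of nondegenerate squares $S_n$ as in \eqref{vert} with $(a_n,b_n) \to (0,0)$, all four vertices converging to a common point $(x_*,y_*) \in \Gamma$, and $\mathtt{dist}(S_n,\Gamma^4) = o(|a_n|+|b_n|)$. First I would exploit the fact (already noted in the paper) that the property is purely local: near the limit point $(x_*,y_*)$, the set $\Gamma$ is a union of finitely many graphs, each of which — after passing to a small enough neighbourhood and using that the $C$-Lipschitz functions pass through points near $(x_*,y_*)$ — is contained in a narrow double cone $\{(x,y) : |y - y_*| \le C'|x - x_*| + o(1)\}$ about the horizontal through the limit point, for any $C'$ with $C < C' < 1+\sqrt2$. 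So for $n$ large, each of the four vertices of $S_n$ lies within $o(|a_n|+|b_n|)$ of a point of $\Gamma$, and that point in turn makes an angle with the horizontal (measured from $(x_*,y_*)$, or more carefully from a common nearby base point) of magnitude at most $\arctan C' < 3\pi/8$.

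Next I would rescale. Write $l_n = \sqrt{a_n^2+b_n^2}$ for the (vanishing) sidelength scale, and consider the rescaled squares $\tilde S_n = l_n^{-1}(S_n - (x_*,y_*)\cdot\mathbf 1)$; these are squares of sidelength $1$ (up to the bounded rotation given by the angle $\arg(a_n+ib_n)$), and along a subsequence the direction $(a_n,b_n)/l_n$ converges to some unit vector, so $\tilde S_n$ converges to a genuine unit square $\tilde S_\infty$ with vertices $\tilde p_1,\tilde p_2,\tilde p_3,\tilde p_4$ traversed anticlockwise. The hypothesis $\mathtt{dist}(S_n,\Gamma^4) = o(l_n)$ says precisely that after this rescaling each vertex $\tilde p_j$ is approached by rescaled points of $\Gamma$; since those rescaled points of $\Gamma$ lie in the double cone $\{|y| \le C'|x|\}$ (the rescaling sends the narrow cone about $(x_*,y_*)$ to exactly this cone, the $o(1)$ error washing out), we conclude that all four vertices $\tilde p_1,\dots,\tilde p_4$ of the limiting unit square lie in the closed double cone $\mathcal C \coloneqq \{(x,y): |y| \le C' |x|\}$, where $C' < 1+\sqrt2 = \tan\frac{3\pi}{8}$, i.e.\ the cone has half-angle strictly less than $3\pi/8$.

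Then it remains the purely Euclidean claim: a unit square in $\R^2$ cannot have all four of its vertices in a double cone $\mathcal C$ of half-angle $< 3\pi/8$ about the origin (equivalently, within $3\pi/8$ of the $x$-axis in either direction). This I would prove by a direct angle chase: label the vertices anticlockwise $\tilde p_1,\tilde p_2,\tilde p_3,\tilde p_4$; the four rays from the origin to these vertices, in cyclic order around the square, must rotate monotonically (the vertices of a convex quadrilateral, seen from any point of the plane — in particular from the origin, which lies inside the cone but, one checks, cannot lie inside the square since the square has all vertices in $\mathcal C$ which is ``thin''), so consecutive vertices subtend an angle of exactly $\pi/2$ at the centre of the square; translating this into constraints on the angular positions of $\tilde p_1,\dots,\tilde p_4$ forces two of the four vertices to be at angular distance $\ge \pi/2 - (\text{something})$ apart, and a short computation shows that fitting all four inside a union of two cones each of half-angle $<3\pi/8$ is impossible — the threshold $3\pi/8 = \frac12(\pi - \pi/4)$ is exactly the borderline case where a square can be placed with two opposite vertices on the two boundary rays of one side of the cone. (Concretely: place the square so that its diagonal lies along the $x$-axis; then the two off-axis vertices sit at angle $\pm\arctan$ of (half-diagonal)/(half-diagonal) $ = \pm\pi/4$... one must instead optimize the square's position, and the extremal configuration has an edge, not a diagonal, symmetric about the axis, giving the half-angle $3\pi/8$.) The main obstacle is this last elementary-geometry optimization: getting the constant exactly right, and handling the position of the origin relative to the square, requires a careful case analysis rather than a slick argument, though it is entirely elementary; everything before it (localization, the cone bound on $\Gamma$, the rescaling/compactness step) is routine.
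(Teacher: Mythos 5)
Your localization and rescaling steps are fine in spirit, but the ``purely Euclidean claim'' you reduce to --- that a unit square cannot have all four vertices in a double cone of half-angle strictly less than $3\pi/8$ about the origin --- is actually false. Consider the unit square traversed anticlockwise through $(0,0)$, $(\tfrac{1}{\sqrt 2}, -\tfrac{1}{\sqrt 2})$, $(\sqrt 2, 0)$, $(\tfrac{1}{\sqrt 2}, \tfrac{1}{\sqrt 2})$: one vertex is at the apex and the other three lie on or between the lines $y = \pm x$, so all four vertices sit in the closed double cone of half-angle $\pi/4 < 3\pi/8$. What the $C$-Lipschitz hypothesis really gives (after your localization to a single graph) is a slope bound on \emph{every pairwise difference} of points near the graph, not merely a cone condition on the positions of the vertices relative to one base point. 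In the counterexample above the diagonal $p_2 - p_4$ is the vertical vector $(0,\sqrt 2)$, which violates the slope bound --- and this is precisely the constraint your argument discards, since you only record the positions $\tilde p_j$ relative to the origin (equivalently the three differences $\tilde p_j - \tilde p_1$) and never the sixth pair $\tilde p_2 - \tilde p_4$.

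The paper's proof keeps all the pairwise data and skips the rescaling: it observes that the eight vectors $\pm(a_n,b_n)$, $\pm(-b_n,a_n)$, $\pm(a_n-b_n,a_n+b_n)$, $\pm(a_n+b_n,b_n-a_n)$ (the signed side and diagonal vectors of the $n$-th square) must all lie in the double sector $\{(t,u): |u| \le (C+o(1))|t|\}$, while their arguments form a coset of the eighth roots of unity; since for large $n$ that sector omits the closed arc $[\tfrac{3\pi}{8},\tfrac{5\pi}{8}]$ of length $\pi/4$, and every coset of $\tfrac{\pi}{4}\Z$ meets every closed arc of length $\pi/4$, this is impossible. If you want to keep the rescaling framing, two repairs are needed: rescale about the base vertex $(x_n,y_n)$ rather than $(x_*,y_*)$ (the ratio $|(x_n,y_n)-(x_*,y_*)|/l_n$ need not stay bounded, so your $\tilde S_n$ could escape to infinity), and then record the slope bound for \emph{all six} vertex differences of the limiting unit square rather than just the vertex positions; at that point the remaining elementary geometry is exactly the eighth-roots-of-unity observation above, and the constant $3\pi/8$ emerges with no case analysis at all.
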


\begin{proof}  By the local nature of infinitesimally inscribing squares, it suffices to show that each $\mathtt{Graph}_{f_i}(\R/L\Z)$ does not infinitesimally inscribe squares.  Suppose for contradiction that there was some $i=1,\dots,k$ and a sequence of squares \eqref{vert} with $(x_n,y_n) \to (x,y) \in \mathtt{Graph}_{f_i}(\R/L\Z)$, $(a_n,b_n) \to (0,0)$, and all vertices staying within $o(|a_n| + |b_n|)$ of $\mathtt{Graph}_{f_i}(\R/L\Z)$.  As $f_i$ is $C$-Lipschitz continuous, this implies that the eight points $\pm (a_n,b_n)$, $\pm (-b_n,a_n)$, $\pm (a_n-b_n, a_n+b_n)$, $\pm (a_n+b_n, a_n-b_n)$ all lie in the double sector $\{ (t,u): |u| \leq (C+o(1)) |t| \}$.  However, the arguments of these eight points (viewed as complex numbers) form a coset of the eighth roots of unity, while the double sector omits all the complex numbers of argument in $[\frac{3\pi}{8}, \frac{5\pi}{8}]$ if $n$ is large enough; but these two facts are in contradiction.
\end{proof}

\begin{remark} If one rotates the standard unit square $[0,1]^2$ by $\frac{\pi}{8}$, one obtains a square with the property that all its sides and diagonals have slope between $-\tan \frac{3\pi}{8}$ and $\tan \frac{3\pi}{8}$; in particular, the vertices of this square can be traversed by the graph of a $\tan \frac{3\pi}{8}$-Lipschitz function.  Gluing together infinitely many rescaled copies of this function, it is not difficult to show that the condition $C < \tan \frac{3\pi}{8}$ in Lemma \ref{inscribe} cannot be improved.
\end{remark}

\begin{lemma}\label{pat}  Let $\Gamma_1,\dots,\Gamma_k$ be disjoint simple polygonal paths in $\R^2$ (either open or closed). Then $\Gamma_1 \cup \dots \cup \Gamma_k$ does not infinitesimally inscribe squares.
\end{lemma}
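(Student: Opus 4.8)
The plan is to reduce, via the locality of the infinitesimal-inscription property, to an analysis of the local geometry of a simple polygonal path near one of its points, and to show that near every such point the path is contained in a union of one or two line segments emanating from that point (a vertex or an interior point of an edge), which is too ``thin'' to support inscribed infinitesimal squares. First I would invoke the remark following the definition of infinitesimally inscribed squares to reduce to showing that for each point $p \in \Gamma_1 \cup \dots \cup \Gamma_k$ there is a set agreeing with the union in a neighbourhood of $p$ that does not infinitesimally inscribe squares. Since the $\Gamma_j$ are disjoint and closed (as compact sets, or at least locally closed for open paths), only one path $\Gamma_j$ meets a small enough neighbourhood of $p$; and since $\Gamma_j$ is a \emph{simple} polygonal path, in a small enough neighbourhood of $p$ it coincides either with a single line segment through $p$ (when $p$ is an interior point of an edge, or an endpoint of an open path), or with two line segments meeting at $p$ at some angle $\theta \in (0,\pi]$ (when $p$ is a vertex, including the ``closing'' vertex of a closed path). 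In all cases the local model is $\Gamma_p = (p + \ell_1 \cup \ell_2)$ where $\ell_1, \ell_2$ are closed rays (or segments) from the origin, possibly with $\ell_2$ absent.

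Next I would argue that such a local model cannot infinitesimally inscribe squares. Suppose for contradiction there is a sequence of squares $S_n$ as in \eqref{vert} with $(x_n,y_n) \to p$, $(a_n,b_n) \to (0,0)$, and all four vertices within $o(|a_n|+|b_n|)$ of $\Gamma_p$. Rescale: translate $p$ to the origin and divide by $|a_n|+|b_n|$. After passing to a subsequence, $(a_n,b_n)/(|a_n|+|b_n|) \to (\alpha,\beta)$ with $(\alpha,\beta) \neq (0,0)$, and the four (rescaled) vertices converge to the four vertices of a (possibly degenerate-in-position but not in shape) square centred near the origin whose side-direction is determined by $(\alpha,\beta)$: concretely the four vertex \emph{directions} relative to the centre are $\pm\frac12(\alpha-\beta,\alpha+\beta)$ and $\pm\frac12(\alpha+\beta,\beta-\alpha)$ up to relabelling, i.e. four vectors obtained from one another by successive rotations by $\pi/2$. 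Meanwhile the rescaled path $\Gamma_p/(|a_n|+|b_n|)$ expands to fill out the \emph{whole} line(s) through the origin spanned by $\ell_1,\ell_2$ (the finite segment lengths blow up), so the limiting constraint is exactly that all four limiting vertices lie on the union $L_1 \cup L_2$ of (at most two) lines through the origin. The key elementary fact is then: four distinct points forming the vertices of a nondegenerate square, all lying on a union of at most two lines through a common point, is impossible. Indeed a line through the origin meets a square centred at (a translate near) the origin in at most two of its vertices only in degenerate configurations, and four vertices on two lines through one point would force two adjacent vertices plus the centre to be collinear, contradicting that adjacent vertices of a nondegenerate square are not antipodal through the centre; I would spell this out by noting that a square with centre $c$ has its four vertices at $c \pm u$, $c \pm u^\perp$ for some $u \neq 0$, and asking which of these eight-fold-symmetric points can be made collinear with a fixed point (the limit of the centres, which itself lies on $L_1 \cup L_2$) — a short case check rules it out, exactly as in the ``coset of eighth roots of unity meeting a double sector'' argument in the proof of Lemma \ref{inscribe}, here with the sector degenerated to a union of at most two lines.

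The step I expect to be the main obstacle is the limiting/compactness argument: one must be careful that the centres $(x_n,y_n)$ and the rescaled vertices stay bounded after normalising by $|a_n|+|b_n|$ (they do, since the square has diameter $O(|a_n|+|b_n|)$ and the ``$o(\cdot)$'' error is negligible after rescaling), and that ``lies within $o(|a_n|+|b_n|)$ of $\Gamma_p$'' really does pass to the limit as ``lies on $L_1 \cup L_2$'' — which requires knowing that the relevant portion of $\Gamma_p$, after rescaling, converges (in the Hausdorff sense on compact sets) to $L_1 \cup L_2$; this is where simplicity of the polygonal path and the disjointness of the $\Gamma_j$ are used, to guarantee the local model is genuinely just one or two segments with no other pieces of any path intruding near $p$. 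Everything else is the same packing-of-directions contradiction already used for Lemma \ref{inscribe}, now in the more degenerate ``two lines'' regime, and should be routine.
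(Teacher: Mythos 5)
Your proposal follows the same localize--rescale--compactness strategy as the paper, but there is a genuine gap in the rescaling step. After translating $p$ to the origin, the first vertex $(x_n, y_n)$ of the $n$-th square still sits at some distance $d_n \to 0$ from the origin, and nothing in the hypotheses forces $d_n = O(|a_n| + |b_n|)$. Consequently, after dividing by $|a_n| + |b_n|$ the rescaled vertices need not stay bounded, and your parenthetical justification --- that they do ``since the square has diameter $O(|a_n|+|b_n|)$ and the $o(\cdot)$ error is negligible after rescaling'' --- controls only the vertices' mutual distances and their distance to $\Gamma_p$, not their distance to the origin. The paper copes with this by a case split that you omit: if $p$ is an interior point of an edge on a line $\ell$, one translates by the sequence-dependent quantity $(x_n,y_n)$ (not the fixed point $p$) before rescaling, so that the rescaled vertices are bounded and within $o(1)$ of (a translate of) $\ell$; if $p$ is a vertex with adjacent edges on $\ell_1, \ell_2$, one passes to a subsequence and observes that either all four rescaled vertices hug a single $\ell_i$ (reducing to the edge case), or some vertex hugs $\ell_1$ and another hugs $\ell_2$, in which case the fact that $\ell_1, \ell_2$ meet only at the origin, combined with the bounded diameter of the square, forces the whole square to be within $O(|a_n|+|b_n|)$ of the origin, so that the fixed rescaling already gives a bounded limit. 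Without some version of this dichotomy (or an explicit re-centering), your limiting argument does not produce a convergent configuration.

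Secondarily, your ``key elementary fact'' --- that a nondegenerate square cannot have all four vertices on a union of two lines through a common point --- is false as stated: the two diagonal lines through the centre of the square are a counterexample, and your proposed eight-points case check cannot rule it out. What is true, and what you actually need, is that a square cannot have all four vertices on a union of two \emph{rays} emanating from a common point --- which is the correct rescaled local model, since near a vertex $\Gamma_p$ consists of two half-lines, not two full lines: if one ray contained two opposite vertices, the centre would lie on that ray strictly between them, forcing the other diagonal (hence the other ray) to pass through the centre and placing one of its endpoints on the \emph{negative} continuation of the ray, a contradiction. The paper's concluding sentence also loosely says ``lines'', but the paper's case analysis makes the intended ray constraint clear; your write-up leans directly on the incorrect two-lines statement as the crux and therefore does not close.
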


\begin{proof}  Again it suffices to verify that a single $\Gamma_i$ does not inscribe squares. Suppose for contradiction that there was a sequence of squares with vertices \eqref{vert} with $(x_n,y_n) \to (x,y) \in \Gamma_i$, $(a_n,b_n) \to (0,0)$, and all vertices staying within $o(|a_n| + |b_n|)$ of $\Gamma_i$.

We can translate so that $(x,y) = (0,0)$.  The origin $(0,0)$ is either a vertex on the path $\Gamma_i$ or an interior point of an edge.  Suppose first that $(0,0)$ is an interior point of an edge, which then lies on some line $\ell$.  Then for $n$ large enough, all four vertices \eqref{vert} stay within $o(|a_n|+|b_n|)$ of this line.  Applying a suitable translation and rescaling, we can then obtain another family of squares of unit length, whose vertices \eqref{vert} are bounded and stay within $o(1)$ of $\ell$.  Using compactness to extract a limit, we obtain a unit square with all four vertices on $\ell$, which is absurd.

Now suppose that $(0,0)$ is a vertex of $\Gamma_i$, which we may take to be the origin.  This origin is the meeting point of two edges of $\Gamma_i$ that lie on two distinct lines $\ell_1, \ell_2$ passing through the origin.  If (after passing through a subsequence) all four vertices \eqref{vert} lie within $o(|a_n|+|b_n|)$ of $\ell_1$, then by rescaling and taking limits as before we obtain a unit square with all four vertices on $\ell_1$, which is absurd; similarly if all four vertices lie within $o(|a_n|+|b_n|)$ of $\ell_2$.  Thus we must have at least one vertex within $o(|a_n|+|b_n|)$ of $\ell_1$ and another within $o(|a_n|+|b_n|)$ of $\ell_2$, which forces the entire square to lie within $O(|a_n|+|b_n|)$ of the origin.  Rescaling and taking limits again, we now obtain a unit square with all four vertices on the union $\ell_1 \cup \ell_2$ of the two intersecting lines $\ell_1, \ell_2$, which is again absurd regardless of what angle $\ell_1$ and $\ell_2$ make with each other.
\end{proof}

For an example of a curve that does infinitesimally inscribe squares, one can consider any curve that has the local behaviour of a cusp such as $\{ (t^2, t^3): t \in \R \}$.

We now isolate a special case of Conjecture \ref{pspp}:

\begin{conjecture}[Periodic square peg problem, special case]\label{pspps}  Conjecture \ref{pspp} is true under the additional hypothesis that $\sigma_1(\R/L\Z) \cup \sigma_2(\R/L\Z)$ does not inscribe infinitesimal squares.
\end{conjecture}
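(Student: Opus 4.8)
The plan is to deduce the statement from Conjecture~\ref{spp}: I will assume that conjecture and show that, under its hypotheses, any $L$, $\sigma_1$, $\sigma_2$ as in Conjecture~\ref{pspp} for which $\Gamma \coloneqq \sigma_1(\R/L\Z)\cup\sigma_2(\R/L\Z)$ does not inscribe infinitesimal squares must inscribe a square. I argue by contradiction, supposing $\Gamma$ inscribes no square at all.

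First I would set up the topology. Since $\sigma_1$ and $\sigma_2$ are disjoint simple closed curves in $\mathtt{Cyl}_L$ each homologous to $\mathtt{Graph}_{0,L}$, the complement $\mathtt{Cyl}_L\setminus\sigma_1(\R/L\Z)$ has two components and $\sigma_2$ lies in one of them; relabelling if necessary, $\sigma_1$ and $\sigma_2$ bound a closed annulus $A\subset\mathtt{Cyl}_L$ with $\partial A=\Gamma$. Fix a simple arc $c\colon[0,1]\to A$ meeting $\Gamma$ only at its endpoints, with $c(0)\in\sigma_1$ and $c(1)\in\sigma_2$, and for each small $\delta>0$ let $\gamma_\delta$ be the boundary of the disk obtained from $A$ by deleting a width-$\delta$ neighbourhood of $c$. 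Each $\gamma_\delta$ is a simple closed curve in $\mathtt{Cyl}_L$ which is null-homotopic, hence lifts to a simple closed curve in $\R^2$ lying in a compact set independent of $\delta$, so Conjecture~\ref{spp} produces an inscribed square $S_\delta\in\mathtt{Squares}_L$. These squares remain in a fixed compact subset of $\mathtt{Cyl}_L^4$, so along a subsequence $\delta_n\downarrow 0$ we have $S_{\delta_n}\to S_\ast\in\overline{\mathtt{Squares}_L}$; since $\gamma_{\delta_n}\to\Gamma\cup c([0,1])$ in the Hausdorff metric, all four vertices of $S_\ast$ lie on $\Gamma\cup c([0,1])$. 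If $S_\ast$ is non-degenerate then --- after arranging, by a suitable (generic) choice of $c$ and possibly a preliminary polygonal approximation in the spirit of Lemma~\ref{pat}, that the interior of $c$ avoids the thin set of points occurring as the fourth vertex of a square with its other three vertices on $\Gamma$ --- one concludes that all four vertices of $S_\ast$ lie on $\Gamma$, contradicting our assumption.

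It remains to exclude the case that $S_\ast$ is degenerate, i.e. that the squares $S_{\delta_n}$ collapse to a point $p\in\Gamma\cup c([0,1])$; this is exactly the scenario the no-infinitesimal-squares hypothesis is meant to forbid, and turning it into a contradiction is the step I expect to be the main obstacle. The vertices of $S_{\delta_n}$ lie within $O(\delta_n)$ of $\Gamma\cup c([0,1])$, so if $p\in\Gamma$ and $p$ is bounded away from $c$ --- which one can try to arrange through the choice of $c$ --- then $\mathtt{dist}(S_{\delta_n},\Gamma^4)=O(\delta_n)$, and one obtains the relation $\mathtt{dist}(S_{\delta_n},\Gamma^4)=o(|a_n|+|b_n|)$ required to contradict the hypothesis precisely when the neck width $\delta_n$ can be made to tend to zero faster than the sidelength of $S_{\delta_n}$. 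Securing this is the delicate point: one needs to show that, absent an honest inscribed square on $\Gamma$, the curves $\gamma_\delta$ still inscribe squares whose sidelength is not merely comparable to the neck width (so that a diagonal choice of $\delta_n$ is available), and one must separately dispose of the possibility that the collapsing squares concentrate on the scar $c$ rather than on $\Gamma$ --- the genuine danger being that a thin neck supports ``parasitic'' near-squares of size comparable to its width near the scar. I would attack this by choosing the geometry of the neck carefully, so that near the scar it looks like a configuration which by Lemma~\ref{inscribe} or Lemma~\ref{pat} does not inscribe infinitesimal squares, and by exploiting the freedom in the attachment points of $c$; the topological construction and the non-degenerate case above are routine by comparison.
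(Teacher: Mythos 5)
Your approach is genuinely different from the paper's, and it has a genuine gap that you yourself identify as ``the main obstacle'' but do not resolve: the parasitic near-squares on the scar $c$. The trouble is not just a matter of careful neck geometry. Any neck of width $\delta$ whose two sides are, even locally, close to a pair of parallel arcs will inscribe honest squares of sidelength comparable to $\delta$ (e.g.\ two parallel segments at distance $\delta$ always inscribe a $\delta \times \delta$ square). Thus the squares $S_{\delta_n}$ produced by Conjecture~\ref{spp} can perfectly well collapse to a point in the \emph{interior} of $c$, at positive distance from $\Gamma$, with sidelength $\asymp \delta_n$. Such a sequence is neither a square inscribed in $\Gamma$ nor an infinitesimal square of $\Gamma$ (the hypothesis $\mathtt{dist}(S_n,\Gamma^4) = o(|a_n|+|b_n|)$ fails by a wide margin), so the argument yields no contradiction. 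Choosing $c$ generically, or making the neck polygonal à la Lemma~\ref{pat}, does not help: that lemma only prevents squares shrinking into a \emph{single} polygonal arc, whereas here the two sides of the neck together form a configuration that does inscribe small squares. A separate, smaller issue is that in the non-degenerate case you only exclude one vertex on $c$ via a genericity/thin-set argument; squares with two or more vertices on the scar are not ruled out.

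The paper (Proposition~\ref{period}) sidesteps the entire problem by avoiding a cut altogether. It applies the near-conformal ``pinching'' map $\phi_n(x,y) = (n\tanh(x/n), y\operatorname{sech}^2(x/n))$ to the full periodic lift $\pi_L^{-1}(\Gamma)$, which compresses both ends toward the two isolated points $(\pm n,0)$. Adjoining just these two points closes the two compressed arcs into a single simple closed curve, and two isolated points cannot support a one-parameter family of parasitic degenerating squares the way an arc can. A short geometric argument (on the arguments of the side vectors, using the lens-shaped image~\eqref{xiy}) rules out any inscribed square having one of $(\pm n,0)$ as a vertex; the remaining squares pull back under $\phi_n^{-1}$, whose Jacobian is a near-similarity, to near-squares on $\Gamma$, which in the limit give either an honest inscribed square or an infinitesimal one. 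This is what makes the argument close, and it is the step your proposal is missing.
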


The main result of this section is then

\begin{proposition}\label{imp}  Conjecture \ref{spp} implies Conjecture \ref{pspps}.  In particular (by Lemma \ref{pat}), Conjecture \ref{spp} implies the special case of Conjecture \ref{pspp} when the curves $\sigma_1, \sigma_2$ are polygonal paths.
\end{proposition}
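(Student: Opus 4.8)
The plan is to deduce Conjecture~\ref{pspps} from Conjecture~\ref{spp} by \emph{unrolling} the cylinder: one lifts $\sigma_1,\sigma_2$ to the universal cover, performs a surgery to produce a genuine simple closed curve in $\R^2$, applies Conjecture~\ref{spp} to it, and pushes the resulting square back down to $\mathtt{Cyl}_L$; the hypothesis that $\sigma_1(\R/L\Z)\cup\sigma_2(\R/L\Z)$ inscribes no infinitesimal squares enters precisely to prevent the square produced upstairs from degenerating. Concretely, write $\pi_L\colon\R^2\to\mathtt{Cyl}_L$ for the covering map and lift $\sigma_1,\sigma_2$ to $L\Z$-equivariant simple curves $\tilde\sigma_1,\tilde\sigma_2\colon\R\to\R^2$, so $\tilde\sigma_i(t+L)=\tilde\sigma_i(t)+(L,0)$. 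Since each $\sigma_i$ is continuous on a compact domain and the deck transformations are horizontal, each $\tilde\sigma_i(\R)$ lies in a horizontal strip $\R\times[c_i,d_i]$ with $t\mapsto\tilde\sigma_i(t)_x-t$ bounded; $\tilde\sigma_1,\tilde\sigma_2$ are injective and have disjoint images, each of which is a properly embedded line separating $\R^2$. Relabelling so that $\tilde\sigma_2(\R)$ lies above $\tilde\sigma_1(\R)$, the open region $\tilde A$ lying above $\tilde\sigma_1(\R)$ and below $\tilde\sigma_2(\R)$ is connected, $L\Z$-invariant, contained in a bounded strip $\R\times[c_1,d_2]$, contains the open strip $\{\,d_1<y<c_2\,\}$, and descends to the annulus that $\sigma_1,\sigma_2$ bound in $\mathtt{Cyl}_L$.

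First I would build, for each large natural number $N$ and fixed parameters $\alpha,\beta$, a simple closed curve $\Gamma_N\subset\R^2$ by concatenating the long arc $\tilde\sigma_1|_{[\alpha,\alpha+NL]}$, a simple polygonal arc from $\tilde\sigma_1(\alpha+NL)$ to $\tilde\sigma_2(\beta+NL)$ with interior in $\tilde A$, the reversed arc $\tilde\sigma_2|_{[\beta,\beta+NL]}$, and a simple polygonal arc from $\tilde\sigma_2(\beta)$ to $\tilde\sigma_1(\alpha)$ with interior in $\tilde A$. By $L\Z$-equivariance one may take the first joining arc to be the $(NL,0)$-translate of the second, and routing the middle of each joining arc through the strip $\{\,d_1<y<c_2\,\}$ makes $\Gamma_N$ simple once $NL$ exceeds the diameter of a joining arc. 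As $\Gamma_N\subset\R\times[c_1,d_2]$, every inscribed square has sidelength at most $d_2-c_1$, so the square $S_N$ furnished by Conjecture~\ref{spp} is non-degenerate of bounded size; translating $S_N$ by a suitable multiple of $(L,0)$ brings its four vertices into a fixed compact box. Now since $\pi_L^{\oplus4}(\mathtt{Squares})=\mathtt{Squares}_L$ and $\pi_L\bigl(\tilde\sigma_i|_{[\alpha,\alpha+NL]}\bigr)=\sigma_i(\R/L\Z)$, if some $S_N$ has no vertex on a joining arc then $\pi_L^{\oplus4}(S_N)$ exhibits $\sigma_1(\R/L\Z)\cup\sigma_2(\R/L\Z)$ inscribing a square and we are finished.

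Otherwise every $S_N$ has a vertex on a joining arc. Translating each $S_N$ by a multiple of $(L,0)$ so that this vertex lands in a fixed bounded region and passing to a subsequence, I would extract a limit $S_N\to S_\infty\in\overline{\mathtt{Squares}}$; the curves $\Gamma_N$, after the same translations, converge near $S_\infty$ to a ``capped'' curve of the form $\tilde\sigma_1|_{J}\cup K\cup\tilde\sigma_2|_{J'}$, where $K$ is a (polygonal) joining arc and $J,J'$ are half-lines, and $S_\infty$ is inscribed in it with at least one vertex on $K$. It then remains to rule out two bad scenarios. If $S_\infty$ is degenerate (a point $z$): if $z\in\tilde\sigma_1(\R)\cup\tilde\sigma_2(\R)$ lies off $K$, then the $\pi_L^{\oplus4}(S_N)$ are non-degenerate squares in $\mathtt{Squares}_L$ converging to the degenerate square at $\pi_L(z)\in\sigma_1(\R/L\Z)\cup\sigma_2(\R/L\Z)$ with every vertex exactly on $\sigma_1(\R/L\Z)\cup\sigma_2(\R/L\Z)$, so that set would inscribe infinitesimal squares, contradicting the hypothesis; if $z$ lies on $K$ away from its endpoints, a neighbourhood of $z$ in the capped curve is polygonal, and the argument proving Lemma~\ref{pat} forbids the degeneration.

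The remaining, and genuinely delicate, case is that $S_\infty$ is non-degenerate but has a vertex on the artificial arc $K$. This must be excluded by an appropriate choice of the cut parameters $\alpha,\beta$ and of the joining arcs --- for instance by cutting $\tilde\sigma_i$ at carefully chosen extremal points of its coordinate functions and attaching the arcs so as to kill right angles near the cut points and along the arcs (in the spirit of the endpoint argument at the close of the proof of Theorem~\ref{main}), and applying the degeneration exclusions above at the cut points as well. Arranging joining arcs that are simultaneously simple, stable under the limit $N\to\infty$, and incapable of carrying a square vertex is, I expect, the main obstacle. In the special case that $\sigma_1,\sigma_2$ are polygonal paths the situation is much cleaner: the capped curves are then polygonal, so by Lemma~\ref{pat} no degeneration can occur anywhere, and only the non-degenerate vertex-on-$K$ phenomenon must be handled, which yields the asserted consequence for polygonal $\sigma_1,\sigma_2$.
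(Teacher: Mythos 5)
Your high-level plan — transform the periodic configuration into a bounded one, apply Conjecture~\ref{spp}, transport the resulting square back, and use the no-infinitesimal-squares hypothesis to kill degeneration — is exactly the plan of the paper's Proposition~\ref{period}. But your specific implementation has a genuine gap that you yourself identify and do not close: you truncate the lifts over $N$ periods and close them with two ``joining arcs'' $K$, and you cannot rule out that the limit square $S_\infty$ is non-degenerate with a vertex on $K$. Since the joining arcs are one-dimensional, a priori there is nothing preventing a square from grabbing a vertex there, and ``cutting at extremal points and attaching arcs so as to kill right angles'' is precisely the hard step: the curve has to be simple, stable under the $N\to\infty$ limit, and free of inscribed-square vertices, and no construction is given. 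In the polygonal case you also still leave this same case open, so even the ``in particular'' clause is not actually proved.

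The paper sidesteps this obstacle by a cleverer choice of compactification. Instead of truncating and adding arcs, it applies the map $\phi_n(x,y)=(n\tanh(x/n),\,y\,\operatorname{sech}^2(x/n))$ to the full $L\Z$-periodic lift $\pi_L^{-1}(\Gamma)$. Because $\operatorname{sech}^2(x)=O(1-|\tanh x|)$, the image is squeezed into the lens-shaped region $\{(x,y):\,-n<x<n,\ y=O(1-|x|/n)\}$, whose closure needs only the two extra \emph{points} $(\pm n,0)$, not arcs, to become a simple closed curve. The artificial set one has to exclude as a square vertex is then zero-dimensional, and the exclusion is automatic: near $(\pm n,0)$ the region has opening angle $O(1/n)$, so the directions from $(\pm n,0)$ to the other three would-be vertices would all have to have argument $O(1/n)$, which is incompatible with those arguments differing by $\pi/4$ and $\pi/2$. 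In addition, $\phi_n$ is approximately conformal on the image, so (after rescaling by the local dilation factor $1-x_n^2/n^2$ and taking a subsequential limit) the square pulled back by $\phi_n^{-1}$ converges to either a genuine square in $\Gamma$ or an infinitesimally inscribed square. That single carefully chosen map replaces the entire joining-arc surgery, and in particular eliminates the unresolved case in your proposal.
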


To put it another way, if one wished to disprove Conjecture \ref{spp}, it would suffice to produce a union $\sigma_1(\R) \cup \sigma_2(\R)$ of two disjoint periodic curves which did not inscribe any squares or infinitesimal squares.

Proposition \ref{imp} is an immediate consequences of the following proposition:

\begin{proposition}[Transforming periodic sets to bounded sets]\label{period}  Let $\Gamma$ be a compact subset of $\mathtt{Cyl}_L$ for some $L>0$, and let $\pi_L^{-1}(\Gamma)$ be its lift to $\R^2$.  For every large natural number $n$, let $\phi_n \colon \R^2 \to \R^2$ denote the map\footnote{One can view this map as an approximation to the conformal map $z \mapsto n \operatorname{tanh} \frac{z}{n}$ in the complex plane, which ``gently pinches'' the periodic set  $\pi_L^{-1}(\Gamma)$ to a bounded set in a manner that almost preserves squares.  The use of trigonometric functions here is primarily for notational convenience; one could also use other maps than $\phi_n$ here as long as they obeyed the above-mentioned qualitative features of approximate conformality and mapping periodic sets to bounded sets.
}
$$ \phi_n(x,y) \coloneqq \left(n \operatorname{tanh} \frac{x}{n}, y \operatorname{sech}^2 \frac{x}{n}\right).$$
Then at least one of the following three statements hold.
\begin{itemize}
\item[(i)]  $\Gamma$ inscribes a square.
\item[(ii)]  $\Gamma$ inscribes infinitesimal squares.
\item[(iii)] For sufficiently large $n$, $\phi_n(\pi_L^{-1}(\Gamma)) \cup \{ (-n,0), (n,0) \}$ does not inscribe a square.
\end{itemize}
\end{proposition}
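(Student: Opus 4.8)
\emph{Set-up.} We may assume $\Gamma\neq\emptyset$, since otherwise $\phi_n(\pi_L^{-1}(\Gamma))\cup\{(-n,0),(n,0)\}$ has only two points and (iii) holds trivially. As (iii) is one of the three alternatives, it suffices to prove that if (iii) fails then (i) or (ii) holds. So suppose (iii) fails; then along some sequence $n\to\infty$ (we pass to subsequences freely below) the set $\phi_n(\pi_L^{-1}(\Gamma))\cup\{(-n,0),(n,0)\}$ inscribes a square $Q_n$, with vertices $v_n^{(1)},\dots,v_n^{(4)}$ (anticlockwise) and sidelength $s_n>0$. Fix $M>0$ with $\pi_L^{-1}(\Gamma)\subseteq\R\times[-M,M]$, and write $\phi_n(x,y)=(F_n(x),F_n'(x)y)$ with $F_n(x)=n\operatorname{tanh}\tfrac{x}{n}$ and $F_n'(x)=\operatorname{sech}^2\tfrac{x}{n}=1-(F_n(x)/n)^2\in(0,1]$, so that $\phi_n(\pi_L^{-1}(\Gamma))\subseteq(-n,n)\times[-M,M]$. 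The idea, as flagged in the footnote to the statement, is that on the slab $|y|\le M$ the map $\phi_n$ is an $O(1/n)$-perturbation of the genuine conformal map $z\mapsto n\operatorname{tanh}\tfrac{z}{n}$, which carries small squares to near-squares; pulling $Q_n$ back by $\phi_n^{-1}$, using the $L\Z$-periodicity of $\pi_L^{-1}(\Gamma)$ to translate the result into a fixed bounded box, and passing to a limit will produce a (possibly degenerate) square inscribed in $\pi_L^{-1}(\Gamma)$, hence in $\Gamma$.

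\emph{Step 1: the slab estimate.} First I would record the geometric constraints the slab $|y|\le M$ forces on $Q_n$. A square with all four vertices in a horizontal slab of height $2M$ has sidelength $\le 2M$, so $s_n\le 2M$. Next, no vertex of $Q_n$ equals $(\pm n,0)$ once $n$ is large: if, say, $v_n^{(4)}=(n,0)$ then all four vertices have first coordinate within $\sqrt2\,s_n$ of $n$, so by the formula for $\phi_n$ each vertex has second coordinate of size $O(s_n/n)$, whereas the second coordinates of the four vertices of a genuine square of sidelength $s_n$ span an interval of length $\ge s_n$ --- a contradiction for $n\gg_M1$. Hence all four vertices genuinely lie on $\phi_n(\pi_L^{-1}(\Gamma))$, say $v_n^{(i)}=\phi_n(x_n^{(i)},y_n^{(i)})$ with $(x_n^{(i)},y_n^{(i)})\in\pi_L^{-1}(\Gamma)$, $|y_n^{(i)}|\le M$. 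Let $c_n\in(-n,n)$ be the first coordinate of the centre of $Q_n$ and put $\lambda_n\coloneqq 1-(c_n/n)^2$, the value of the conformal factor $\operatorname{sech}^2\tfrac{x}{n}$ of $\phi_n$ at the point $x$ with $F_n(x)=c_n$. Since the first coordinate of $v_n^{(i)}$ differs from $c_n$ by $O(s_n)$, the conformal factor at $x_n^{(i)}$ is $\lambda_n+O(s_n/n)$, so the second coordinate of $v_n^{(i)}$ has absolute value $\le M\lambda_n+O(s_n/n)$; comparing with the length-$\ge s_n$ spread of these four numbers yields the key estimate $s_n\le 2M\lambda_n+O(s_n/n)$, i.e.\ $s_n\le(2M+o(1))\lambda_n$.

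\emph{Step 2: pull-back and limit.} Using $s_n\le(2M+o(1))\lambda_n$ one checks (this is the routine part) that $P_n\coloneqq\phi_n^{-1}(Q_n)$ lies in the slab $|y|\le 2M$ and has diameter $O(1)$: the second coordinate of $\phi_n^{-1}(u,v)$ is $v\,n^2/(n^2-u^2)$ with $n^2-u^2=n^2\lambda_n(1+O(1/n))$ on $Q_n$, while the first coordinate of $\phi_n^{-1}$ stretches $Q_n$ by a factor $(1+o(1))/\lambda_n$, of total $x$-extent $O(s_n/\lambda_n)=O(1)$. On the bounded region $P_n$ we have $D\phi_n=\operatorname{sech}^2\tfrac{x}{n}$ times a unipotent lower-triangular matrix whose off-diagonal entry is $O(|y|/n)=O(1/n)$, and $\operatorname{sech}^2\tfrac{x}{n}=\lambda_n(1+O(1/n))$ there (its logarithmic derivative is $O(1/n)$ and the $x$-extent of $P_n$ is $O(1)$); hence $D\phi_n=\lambda_n(\operatorname{Id}+O(1/n))$ uniformly on $P_n$, so $\phi_n^{-1}$ agrees on $Q_n$ with a similarity of ratio $\lambda_n^{-1}$ up to relative error $O(1/n)$. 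Thus $\tilde Q_n\coloneqq\phi_n^{-1}(Q_n)$, whose four vertices lie on $\pi_L^{-1}(\Gamma)$, has vertices within $O(\tilde s_n/n)$ of those of a genuine square $\tilde Q_n^{\ast}$ of sidelength $\tilde s_n\coloneqq s_n/\lambda_n\le 2M+o(1)$. Now translate $\tilde Q_n$ and $\tilde Q_n^{\ast}$ by an integer multiple of $(L,0)$ (under which $\pi_L^{-1}(\Gamma)$ is invariant) so that one vertex lands in $[0,L)\times[-M,M]$; the translates $\hat Q_n,\hat Q_n^{\ast}$ then lie in a fixed bounded box, $\hat Q_n$ has all vertices on the closed set $\pi_L^{-1}(\Gamma)$ (closed since $\Gamma$ is compact and $\pi_L$ continuous), and the vertices of $\hat Q_n$ and $\hat Q_n^{\ast}$ are within $O(\tilde s_n/n)$ of each other. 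Passing to a subsequence with $\tilde s_n\to\hat s\in[0,2M]$ and convergent vertices, the limits are four points of $\pi_L^{-1}(\Gamma)$ forming a genuine square $\hat Q$ of sidelength $\hat s$, possibly degenerate.

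\emph{Step 3: conclusion and main obstacle.} If $\hat s>0$, then $\hat Q$ is a non-degenerate square with all four vertices in $\pi_L^{-1}(\Gamma)$; applying $\pi_L$ (which carries $\pi_L^{-1}(\Gamma)$ into $\Gamma$ and genuine squares into $\mathtt{Squares}_L$) shows that $\Gamma$ inscribes a square --- statement (i). If $\hat s=0$, then $\tilde s_n\to0$, the squares $\hat Q_n^{\ast}$ are genuine (positive sidelength $\tilde s_n$), they converge to the single point $\hat Q$, and their vertices are within $O(\tilde s_n/n)=o(\tilde s_n)$ of $\pi_L^{-1}(\Gamma)$; applying $\pi_L$ yields a sequence of squares in $\mathtt{Squares}_L$ converging to a degenerate square at a point of $\Gamma$, with distance to $\Gamma^4$ equal to $o(|a_n|+|b_n|)$ --- that is, $\Gamma$ inscribes infinitesimal squares, statement (ii). I expect the main obstacle to be precisely the behaviour of the $Q_n$ near the ends $(\pm n,0)$, where $\phi_n^{-1}$ has unbounded derivative and could a priori pull a square back to an unbounded, wildly distorted set; the slab estimate $s_n\le(2M+o(1))\lambda_n$ is what defeats this, since it simultaneously forces any near-end square to be flat enough never to touch the lines $x=\pm n$, keeps the conformal distortion $O(1/n)$ along the whole pull-back, and bounds the pulled-back sidelength $\tilde s_n$, which is what legitimises the compactness step. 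A lesser point requiring care is that the degenerate case must be matched to the exact definition of inscribing infinitesimal squares (genuine squares with vertices at distance $o(\text{sidelength})$ from $\Gamma$), which is why a relative, rather than merely additive, error is tracked in Step 2.
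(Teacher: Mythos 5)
Your proposal is correct and follows essentially the same route as the paper: rule out the degenerate case where a vertex is $(\pm n,0)$, derive a bound showing the square must be small relative to its distance from the pinch points (your slab estimate $s_n\le(2M+o(1))\lambda_n$ is the paper's $|a_n|+|b_n|=O(1-|x_n|/n)$ in slightly different clothing), pull back by $\phi_n^{-1}$ using the fact that on the bounded pullback region $D\phi_n^{-1}$ is a scalar up to $O(1/n)$, translate by a multiple of $(L,0)$ into a compact region, and take limits, landing on (i) or (ii) according to whether the limiting square is degenerate. The only cosmetic differences are that you rule out $(\pm n,0)$ as a vertex via the $y$-spread of a square rather than the paper's argument about the arguments of the displacement vectors as complex numbers, and you phrase the local approximate-conformality via $D\phi_n$ rather than the paper's explicit Taylor expansion of $\phi_n^{-1}$; both are equivalent.
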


Indeed, to establish Conjecture \ref{pspps} assuming Conjecture \ref{spp}, one simply applies Proposition \ref{period} to the set $\Gamma \coloneqq \sigma_1(\R/L\Z) \cup \sigma_2(\R/L\Z)$; the conclusion (ii) is ruled out by hypothesis and the conclusion (iii) is ruled out by Conjecture \ref{spp}, leaving (i) as the only possible option.

\begin{proof}  We will assume that (iii) fails and conclude that either (i) or (ii) holds.

By hypothesis, we can find a sequence of $n$ going to infinity, and a sequence of squares with vertices \eqref{vert}, such that each square \eqref{vert} is inscribed in $\phi_n(\pi_L^{-1}(\Gamma))$.  The plan is to transform these squares into squares that either converge to a square inscribed in $\Gamma$, or become an infinitesimal inscribed square in $\Gamma$.

We first rule out a degenerate case when one of the points $(-n,0), (n,0)$ is one of the vertices \eqref{vert}.
Suppose for instance that $(x_n,y_n)$ was equal to $(n,0)$.  Since $\Gamma$ is compact, we see that $\pi_L^{-1}(\Gamma)$ is contained in a strip of the form
$$ \{ (x,y): y = O(1) \}.$$
Using the identity 
$$\operatorname{sech}^2(x) = 1 - \operatorname{tanh}^2(x) = O( 1 - |\operatorname{tanh}(x)|),$$
we conclude that $\phi_n(\pi_L^{-1}(\Gamma))$ is contained in the region
\begin{equation}\label{xiy}
 \left\{ (x,y): -n < x < n; \quad y = O\left( 1 - \frac{|x|}{n} \right)\right\}.
\end{equation}
If $(x_n,y_n) = (n,0)$ and the remaining three vertices of \eqref{vert} lie in $\phi_n(\pi_L^{-1}(\Gamma))$, this forces $(-a_n,-b_n), (-a_n+b_n,-a_n-b_n), (b_n, -a_n)$ to all have argument $O(1/n)$ when viewed as complex numbers, which is absurd for $n$ large enough, since these arguments differ by $\frac{\pi}{4}$ or $\frac{\pi}{2}$.  Thus we have $(x_n,y_n) \neq (n,0)$ (after passing to a subsequence of $n$ if necessary); a similar argument precludes any of the vertices in \eqref{vert} being equal to $(-n,0)$ or $(n,0)$.

At least one of the four vertices in \eqref{vert} must have a $y$-coordinate of magnitude at least $\frac{|a_n|+|b_n|}{2}$, since two of these $y$-coordinates differ by $a_n+b_n$ and the other two differ by $a_n-b_n$.  Applying \eqref{xiy}, we conclude that the $x$-coordinate of that vertex lies at a distance at least $c n |a_n|+|b_n|$ from $(-n,0), (n,0)$ for some $c>0$ independent of $n$; by the triangle inequality, we conclude (for $n$ large enough) that all four vertices have this property.  In particular, we have
$$ |a_n| + |b_n|= O\left( 1 - \frac{|x_n|}{n}\right).$$
Observe that in the region \eqref{xiy}, we may invert $\phi_n$ by the formula
$$ \phi_n^{-1}(x, y) \coloneqq \left(\frac{n}{2} \log \frac{n+x}{n-x}, \frac{y}{1 - x^2/n^2}\right).$$
On \eqref{xiy}, we can compute the partial derivatives
\begin{align*}
 \frac{\partial}{\partial x} \phi_n^{-1}(x,y) &= \left(\frac{1}{1-x^2/n^2}, \frac{2x}{n^2} \frac{y}{(1-x^2/n^2)^2}\right) \\
&= \frac{1}{1-x^2/n^2} \left(1, O\left(\frac{1}{n}\right)\right)
\end{align*}
and
$$ \frac{\partial}{\partial y} \phi_n^{-1}(x,y) = \left(0,\frac{1}{1-x^2/n^2}\right)$$
and so by Taylor expansion we see that
\begin{align*}
\phi^{-1}(x_n+a_n,y_n+b_n) &= \phi^{-1}(x_n,y_n) + \frac{(a_n,b_n)}{1-x_n^2/n^2} + O\left( \frac{|a_n|+|b_n|}{n (1-x_n^2/n^2)}\right ) \\
\phi^{-1}(x_n+a_n-b_n,y_n+a_n+b_n) &= \phi^{-1}(x_n,y_n) + \frac{(a_n-b_n,a_n+b_n)}{1-x_n^2/n^2} + O\left( \frac{|a_n|+|b_n|}{n (1-x_n^2/n^2)} \right) \\
\phi^{-1}(x_n-b_n,y_n+a_n) &= \phi^{-1}(x_n,y_n) + \frac{(-b_n,a_n)}{1-x_n^2/n^2} + O\left( \frac{|a_n|+|b_n|}{n (1-x_n^2/n^2)}\right ).
\end{align*}
Thus, if we set $(\tilde x_n, \tilde y_n) \coloneqq \pi_L(\phi^{-1}(x_n,y_n))$ and $(\tilde a_n, \tilde b_n) \coloneqq \frac{(a_n,b_n)}{1-x_n^2/n^2}$, we see that $\tilde a_n, \tilde b_n = O(1)$, and the four vertices of the square
$$ ((\tilde x_n, \tilde y_n), (\tilde x_n + \tilde a_n, \tilde y_n + \tilde b_n), (\tilde x_n + \tilde a_n - \tilde b_n, \tilde y_n + \tilde a_n + \tilde b_n), (\tilde x_n - \tilde b_n, \tilde y_n + \tilde a_n)) \in \mathtt{Squares}_L $$
all lie within $O(|\tilde a_n| + |\tilde b_n|/n)$ of $\Gamma$.

By passing to a subsequence, we may assume that $(\tilde a_n, \tilde b_n)$ converges to some pair $(a,b)$, which may possibly be equal to $(0,0)$.  By compactness of $\Gamma$, we may similarly assume that $(\tilde x_n, \tilde y_n)$ converges to a limit $(x,y) \in \Gamma$.  If $(a,b) \neq (0,0)$, then on taking limits using the closed nature of $\Gamma$ we conclude that the non-degenerate square 
$$ ((x,y), (x+a, y+b), (x+a-b, y+a+b), (x-b, y+a)) \in \mathtt{Squares}_L$$
is inscribed in $\Gamma$, giving (i); if instead $(a,b)=(0,0)$ then we obtain (ii).
\end{proof}

\begin{remark} In contrast to the smooth cases of Conjecture \ref{spp}, there are no homological obstructions to establishing a counterexample to Conjecture \ref{pspp}.  For instance, when the Lipschitz constants of $f,g$ are strictly less than one, the arguments of the previous section can be used to produce a quadruplet $(\gamma_1,\gamma_2,\gamma_3,\gamma_4)$ of rectifiable curves $\gamma_1,\gamma_2,\gamma_3,\gamma_4 \colon \R/L\Z \to \mathtt{Cyl}_L$ traversing squares, with $\gamma_1, \gamma_2, \gamma_4$ taking values in $\mathtt{Graph}_f(\R/L\Z)$, $\mathtt{Graph}_f(\R/L\Z)$, $\mathtt{Graph}_g(\R/L\Z)$ respectively, and all four curves homologous to the standard $1$-cycle $\mathtt{Graph}_{0,L}$.  In particular, $\gamma_3$ would (assuming sufficient transversality and regularity) intersect the graphs $\mathtt{Graph}_f(\R/L\Z)$ and $\mathtt{Graph}_g(\R/L\Z)$ an even number of times per unit period, rather than an odd number of times.  This is of course consistent with the curve not intersecting these graphs at all.  The use of an infinite oscillation to switch the parity of intersection from odd to even is reminsicent of the ``Eilenberg-Mazur swindle'' (see e.g. \cite{swindle}).
\end{remark}

\section{A quadripartite variant}\label{quad}

In Conjecture \ref{pspp}, the four vertices of the square could be distributed arbitrarily among the two graphs $\mathtt{Graph}_f(\R/L\Z)$ and $\mathtt{Graph}_g(\R/L\Z)$.  It seems to be more natural to force each vertex to lie in just one of the two graphs.  To formulate this more precisely, we introduce a further definition:

\begin{definition}[Jointly inscribing squares]\label{joint-def}  Let $L>0$.  Let $\Gamma_1, \Gamma_2, \Gamma_3, \Gamma_4$ be four sets in $\mathtt{Cyl}_L$ (possibly overlapping).  We say that the quadruplet $(\Gamma_1, \Gamma_2, \Gamma_3, \Gamma_4)$ \emph{jointly inscribes a square} if $\Gamma_1 \times \Gamma_2 \times \Gamma_3 \times \Gamma_4$ intersects $\overline{\mathtt{Squares}_L}$, or equivalently if there exist $x \in \R/L\Z$ and $y,a,b \in \R$ such that
\begin{align*}
(x,y) &\in \Gamma_1 \\
(x+a,y+b) &\in \Gamma_2 \\
(x+a-b, y+a+b) &\in \Gamma_3 \\
(x-b, y+a) &\in \Gamma_4.
\end{align*}
See Figure \ref{joint}.
\end{definition}

\begin{figure} [t]
\centering
\includegraphics[width=5in]{./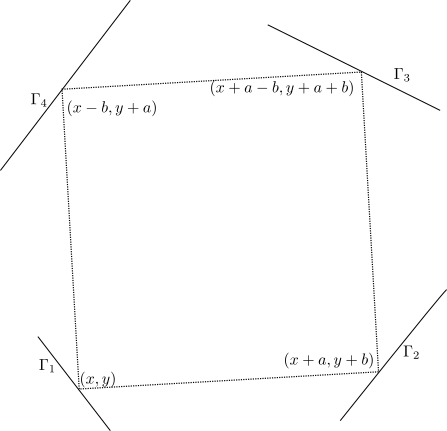}
\caption{Four line segments $(\Gamma_1, \Gamma_2, \Gamma_3, \Gamma_4)$ jointly inscribing a square.  The order (up to cyclic permutation) is important; for instance, in the given picture, the quadruple $(\Gamma_1, \Gamma_4, \Gamma_3, \Gamma_2)$ does \emph{not} jointly inscribe a square.}
\label{joint}
\end{figure}

Note in Definition \ref{joint-def} that we now permit the inscribed square to be degenerate.  Conjecture \ref{pspp} would then follow from

\begin{conjecture}[Quadripartite periodic square peg problem]\label{qpspp}  Let $L>0$, and let $\sigma_1, \sigma_2 \colon \R/L\Z \to \mathtt{Cyl}_L$ be simple closed curves homologous to $\mathtt{Graph}_{0,L}$.  Then the quadruplet
$$(\sigma_1(\R/L\Z), \sigma_1(\R/L\Z), \sigma_2(\R/L\Z), \sigma_2(\R/L\Z))$$ 
jointly inscribes a square.
\end{conjecture}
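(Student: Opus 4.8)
Since the final statement is a conjecture, the point of a plan is to show how far the integration method of Section \ref{pos-sec} reaches and where it stalls. First one may assume $\sigma_1(\R/L\Z)$ and $\sigma_2(\R/L\Z)$ are disjoint, since a common point yields a degenerate jointly inscribed square. Working with $L\Z$-equivariant lifts to $\R^2$, one seeks a quadruple $(\gamma_1,\gamma_2,\gamma_3,\gamma_4)$ of $L$-periodic rectifiable curves that traverses squares, with $\gamma_1,\gamma_2$ valued on $\sigma_1$, $\gamma_4$ valued on $\sigma_2$, and $\gamma_3$ free, all four homologous to $\mathtt{Graph}_{0,L}$. When $\sigma_1=\mathtt{Graph}_f$ and $\sigma_2=\mathtt{Graph}_g$ for $(1-\eps)$-Lipschitz periodic $f,g$, this is done exactly as in the proof of Theorem \ref{main}: the map $(a,b)\mapsto(g(t-b)-f(t),\,f(t+a)-f(t))$ is a contraction with constant $1-\eps$, producing $L$-periodic Lipschitz functions $a(t),b(t)$, and one sets $\gamma_1(t)=(t,f(t))$, $\gamma_2(t)=(t+a(t),f(t+a(t)))$, $\gamma_4(t)=(t-b(t),g(t-b(t)))$, and $\gamma_3(t)=(t+a(t)-b(t),\,f(t)+a(t)+b(t))$.

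The argument of Proposition \ref{pro} then shows $\gamma_3$ is simple: a coincidence $\gamma_3(t)=\gamma_3(t')$ forces $\gamma_2(t)-\gamma_2(t')$ and $\gamma_4(t)-\gamma_4(t')$ to be $90^\circ$ rotations of one another, yet both have slope of magnitude at most $1-\eps$ (the first because $\gamma_2(t),\gamma_2(t')\in\mathtt{Graph}_f$, the second because $\gamma_4(t),\gamma_4(t')\in\mathtt{Graph}_g$), which is impossible unless both vanish, whence $t=t'$. Since $a,b$ are periodic, Lemma \ref{conserv} over one period gives $\int_{\gamma_1}y\,dx-\int_{\gamma_2}y\,dx+\int_{\gamma_3}y\,dx-\int_{\gamma_4}y\,dx=0$; by Example \ref{integ} and the (degree-one) changes of variables $t\mapsto t+a(t)$ and $t\mapsto t-b(t)$, the first, second and fourth integrals equal $\int_0^L f$, $\int_0^L f$, $\int_0^L g$, so $\int_{\gamma_3}y\,dx=\int_0^L g(t)\,dt=\int_{\mathtt{Graph}_g}y\,dx$. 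Hence the null-homologous $1$-cycle $\gamma_3-\mathtt{Graph}_g$ in $\mathtt{Cyl}_L$ satisfies $\int_{\gamma_3-\mathtt{Graph}_g}y\,dx=0$. If $\gamma_3$ were disjoint from $\mathtt{Graph}_g=\sigma_2$, then --- being simple, connected, and homologous to the generator --- it would cobound an embedded annulus $A$ with $\mathtt{Graph}_g$, and by Stokes' theorem exactly as in the proof of Lemma \ref{repo} the displayed integral would equal $\pm|A|\neq 0$, a contradiction. So $\gamma_3$ meets $\sigma_2$ at some parameter $t_*$, and then $\gamma_1(t_*),\gamma_2(t_*)\in\sigma_1$ and $\gamma_3(t_*),\gamma_4(t_*)\in\sigma_2$ jointly inscribe a square, non-degenerate because $\sigma_1,\sigma_2$ are disjoint. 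This settles the conjecture for disjoint graphs of Lipschitz constant less than one.

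The main obstacle is the general case. For a simple closed curve $\sigma_1$ homologous to $\mathtt{Graph}_{0,L}$ that is not a graph, or whose Lipschitz constant is at least one, the contraction mapping in the first step fails and one must build the square-traversing family $(\gamma_1,\dots,\gamma_4)$ by some other device (for instance a degree argument on the space of ordered pairs of points of $\sigma_1$); but the more serious difficulty is that one then has no reason for $\gamma_3$ to be simple, which is exactly what is needed to invoke Lemma \ref{repo} in the last step. Without simplicity of $\gamma_3$, the vanishing of $\int_{\gamma_3-\mathtt{Graph}_g}y\,dx$ is perfectly consistent with $\gamma_3$ and $\sigma_2$ being disjoint, since a non-embedded closed curve can enclose zero signed area; this is precisely the oscillation phenomenon (the ``Eilenberg-Mazur swindle'' alluded to earlier) that flips the periodic intersection count from odd to even. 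Overcoming it appears to require replacing the conserved identity of Lemma \ref{conserv} by a genuine area inequality that survives passage to non-embedded limits --- the content of Conjecture \ref{area-ineq} --- rather than the identity itself, and this is where the problem remains open.
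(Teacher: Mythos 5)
Since the statement is a conjecture the paper does not prove it in general, and your proposal correctly recognizes this; what you do establish --- the disjoint-Lipschitz-graph case with constant $<1$ via the contraction map, the conservation identity of Lemma~\ref{conserv} over a period (so the boundary terms cancel), and the Jordan/Stokes step on the cylinder --- is exactly the special case the paper obtains through Theorem~\ref{ook} (applied to the quadruple $(\sigma_1,\sigma_1,\sigma_2,\sigma_2)$), and your diagnosis of the obstruction (loss of simplicity of $\gamma_3$, so that vanishing of $\int_{\gamma_3-\mathtt{Graph}_g}y\,dx$ no longer forces an intersection; the resulting parity flip being the ``swindle'' phenomenon) is precisely the one the paper identifies. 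One small refinement worth noting: the paper does not stop at the conserved \emph{identity} but reformulates the remaining difficulty as the area \emph{inequality} Conjecture~\ref{area-ineq}, which it then makes combinatorial (Conjectures~\ref{area-1d}, \ref{sai}, \ref{adf}) by pushing the homological bookkeeping further ($1$-cycle $\sigma_{124}$, its degree-one component $\sigma_{124}^0$, lifts to $\mathtt{Cyl}_{mL}$, and the winding-number functions $W_i$, $W_{ii'}$); your last paragraph gestures at exactly this move but does not develop it, which is fine for a blind proposal but means your account ends where the paper's more novel contribution begins.
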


Indeed, if $\sigma_1, \sigma_2$ are as in Conjecture \ref{pspp}, we can apply Conjecture \ref{qpspp} to obtain $x \in \R/L\Z$ and $y,a,b \in \R$ with $(x,y), (x+a,y+b) \in \sigma_1(\R/L\Z)$ and $(x+a-b, y+a+b), (x-b, y+a) \in \sigma_2(\R/L\Z)$.  As $\sigma_1(\R/L\Z)$ and $\sigma_2(\R/L\Z)$ are assumed disjoint in Conjecture \ref{pspp}, we have $(a,b) \neq (0,0)$, and Conjecture \ref{pspp} follows.

We can reverse this implication in some cases:

\begin{proposition}  Conjecture \ref{pspp} implies Conjecture \ref{qpspp} in the special case that $\sigma_i(\R/L\Z)$ does not inscribe squares for $i=1,2$.
\end{proposition}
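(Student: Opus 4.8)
The plan is to use Conjecture \ref{pspp} as a black box to produce an inscribed square in $\sigma_1(\R/L\Z)\cup\sigma_2(\R/L\Z)$, and then a translation-and-limiting argument to arrange that its vertices split into two anticlockwise-consecutive pairs, one pair on $\sigma_1(\R/L\Z)$ and one on $\sigma_2(\R/L\Z)$.

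First I would dispose of the overlapping case. If $p\in\sigma_1(\R/L\Z)\cap\sigma_2(\R/L\Z)$, then the (degenerate) quadruple $(p,p,p,p)$ lies in $\overline{\mathtt{Squares}_L}$ (take $a=b=0$ in Definition \ref{sqdef}), so $(\sigma_1(\R/L\Z),\sigma_1(\R/L\Z),\sigma_2(\R/L\Z),\sigma_2(\R/L\Z))$ jointly inscribes a square by Definition \ref{joint-def}, since degenerate squares are permitted there. So I may assume $\sigma_1(\R/L\Z)$ and $\sigma_2(\R/L\Z)$ are disjoint. Lifting to equivariant simple curves $\tilde\sigma_1,\tilde\sigma_2\colon\R\to\R^2$, their images are disjoint properly embedded lines, each separating $\R^2$ into an ``upper'' and a ``lower'' component; by connectedness one of the curves lies entirely in the upper component of the other, and after possibly swapping $\sigma_1$ and $\sigma_2$ I may assume $\sigma_1$ lies ``above'' $\sigma_2$. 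In particular $\mathtt{Cyl}_L$ splits into the region above $\sigma_1$, the open annulus between the two curves, and the region below $\sigma_2$, and $\sigma_1(\R/L\Z),\sigma_2(\R/L\Z)$ are contained in bounded horizontal strips.

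Next I would apply Conjecture \ref{pspp} to $(\sigma_1,\sigma_2)$ to obtain a non-degenerate square $S$ with all four vertices on $\sigma_1(\R/L\Z)\cup\sigma_2(\R/L\Z)$. As the two sets are disjoint, each vertex lies on exactly one curve, and they cannot all lie on the same curve, since that would exhibit $\sigma_1(\R/L\Z)$ or $\sigma_2(\R/L\Z)$ inscribing a square, contrary to hypothesis. Reading the vertices of $S$ anticlockwise gives a non-constant cyclic word in $\{\sigma_1,\sigma_2\}$; up to a cyclic relabeling of the vertices (which preserves membership in $\overline{\mathtt{Squares}_L}$ together with its orientation, as one checks directly from Definition \ref{sqdef}) and up to swapping the two curves, the possibilities are: (a) two anticlockwise-consecutive vertices on $\sigma_1$ and the other two on $\sigma_2$; (b) three consecutive vertices on one curve and the fourth on the other; or (c) the vertices alternating between the two curves. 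Case (a) already finishes the proof, since after the relabeling such an $S$ directly witnesses that $(\sigma_1(\R/L\Z),\sigma_1(\R/L\Z),\sigma_2(\R/L\Z),\sigma_2(\R/L\Z))$ jointly inscribes a square.

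So the heart of the matter is to rule out cases (b) and (c), i.e.\ to deduce case (a) from the mere existence of a mixed inscribed square. Here I would exploit that Conjecture \ref{pspp} applies equally to every translated pair $(\sigma_1,\ \sigma_2-(0,H))$ with $H\ge0$ --- still disjoint simple closed curves homologous to $\mathtt{Graph}_{0,L}$, with $\sigma_2-(0,H)$ again not inscribing a square --- and that for $H$ large the only admissible distribution is (a): three vertices of a square determine the fourth, and if three vertices lie in a fixed bounded horizontal strip (the vertical extent of $\sigma_1(\R/L\Z)$ or of $\sigma_2(\R/L\Z)-(0,H)$, which is bounded independently of $H$), then all four are confined to a set of diameter $O(1)$, so the fourth vertex cannot also lie near the height $-H$ of the other curve once $H$ is large; an elementary estimate on $y$-coordinates rules out (c) likewise. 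Hence for all large $H$ the pair $(\sigma_1,\sigma_2-(0,H))$ has a block-type inscribed square, i.e.\ $(\sigma_1(\R/L\Z),\sigma_1(\R/L\Z),\sigma_2(\R/L\Z)-(0,H),\sigma_2(\R/L\Z)-(0,H))$ jointly inscribes a square. It remains to descend in $H$ to $H=0$. The set of $H\ge 0$ for which $(\sigma_1,\sigma_2-(0,H))$ admits a block-type inscribed square is closed (inscribed squares stay in a compact set, and a limiting square cannot degenerate to a point, since that point would lie in the empty set $\sigma_1(\R/L\Z)\cap(\sigma_2(\R/L\Z)-(0,H))$), and likewise the set of $H$ admitting a ``bad'' (non-block) mixed inscribed square is closed; by Conjecture \ref{pspp} together with the no-self-inscription hypothesis one of these occurs for every $H\ge 0$. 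I expect the main obstacle to lie precisely in combining these two closedness statements with the way the inscribed squares vary with $H$ --- for instance by following a connected component of the parametrized family $\{(H,S): S\text{ inscribed in }\sigma_1(\R/L\Z)\cup(\sigma_2(\R/L\Z)-(0,H))\}$ --- so as to conclude that a block-type inscribed square survives all the way down to $H=0$, which then yields the desired joint inscription for $(\sigma_1,\sigma_2)$ and completes the proof.
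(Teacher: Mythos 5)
Your opening moves mirror the paper's: translate $\sigma_2$ far away vertically so that the two curves are disjoint, apply Conjecture \ref{pspp} to the shifted pair, and argue by a bounded-strip estimate on $y$-coordinates that the resulting square must have a block-type vertex distribution (two consecutive vertices on each curve) rather than a $3$+$1$ or alternating distribution. That part is fine, and matches the paper.

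The gap is in the closing move. You try to descend in the shift parameter $H$ from large values to $H=0$ by a closedness/connectedness argument, and you acknowledge you do not see how to complete it. This approach is in fact unlikely to work with Conjecture \ref{pspp} available only as a black box: that conjecture gives existence of an inscribed square at each $H$, but no degree-theoretic or connectivity structure on the family of inscribed squares, so there is nothing that forces a block-type square to ``survive'' down to $H=0$. (Indeed, a non-degenerate block-type square at $H=0$ might simply not exist; only a degenerate jointly inscribed square is guaranteed by Conjecture \ref{qpspp}, which is precisely why that conjecture is stated with $\overline{\mathtt{Squares}_L}$ rather than $\mathtt{Squares}_L$.)

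The paper avoids the descent entirely by a small but crucial observation that you missed. Take the shift to be $(0,mL)$ with $m$ a large integer (a multiple of the cylinder period $L$). Having found a block-type square
$$((x,y),\ (x+a,y+b),\ (x+a-b,y+a+b),\ (x-b,y+a)) \in \mathtt{Squares}_L$$
with the first two vertices on $\sigma_1(\R/L\Z)$ and the last two on $\sigma_2(\R/L\Z)+(0,mL)$, simply replace $a$ by $a-mL$. Because the $x$-coordinate lives in $\R/L\Z$ and $mL\equiv 0\pmod{L}$, the $x$-coordinates of all four vertices are unchanged; only the $y$-coordinates of the third and fourth vertices drop by $mL$, landing exactly on $\sigma_2(\R/L\Z)$. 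The result is a point of $\overline{\mathtt{Squares}_L}$ (possibly degenerate, since $a-mL$ and $b$ may both vanish) lying in $\sigma_1\times\sigma_1\times\sigma_2\times\sigma_2$, which is precisely a joint inscription. No limit in $H$ is needed, and the allowance of degenerate squares in Definition \ref{joint-def} is used essentially. Until you make this replacement-of-$a$ observation (or supply an alternative completion of the descent), the proof is not complete.
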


Note that the hypothesis that $\sigma_i(\R/L\Z)$ does not inscribe squares is satisfied in many cases; for instance, by modifying the proof of Lemma \ref{inscribe} we see that this is the the case if $\sigma_i$ is the graph of a $C$-Lipschitz function for some $C < \tan \frac{3\pi}{8}$.

\begin{proof}  Let $\sigma_1, \sigma_2$ be as in Conjecture \ref{qpspp}, and assume that $\sigma_1(\R/L\Z)$ and $\sigma_2(\R/L\Z)$ do not separately inscribe squares.  Let $m$ be a sufficiently large natural number, then $\sigma_1(\R/L\Z)$ and $\sigma_2(\R/L\Z) + (0,mL)$ will be disjoint.  Applying Conjecture \ref{pspp}, we may find a square
$$((x,y), (x+a, y+b), (x+a-b, y+a+b), (x-b, y+a)) \in \mathtt{Squares}_L$$
inscribed in $\sigma_1(\R/L\Z) \cup (\sigma_2(\R/L\Z) + (0,mL))$.  In particular we have
$$ y, y+b, y+a+b, y+a \in [-C,C] \cup [mL-C, mL+C]$$
for some $C>0$ independent of $m$.  If $m$ is large enough, this forces the quadruple $(y,y+b,y+a+b,y+a)$ to be of the form $(O(C), O(C), O(C), O(C))$, the form $(mL+O(C), mL+O(C), mL+O(C), mL+O(C))$, or some cyclic permutation of $(O(C), O(C), mL+O(C), mL+O(C))$.  In the first case, we have a square inscribed in $\sigma_1(\R/L\Z)$, and in the second case we have (after translation by $(0,mL)$) a square inscribed by $\sigma_2(\R/L\Z)$; both these cases are ruled out by hypothesis.  Thus, after cyclic permutation, we may assume that
$$ (y, y+b, y+a+b, y+a) = (O(C), O(C), mL+O(C), mL+O(C))$$
which implies that the (possibly degenerate) square
$$((x,y), (x+a-mL, y+b), (x+a-b-mL, y+a+b), (x-b, y+a-mL)) \in \overline{\mathtt{Squares}_L}$$
is jointly inscribed by $(\sigma_1(\R/L\Z), \sigma_1(\R/L\Z), \sigma_2(\R/L\Z), \sigma_2(\R/L\Z))$, giving Conjecture \ref{qpspp} in this case.
\end{proof}

Because the squares in Definition \ref{joint-def} are now permitted to be degenerate, Conjecture \ref{qpspp} enjoys good convergence properties with respect to limits:

\begin{proposition}[Stability of not jointly inscribing squares]\label{lim}  Let $L > 0$.  Let $\sigma_{1,n}, \sigma_{2,n} \colon \R/L\Z \to \mathtt{Cyl}_L$ be sequences of simple closed curves which converge uniformly to simple closed curves $\sigma_1, \sigma_2 \colon \R/L\Z \to \mathtt{Cyl}_L$ respectively as $n \to \infty$.  If each of the quadruples 
$$(\sigma_{1,n}(\R/L\Z), \sigma_{1,n}(\R/L\Z), \sigma_{2,n}(\R/L\Z), \sigma_{2,n}(\R/L\Z))$$ 
jointly inscribe a square, then so does 
$$(\sigma_1(\R/L\Z), \sigma_1(\R/L\Z), \sigma_2(\R/L\Z), \sigma_2(\R/L\Z)).$$
\end{proposition}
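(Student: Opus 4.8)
The plan is a direct compactness argument, and the whole point is that Definition \ref{joint-def} allows the jointly inscribed square to be degenerate, so that no positive lower bound on the sidelength has to survive the limiting process.

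First I would unpack the hypothesis: for each $n$ there exist $x_n \in \R/L\Z$ and $y_n, a_n, b_n \in \R$ such that $(x_n,y_n)$ and $(x_n+a_n, y_n+b_n)$ lie in $\sigma_{1,n}(\R/L\Z)$, while $(x_n+a_n-b_n, y_n+a_n+b_n)$ and $(x_n-b_n, y_n+a_n)$ lie in $\sigma_{2,n}(\R/L\Z)$; choose also parameters $s_n, s_n', u_n, u_n' \in \R/L\Z$ with $\sigma_{1,n}(s_n) = (x_n,y_n)$, $\sigma_{1,n}(s_n') = (x_n+a_n, y_n+b_n)$, $\sigma_{2,n}(u_n) = (x_n+a_n-b_n, y_n+a_n+b_n)$, $\sigma_{2,n}(u_n') = (x_n-b_n, y_n+a_n)$. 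Next I would show the data $(x_n, y_n, a_n, b_n)$ is bounded. Since $\sigma_{1,n} \to \sigma_1$ and $\sigma_{2,n} \to \sigma_2$ uniformly and $\sigma_1,\sigma_2$ have compact image, all the sets $\sigma_{i,n}(\R/L\Z)$ lie in a common bounded region of $\mathtt{Cyl}_L$ for $n$ large. Then $x_n$ already ranges in the compact space $\R/L\Z$; boundedness of $y_n$ comes from $(x_n,y_n)\in\sigma_{1,n}(\R/L\Z)$; boundedness of $b_n$ from $(x_n+a_n,y_n+b_n)\in\sigma_{1,n}(\R/L\Z)$ bounding $y_n+b_n$; and boundedness of $a_n$ from $(x_n-b_n,y_n+a_n)\in\sigma_{2,n}(\R/L\Z)$ bounding $y_n+a_n$.

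Passing to a subsequence, I may then assume $x_n \to x$, $y_n \to y$, $a_n \to a$, $b_n \to b$, and (after finitely many further subsequence extractions) $s_n \to s$, $s_n' \to s'$, $u_n \to u$, $u_n' \to u'$ in $\R/L\Z$. For each vertex a two-term estimate of the form
\[ |\sigma_{1,n}(s_n) - \sigma_1(s)| \le \sup_{t}|\sigma_{1,n}(t) - \sigma_1(t)| + |\sigma_1(s_n) - \sigma_1(s)| \longrightarrow 0, \]
using uniform convergence together with continuity of $\sigma_1$, gives $\sigma_1(s) = (x,y)$; likewise $\sigma_1(s') = (x+a,y+b)$, $\sigma_2(u) = (x+a-b,y+a+b)$, and $\sigma_2(u') = (x-b,y+a)$. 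Hence $(x,y),(x+a,y+b)\in\sigma_1(\R/L\Z)$ and $(x+a-b,y+a+b),(x-b,y+a)\in\sigma_2(\R/L\Z)$, which is precisely the statement that $(\sigma_1(\R/L\Z),\sigma_1(\R/L\Z),\sigma_2(\R/L\Z),\sigma_2(\R/L\Z))$ jointly inscribes a square in the sense of Definition \ref{joint-def}.

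I do not expect any genuine obstacle here beyond routine bookkeeping with subsequences; the only conceptual point worth emphasising is that the argument never needs to keep $(a,b)$ away from $(0,0)$ — the limiting square is permitted to be degenerate, and this is exactly why the stability statement holds for Conjecture \ref{qpspp} but fails for the non-quadripartite Conjecture \ref{spp} and Conjecture \ref{pspp}.
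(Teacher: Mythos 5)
Your proof is correct and takes essentially the same compactness approach as the paper: both extract a convergent (sub)sequence of jointly inscribed quadruples and pass to the limit using the uniform convergence, the only difference being that the paper does this in one step in $\mathtt{Cyl}_L^4$ while you track the coordinates $(x_n,y_n,a_n,b_n)$ and parameters $(s_n,s_n',u_n,u_n')$ explicitly.
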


It is possible to weaken the requirement of uniform convergence (for instance, one can just assume pointwise convergence if the curves $\sigma_{1,n}, \sigma_{2,n}$ are uniformly bounded), but we will not need to do so here.

\begin{proof}  By hypothesis, one can find a sequence $p_n \in \overline{\mathtt{Squares}_L}$ such that $p_n \in \sigma_{1,n}(\R/L\Z) \times \sigma_{1,n}(\R/L\Z) \times \sigma_{2,n}(\R/L\Z) \times \sigma_{2,n}(\R/L\Z)$ for all $n$.  As $\sigma_{1,n}, \sigma_{2,n}$ converge uniformly to $\sigma_1,\sigma_2$, the $p_n$ are bounded and thus have at least one limit point $p$, which must lie in both $\overline{\mathtt{Squares}_L}$ and $\sigma_{1}(\R/L\Z) \times \sigma_{1}(\R/L\Z) \times \sigma_{2}(\R/L\Z) \times \sigma_{2}(\R/L\Z)$, giving the claim.
\end{proof}

As one application of this proposition, we have

\begin{corollary}\label{poly}  In order to prove Conjecture \ref{qpspp}, it suffices to do so in the case that the simple closed curves $\sigma_1, \sigma_2 \colon \R/L\Z \to \mathtt{Cyl}_L$ are polygonal paths.
\end{corollary}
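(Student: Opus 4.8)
The plan is a uniform approximation argument feeding into Proposition \ref{lim}. I would assume Conjecture \ref{qpspp} has been verified for polygonal paths, take arbitrary simple closed curves $\sigma_1, \sigma_2 \colon \R/L\Z \to \mathtt{Cyl}_L$ homologous to $\mathtt{Graph}_{0,L}$, and reduce to exhibiting, for each natural number $n$, simple closed \emph{polygonal} curves $\sigma_{1,n}, \sigma_{2,n} \colon \R/L\Z \to \mathtt{Cyl}_L$, each homologous to $\mathtt{Graph}_{0,L}$, with $\sigma_{i,n} \to \sigma_i$ uniformly as $n\to\infty$. Granting this, the polygonal case of Conjecture \ref{qpspp} shows each quadruple $(\sigma_{1,n}(\R/L\Z), \sigma_{1,n}(\R/L\Z), \sigma_{2,n}(\R/L\Z), \sigma_{2,n}(\R/L\Z))$ jointly inscribes a square, and Proposition \ref{lim} transports this conclusion to the limit, giving Conjecture \ref{qpspp} for $\sigma_1, \sigma_2$. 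Note that no disjointness of $\sigma_1, \sigma_2$ is required here, unlike in Conjecture \ref{pspp}, so this reduction works for all curves covered by Conjecture \ref{qpspp}.

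Thus everything reduces to the following standard fact, to be applied to $\sigma = \sigma_i$: \emph{every simple closed curve $\sigma \colon \R/L\Z \to \mathtt{Cyl}_L$ is a uniform limit of simple closed polygonal curves, which may moreover be taken homologous to $\sigma$.} The homology assertion is immediate: any two maps $\R/L\Z \to \mathtt{Cyl}_L$ that are uniformly within $L/2$ of each other are homotopic (lift to $\R^2$ and apply the $L\Z$-equivariant straight-line homotopy), hence homologous, so a sufficiently close polygonal approximant is automatically homologous to $\sigma$ and hence to $\mathtt{Graph}_{0,L}$. For the approximation itself I would argue as in the proof of Lemma \ref{repo}: given $\eta>0$, uniform continuity and the simplicity of $\sigma$ together with compactness of $\R/L\Z$ furnish a separation constant $c(\eta)>0$ such that parameters at $\R/L\Z$-distance at least $\eta$ always map to $\sigma$-values at distance at least $c(\eta)$; one then inscribes the polygonal curve $\sigma_\eta$ joining consecutive sample points along a partition of $\R/L\Z$ into arcs whose $\sigma$-images have diameter much smaller than $c(\eta)$ and smaller than $\eta$. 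Each edge of $\sigma_\eta$ has tiny diameter and lies within $\eta$ of $\sigma$; edges coming from arcs at $\R/L\Z$-distance at least $\eta$ apart lie in disjoint balls of radius $\ll c(\eta)$ and so cannot meet; and adjacent edges meet only at their common vertex after putting the sample points in general position by an arbitrarily small perturbation. The remaining pairs of edges are those coming from parameter-close arcs, and for these one invokes the classical refinement: subdividing further so that the finitely many transverse self-crossings of $\sigma_\eta$ lie in pairwise disjoint small balls, and resolving each crossing by the local detour consistent with the embedding $\sigma$, produces a genuinely simple polygonal curve still within $\eta$ of $\sigma$. Taking $\eta = 1/n$ yields $\sigma_{i,n}$.

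The substantive point — and the one I expect to require the most care — is exactly the last step: ensuring the polygonal approximant can be taken \emph{simple}. The ``far apart'' portions of $\sigma$ are harmless thanks to the quantitative separation estimates already exploited in Lemma \ref{repo}; the real work is local, near places where $\sigma$ nearly doubles back on itself, and amounts to the classical theorem that a simple closed curve admits simple polygonal approximations (which one may also simply cite). Everything else in the argument — preservation of the homology class under uniform approximation, and the passage to the limit — is routine, the latter being precisely the content of Proposition \ref{lim}.
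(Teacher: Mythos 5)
Your proposal is correct and follows essentially the same route as the paper: reduce via Proposition \ref{lim} to the statement that any simple closed curve in $\mathtt{Cyl}_L$ homologous to $\mathtt{Graph}_{0,L}$ can be uniformly approximated by simple closed polygonal curves in the same homology class, using the same quantitative separation estimates (a lower bound on $d_{\mathtt{Cyl}_L}(\gamma(t),\gamma(t'))$ in terms of $d_{\R/L\Z}(t,t')$) to localise the self-intersections of the inscribed polygonal path to parameter-close pairs. The one place where you diverge is in making the approximant simple: you propose resolving the finitely many transverse crossings by local detours that mimic the embedding of $\sigma$ (or simply citing the classical fact), whereas the paper runs a greedy algorithm that iteratively deletes entire small loops — identifying disjoint intervals $I_j$ in $\R/L\Z$ on which the polygonal path returns to its starting point and then replacing the path over slightly larger intervals $\tilde I_j$ by straight segments. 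The loop-removal route is a bit more robust: it avoids the potential delicacy you yourself flag (determining a consistent local detour where $\sigma$ nearly doubles back, and ensuring resolutions do not cascade into new crossings), since deleting a whole loop can only decrease the edge count and never introduces new intersections. Either route works; the paper's is just more self-contained.
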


This can be compared with the situation with Conjecture \ref{spp}, which is known to be true for polygonal paths, but for which one cannot take limits to conclude the general case, due to the possibility of the inscribed squares degenerating to zero.

\begin{proof}  By Proposition \ref{lim}, it suffices to show that any simple closed curve $\gamma \colon \R/L\Z \to \mathtt{Cyl}_L$ can be uniformly approximated to any desired accuracy $O(\eps)$ for $\eps>0$ by a simple polygonal closed curve $\tilde \gamma \colon \R/L\Z \to \mathtt{Cyl}_L$ (note from a winding number argument that if $\gamma$ is homologous to $\mathtt{Graph}_{0,L}$, then $\tilde \gamma$ will be also if $\eps$ is small enough).  By uniform continuity, there exists a natural number $N$ such that $d_{\mathtt{Cyl}_L}(\gamma(t),\gamma(t')) \leq \eps$ whenever $d_{\R/L\Z}(t,t') \leq L/N$, where $d_{\mathtt{Cyl}_L}, d_{\R/L\Z}$ denote the Riemannian distance functions on $\mathtt{Cyl}_L, \R/L\Z$ respectively; as $\gamma$ is simple and continuous, a compactness argument shows that there also exists $0 < \delta < \eps$ such that $d_{\mathtt{Cyl}_L}(\gamma(t), \gamma(t')) > 4 \delta$ whenever $d_{\R/L\Z}(t,t') \geq L/N$.   Finally, by uniform continuity again, there exists a natural number $M \geq N$ such that $d_{\mathtt{Cyl}_L}(\gamma(t), \gamma(t')) \leq \delta$ whenever $d_{\R/L\Z}(t,t') \leq L/M$.

Let $\gamma_1 \colon \R/L\Z \to \R^2$ be the polygonal path such that $\gamma_1(jL/M) \coloneqq \gamma(jL/M)$ for every $j \in \Z/M\Z$, with $\gamma_1$ linear on each interval $jL/M + [0, 1/M]$.  From the triangle inequality we see that $d_{\mathtt{Cyl}_L}(\gamma_1(t), \gamma(t)) \leq 2\delta$ for all $t \in \R$.  Unfortunately, $\gamma_1$ need not be simple.  However, if $t,t'$ are such that $\gamma_1(t) = \gamma_1(t')$, then by the triangle inequality we have $d_{\mathtt{Cyl}_L}(\gamma(t), \gamma(t')) \leq 4\delta$, and hence $d_{\R/L\Z}(t,t') \leq L/N$.  Using a greedy algorithm to iteratively remove loops from the polygonal path $\gamma_1$ (with each removal decreasing the number of edges remaining in $\gamma_1$ in a unit period), we may thus find a finite family $I_1,\dots,I_k$ of disjoint closed intervals in $\R/L\Z$, each of length at most $L/N$, such that paths $\gamma_1|_{I_j} \colon I_j \to \mathtt{Cyl}_L$ is closed for each $1 \leq j \leq k$ (i.e. $\gamma_1$ evaluates to the same point at the left and right endpoints of $I_j$), and such that $\gamma_1$ becomes simple once each of the intervals $I_j$ is contracted to a point.  Note also that all of the loops of $\gamma_1$ removed by this process have diameter $O(\eps)$.  If one then chooses a sufficiently small neighbourhood interval $\tilde I_j$ for each $I_j$, and defines $\tilde \gamma$ to equal $\gamma_1$ outside $\bigcup_{j=1}^k \tilde I_j$, and linear on each of the $\tilde I_j$, then we see that $\tilde \gamma$ is a simple $L\Z$-equivariant polygonal path that lies within $O(\eps)$ of $\gamma$, as required.
\end{proof}

We in fact believe the following stronger claim than Conjecture \ref{qpspp} to hold:

\begin{conjecture}[Area inequality]\label{area-ineq}  Let $L>0$, and let $\sigma_{1}, \sigma_{2}, \sigma_{3}, \sigma_{4} \colon \R/L\Z \to \mathtt{Cyl}_L$ be simple closed polygonal paths homologous to $\mathtt{Graph}_{0,L}$.  If $(\sigma_{1}(\R/L\Z), \sigma_{2}(\R/L\Z), \sigma_{3}(\R/L\Z), \sigma_{4}(\R/L\Z))$ does not jointly inscribe a square, then
\begin{equation}\label{area-claim}
\int_{\sigma_1} y\ dx - \int_{\sigma_2} y\ dx + \int_{\sigma_3} y\ dx - \int_{\sigma_4} y\ dx \neq 0.
\end{equation}
\end{conjecture}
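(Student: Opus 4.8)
\emph{Strategy.} I would argue by contraposition: assuming that the alternating area in \eqref{area-claim} vanishes, I would exhibit a (possibly degenerate) square jointly inscribed by $(\sigma_1,\dots,\sigma_4)$. The model is the proof of Theorem \ref{main} in Section \ref{pos-sec}. The plan is to build a closed loop of squares whose first, second and fourth vertices each run once around $\sigma_1,\sigma_2,\sigma_4$ while the third vertex is free, to feed this loop into the conserved integral of Lemma \ref{conserv}, and then to run a Stokes-theorem argument in the spirit of Lemma \ref{repo} to force the free vertex to meet $\sigma_3$.

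\emph{The construction.} Pass to $L\Z$-equivariant lifts $\tilde\sigma_i\colon\R\to\R^2$; since ``$(\sigma_1,\dots,\sigma_4)$ does not jointly inscribe a square'' is an open condition (the compact set $\sigma_1(\R/L\Z)\times\cdots\times\sigma_4(\R/L\Z)$ is disjoint from the closed set $\overline{\mathtt{Squares}_L}$), I may first perturb the polygonal data to generic position. A point $p_1$ on $\tilde\sigma_1$, a point $p_2$ on $\tilde\sigma_2$, and an integer recording how far the associated square wraps around the cylinder together determine a square in $\overline{\mathtt{Squares}_L}$ whose third and fourth vertices $q_3,q_4$ depend affinely on $p_1,p_2$; the locus $\mathcal C$ on which $q_4$ lands on $\tilde\sigma_4(\R)$ is then (generically) a finite piecewise-linear $1$-complex, and along it we obtain Lipschitz curves $\gamma_1,\gamma_2,\gamma_4$ lying on $\sigma_1,\sigma_2,\sigma_4$ together with a free curve $\gamma_3$ through $q_3$. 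If $\gamma_3$ ever meets $\sigma_3$ we have the desired jointly inscribed square, so suppose it does not. The key step is to select a \emph{closed} loop $\ell\subset\mathcal C$ along which $\gamma_1,\gamma_2,\gamma_4$ each wind exactly once around $\sigma_1,\sigma_2,\sigma_4$ and along which $\gamma_3$ stays simple. Granting this, Lemma \ref{conserv} applied to the lifted curves over one period of $\ell$ — where the boundary term $\tfrac{a^2-b^2}{2}$ returns to its starting value — gives
\[ \int_{\gamma_1}y\,dx-\int_{\gamma_2}y\,dx+\int_{\gamma_3}y\,dx-\int_{\gamma_4}y\,dx=0; \]
since $\gamma_1,\gamma_2,\gamma_4$ traverse $\sigma_1,\sigma_2,\sigma_4$ once (their excursions cancelling just as in \eqref{g2a}--\eqref{g4a}), the first, second and fourth integrals equal $\int_{\sigma_1}y\,dx$, $\int_{\sigma_2}y\,dx$, $\int_{\sigma_4}y\,dx$, and the vanishing of \eqref{area-claim} then forces $\int_{\gamma_3}y\,dx=\int_{\sigma_3}y\,dx$. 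But $\gamma_3$ and $\sigma_3$ are now two simple closed curves on $\mathtt{Cyl}_L$, each winding once; if they were disjoint they would cobound a nondegenerate annulus, and Stokes' theorem on that annulus (exactly as in the proof of Lemma \ref{repo}) would give $\int_{\gamma_3}y\,dx\neq\int_{\sigma_3}y\,dx$. Hence $\gamma_3$ meets $\sigma_3$, contradicting our assumption and establishing the contrapositive.

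\emph{The main obstacle.} The hard part is the selection of the loop $\ell$. In Theorem \ref{main} the hypothesis that the Lipschitz constants lie below $1$ makes the square-completing map a contraction, so $\mathcal C$ is literally a graph over the base interval (which produces $\ell$ for free) and Proposition \ref{pro} makes the free curve simple; without such a hypothesis $\mathcal C$ may be an intricate $1$-complex whose connected components need not individually wind once around all three curves, the free vertex need not trace a simple curve, and the $\Z$-worth of choices of how each square wraps around the cylinder adds a discrete label that must be tracked coherently along $\mathcal C$. Organising this bookkeeping seems to be essentially the content of the combinatorial Conjecture \ref{adf}; in homological language one expects the intersection number of $\sigma_1\times\sigma_2\times\sigma_3\times\sigma_4$ with $\overline{\mathtt{Squares}_L}$ in $(\mathtt{Cyl}_L)^4$ to vanish (the ``even parity'' peculiar to the periodic setting), so that \eqref{area-claim} is the genuine secondary obstruction — but converting this heuristic into a proof that the required loop $\ell$ exists whenever \eqref{area-claim} vanishes is the step I do not see how to complete.
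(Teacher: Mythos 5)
The statement you set out to prove is labeled a \emph{Conjecture} in the paper, not a theorem: the paper itself offers no proof of Conjecture \ref{area-ineq}, and the author explicitly acknowledges being unable to supply one. Your candid admission that the argument is incomplete is therefore the correct conclusion, and the gap you identify is the same one the paper leaves open.

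Indeed, your strategy closely mirrors the homological discussion at the end of Section \ref{quad}. There the paper intersects the $5$-cycle $\sigma_1\times\sigma_2\times\mathtt{Cyl}_L\times\sigma_4$ with $\overline{\mathtt{Squares}_L}$ to get a $1$-cycle $\sigma_{124}$ (your locus $\mathcal{C}$), extracts from it a simple closed polygonal path $\sigma_{124}^0$ homologous to $m\,\mathtt{Graph}_{0,L}^\Delta$ (your loop $\ell$), passes to an $m$-fold cover, and pushes forward to obtain $\tilde\gamma_3^0$. After applying the exactness of the conserved $1$-form (Remark \ref{homog}) exactly as you propose, the claim reduces to the non-vanishing in \eqref{dill-3} of the signed area of a $2$-cycle $\Omega$ with $\tilde\gamma_3^0 - \tilde\sigma_3 = \partial\Omega$. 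The point of failure is precisely the one you flag: without the small-Lipschitz hypothesis that powered Proposition \ref{pro}, the curve $\tilde\gamma_3^0$ need not be simple, and $\Omega$ can have components of both signs, so the Jordan-curve step breaks. The paper records this as an informal observation (``there is always an imbalance between the positive and negative components of $\Omega$'') which it cannot make rigorous, and then reformulates the special case $\sigma_1=\mathtt{Graph}_{0,L}$ as the combinatorial Conjecture \ref{adf}, proving only partial cases (e.g.\ $k_3=1$ and the situation of Proposition \ref{tap}). So you have reconstructed the paper's own line of attack and stopped at the same obstruction; there is no missing idea in the paper that would have let you finish.
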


Note that the $1$-form $y\ dx$ is well defined on $\mathtt{Cyl}_L$ and $\sigma_1,\sigma_2,\sigma_3,\sigma_4$ can be viewed as $1$-cycles, so the integrals in \eqref{area-claim} make sense as integration of differential forms (but one could also use Definition \ref{area-def} with the obvious modifications if desired).  One can also simplify the left-hand side of \eqref{area-claim} as
\begin{equation}\label{integra}
\int_{\sigma_1 - \sigma_2 + \sigma_3 -\sigma_4} y\ dx
\end{equation}
where $\sigma_1 - \sigma_2 + \sigma_3 -\sigma_4$ is interpreted as a $1$-cycle.  Viewed contrapositively, Conjecture \ref{area-ineq} then asserts that $(\sigma_{1}(\R/L\Z), \sigma_{2}(\R/L\Z), \sigma_{3}(\R/L\Z), \sigma_{4}(\R/L\Z))$ must inscribe a square whenever the integral \eqref{integra} vanishes.

Clearly, Conjecture \ref{qpspp} follows from Conjecture \ref{area-ineq} by first using Corollary \ref{poly} to reduce to the case where $\sigma_1,\sigma_2$ are polygonal paths, and then applying Conjecture \ref{area-ineq} with $(\sigma_1,\sigma_2,\sigma_3,\sigma_4)$ replaced by $(\sigma_1,\sigma_1,\sigma_2,\sigma_2)$.

Conjecture \ref{area-ineq} is somewhat strong compared with the other conjectures in this paper.  Nevertheless, we can repeat the arguments in Section \ref{pos-sec} to obtain some evidence for it:

\begin{theorem}\label{ook}  Conjecture \ref{area-ineq} holds when $\sigma_{2} = \mathtt{Graph}_{f_2}$ and $\sigma_{4} = \mathtt{Graph}_{f_4}$ for some $(1-\eps)$-Lipschitz functions $f_2,f_4 \colon \R/L\Z \to \mathtt{Cyl}_L$ and some $\eps>0$.
\end{theorem}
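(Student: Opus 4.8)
The plan is to argue by contraposition and to transcribe the proof of Theorem \ref{main} into the periodic, quadripartite setting. So suppose the integral on the left of \eqref{area-claim} vanishes; we will produce a (possibly degenerate) square jointly inscribed in $(\sigma_1(\R/L\Z), \sigma_2(\R/L\Z), \sigma_3(\R/L\Z), \sigma_4(\R/L\Z))$. Write $f_2, f_4 \colon \R/L\Z \to \R$ for the given $(1-\eps)$-Lipschitz functions, lift them to $L$-periodic $(1-\eps)$-Lipschitz functions on $\R$, and lift $\sigma_1$ to an $L\Z$-equivariant curve $\tilde\sigma_1 = (x,y) \colon \R \to \R^2$ (parameterised so as to be Lipschitz). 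For each $s \in \R$, the map
$$ (a,b) \mapsto \bigl( f_4(x(s)-b) - y(s),\ f_2(x(s)+a) - y(s) \bigr) $$
is a strict contraction of $\R^2$ with constant at most $1-\eps$, so the contraction mapping theorem gives a unique fixed point $(a(s), b(s))$, depending in a Lipschitz fashion on $s$, and $L$-periodic in $s$ (since $f_2, f_4$ are $L$-periodic, $x(s+L) = x(s)+L$, and $y(s+L) = y(s)$). Setting
\begin{align*}
\Gamma_1(s) &= (x(s),\ y(s)), \\
\Gamma_2(s) &= (x(s)+a(s),\ y(s)+b(s)), \\
\Gamma_3(s) &= (x(s)+a(s)-b(s),\ y(s)+a(s)+b(s)), \\
\Gamma_4(s) &= (x(s)-b(s),\ y(s)+a(s)),
\end{align*}
we obtain Lipschitz curves traversing squares that are $L\Z$-equivariant, hence descend to curves $\gamma_1,\gamma_2,\gamma_3,\gamma_4 \colon \R/L\Z \to \mathtt{Cyl}_L$ homologous to $\mathtt{Graph}_{0,L}$; here $\gamma_1 = \sigma_1$, while the fixed-point equations give $\Gamma_2(s) = (x(s)+a(s), f_2(x(s)+a(s)))$ and $\Gamma_4(s) = (x(s)-b(s), f_4(x(s)-b(s)))$, so $\gamma_2$ and $\gamma_4$ traverse $\mathtt{Graph}_{f_2} = \sigma_2$ and $\mathtt{Graph}_{f_4} = \sigma_4$ (reparameterised by $s \mapsto x(s)+a(s)$ and $s \mapsto x(s)-b(s)$, each winding once).

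Next I would prove the analogue of Proposition \ref{pro}: that $\gamma_3$ is simple. Suppose $\gamma_3(s) = \gamma_3(s')$ for distinct $s,s' \in \R/L\Z$; lifting appropriately we obtain a genuine coincidence $\Gamma_3(\hat s) = \Gamma_3(\hat s')$ in $\R^2$ with $\hat s \neq \hat s'$. In any anticlockwise square, the fourth vertex is the image of the second vertex under the clockwise rotation by $\pi/2$ about the third vertex; hence the secant through $\Gamma_2(\hat s), \Gamma_2(\hat s') \in \mathtt{Graph}_{f_2}$ is carried by that rotation to the secant through $\Gamma_4(\hat s), \Gamma_4(\hat s') \in \mathtt{Graph}_{f_4}$. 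The former secant has slope of magnitude at most $1-\eps$ (as $f_2$ is $(1-\eps)$-Lipschitz), so the latter has slope of magnitude at least $1/(1-\eps) > 1$, contradicting that $f_4$ is $(1-\eps)$-Lipschitz — unless $\Gamma_2(\hat s) = \Gamma_2(\hat s')$. In that degenerate case, comparing $\Gamma_3 - \Gamma_2 = (-b,a)$ forces $a(\hat s)=a(\hat s')$ and $b(\hat s)=b(\hat s')$, and then $\Gamma_1 - \Gamma_2 = (-a,-b)$ forces $\tilde\sigma_1(\hat s) = \tilde\sigma_1(\hat s')$, contradicting the simplicity of $\sigma_1$.

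Then I would apply the conservation identity of Lemma \ref{conserv} to $(\Gamma_1,\Gamma_2,\Gamma_3,\Gamma_4)$ over one period $[0,L]$: since $a,b$ are $L$-periodic, the right-hand side of \eqref{ident} vanishes, giving $\int_{\gamma_1} y\ dx - \int_{\gamma_2} y\ dx + \int_{\gamma_3} y\ dx - \int_{\gamma_4} y\ dx = 0$. Exactly as in the proof of Theorem \ref{main}, a change of variables (legitimate because $1$-forms on a line are exact, the reparameterisations wind once, and the integrands are $L$-periodic) yields $\int_{\gamma_1} y\ dx = \int_{\sigma_1} y\ dx$, $\int_{\gamma_2} y\ dx = \int_{\sigma_2} y\ dx$, and $\int_{\gamma_4} y\ dx = \int_{\sigma_4} y\ dx$. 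Substituting these together with the assumed vanishing of \eqref{area-claim}, we get $\int_{\gamma_3} y\ dx = \int_{\sigma_3} y\ dx$, i.e.\ $\int_{\gamma_3 - \sigma_3} y\ dx = 0$. On the other hand, if $\gamma_3(\R/L\Z)$ and $\sigma_3(\R/L\Z)$ were disjoint, these two simple closed curves, both homologous to $\mathtt{Graph}_{0,L}$, would cobound an annular region $R$ of positive measure in $\mathtt{Cyl}_L$ (one lies entirely on one side of the other, as one sees by lifting to $\R^2$, where each $L\Z$-equivariant lift is a properly embedded line separating the plane), and a cylinder version of Lemma \ref{repo} — obtained by cutting $R$ along a vertical segment into a disk and running the same polygonal-approximation argument — would give $\int_{\gamma_3 - \sigma_3} y\ dx = \pm |R| \neq 0$, a contradiction. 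Hence $\gamma_3(\R/L\Z)$ meets $\sigma_3(\R/L\Z)$ at some point $p = \gamma_3(s)$, and then $(\gamma_1(s),\gamma_2(s),\gamma_3(s),\gamma_4(s)) \in \overline{\mathtt{Squares}_L}$ has its four vertices lying in $\sigma_1(\R/L\Z), \sigma_2(\R/L\Z), \sigma_3(\R/L\Z), \sigma_4(\R/L\Z)$ respectively, so this quadruple jointly inscribes a (possibly degenerate) square, completing the contrapositive.

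The step I expect to be the main obstacle is precisely the cylinder analogue of Lemma \ref{repo}: one must show that $y\ dx$ integrates to the (signed, nonzero) area of the region trapped between two disjoint homologous simple closed curves on $\mathtt{Cyl}_L$, one of which (namely $\gamma_3$) is merely Lipschitz rather than polygonal. This needs both a Jordan-curve-type separation statement on the cylinder and a limiting argument of the kind used in the proof of Lemma \ref{repo} to pass from polygonal to rectifiable boundary; the remaining ingredients are a routine transcription of the arguments of Section \ref{pos-sec}.
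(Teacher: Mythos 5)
Your proposal follows the paper's proof step for step: the same contraction-mapping construction of $(a,b)$, the same rotation argument for the simplicity of $\gamma_3$, the same appeal to Lemma \ref{conserv} (with vanishing boundary terms by periodicity), the same reparameterisation identity $\int_{\gamma_i} y\,dx = \int_{\sigma_i} y\,dx$ for $i=1,2,4$, and the same final contradiction from $\int_{\gamma_3-\sigma_3} y\,dx = 0$ when the curves are assumed disjoint. The one place you part ways with the paper is the very step you single out as "the main obstacle," and in fact that obstacle does not arise: Conjecture \ref{area-ineq} is stated for \emph{polygonal} paths $\sigma_1,\dots,\sigma_4$, so $x_1,y_1$ and $f_2,f_4$ are all piecewise linear, which forces the fixed point $(a(t),b(t))$ to depend piecewise-linearly on $t$, and hence $\gamma_3$ is itself a simple closed \emph{polygonal} path. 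The paper therefore closes the argument with the elementary polygonal Jordan curve theorem and Stokes' theorem on $\mathtt{Cyl}_L$ directly, with no need for a Lipschitz or rectifiable cylinder analogue of Lemma \ref{repo}. Your suggested cut-and-approximate argument would also work, but is unnecessary; observing the polygonality of $\gamma_3$ makes the conclusion immediate.
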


Of course, by cyclic permutation one can replace the role of $\sigma_2,\sigma_4$ here by $\sigma_1,\sigma_3$.

\begin{proof}  Write $\sigma_1(t) \coloneqq (x_1(t), y_1(t))$.  For any $t \in \R/L\Z$, the map
$$ (a,b) \mapsto \left( f_4(x_1(t)-b) - y_1(t), f_2(x_1(t)+a) - y_1(t)\right) $$
is a contraction on $\R^2$ with constant at most $1-\eps$, hence by the contraction mapping theorem there exist unique continuous functions $a,b \colon \R/L\Z \to \R$ such that
$$ (a(t),b(t)) = \left( f_4(x_1(t)-b(t)) - y_1(t), f_2(x_1(t)+a(t)) - y_1(t)\right)$$
for all $t \in \R/L\Z$.  As $\sigma_1,\sigma_2,\sigma_4$ are polygonal paths, $f_2,f_4$ are piecewise linear functions, which implies that $a,b$ are also piecewise linear.  We then set $\gamma_1,\gamma_2,\gamma_3,\gamma_4 \colon \R/L\Z \to \mathtt{Cyl}_L$ to be the polygonal paths
\begin{align*}
\gamma_1(t) &\coloneqq \sigma_1(t) \\
\gamma_2(t) &\coloneqq \sigma_1(t) + (a(t), b(t)) \\
&= \mathtt{Graph}_{f_2}(x_1(t)+a(t))\\
\gamma_3(t) &\coloneqq \sigma_1(t) + (a(t)-b(t), a(t)+b(t)) \\
\gamma_4(t) &\coloneqq \sigma_1(t) + (-b(t), a(t)) \\
&= \mathtt{Graph}_{f_4}(x_1(t)-b(t)).
\end{align*}
Clearly $(\gamma_1,\gamma_2,\gamma_3,\gamma_4)$ traverses squares.  Applying Lemma \ref{conserv}, we conclude that
$$ \int_{\gamma_1} y\ dx - \int_{\gamma_2} y\ dx + \int_{\gamma_3} y\ dx - \int_{\gamma_4} y\ dx = 0.$$
Since $t \mapsto \mathtt{Graph}_{f_2}(x_1(t)+a(t))$ and $t \mapsto \mathtt{Graph}_{f_4}(x_1(t)-b(t))$ are homologous to $\sigma_2$ and $\sigma_4$
respectively, and all $1$-forms on curves such as $\mathtt{Graph}_{f_2}(\R/L\Z)$ or $\mathtt{Graph}_{f_4}(\R/L\Z)$ are automatically closed, we have $\int_{\gamma_i} y\ dx = \int_{\sigma_i} y\ dx$ for $i=1,2,4$.  Thus it suffices to show that
\begin{equation}\label{nul}
 \int_{\gamma_3 - \sigma_3} y\ dx \neq 0.
\end{equation}
The argument in Proposition \ref{pro} (unwrapping the curves from $\mathtt{Cyl}_L$ to $\R^2$) shows that $\gamma_3$ is simple, and the graph $\sigma_{3}$ is of course also simple.  As we are assuming $(\sigma_{1}(\R/L\Z), \sigma_{2}(\R/L\Z), \sigma_{3}(\R/L\Z), \sigma_{4}(\R/L\Z))$ to not jointly inscribe squares, $\gamma_3(\R/L\Z)$ and $\sigma_{3}(\R/L\Z)$ must stay disjoint.  The curve $\gamma_3$ is homotopic to $\sigma_1$ and thus also homologous to $\mathtt{Graph}_{0,L}$.  Applying the Jordan curve theorem, we see that the closed polygonal paths $\gamma_3, \sigma_3$ enclose some non-empty polygonal region $\Omega$ in $\mathtt{Cyl}_L$.  By Stokes' theorem, we conclude that the left-hand side of \eqref{nul} is equal to some sign times the Lebesgue measure of $\Omega$, giving \eqref{nul} as required.
\end{proof}

We also have an analogue of Corollary \ref{poly}:

\begin{proposition}[Stability of area inequality]\label{lim2}  Let $L > 0$.  Suppose that $\sigma_{1,n},\sigma_{2,n}, \sigma_{3,n}, \sigma_{4,n} \colon \R/L\Z \to \mathtt{Cyl}_L$ are simple closed polygonal paths which converge uniformly to simple closed polygonal paths $\sigma_{1}, \sigma_{2}, \sigma_{3}, \sigma_{4} \colon \R/L\Z \to \mathtt{Cyl}_L$ respectively.  If, for all sufficiently small $h \in \R$ Conjecture \ref{area-ineq} holds for each of the quadruples $(\sigma_{1,n},\sigma_{2,n}, \sigma_{3,n}, \sigma_{4,n} + (0,h))$ for all $n \geq 1$, then it also holds for $(\sigma_{1}, \sigma_{2}, \sigma_{3}, \sigma_{4})$.
\end{proposition}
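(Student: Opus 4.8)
The plan is to combine a compactness argument in the spirit of Proposition~\ref{lim} with a stability estimate for the functional $\sigma\mapsto\int_\sigma y\,dx$ on \emph{simple} closed curves. I would set $\eps_n\coloneqq\max_{1\le i\le 4}\sup_{t\in\R/L\Z}d_{\mathtt{Cyl}_L}(\sigma_{i,n}(t),\sigma_i(t))$, so that $\eps_n\to 0$, and abbreviate $I_n\coloneqq\int_{\sigma_{1,n}-\sigma_{2,n}+\sigma_{3,n}-\sigma_{4,n}}y\,dx$ and $I\coloneqq\int_{\sigma_1-\sigma_2+\sigma_3-\sigma_4}y\,dx$. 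If $(\sigma_1(\R/L\Z),\dots,\sigma_4(\R/L\Z))$ jointly inscribes a square, Conjecture~\ref{area-ineq} holds vacuously for $(\sigma_1,\dots,\sigma_4)$, so I may assume it does not; the task then becomes to show $I\ne 0$.

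First I would transfer the ``does not jointly inscribe'' property to the perturbed approximants, uniformly in the shift: there should exist $h_0>0$ and $N$ such that for all $|h|<h_0$ and all $n\ge N$, the quadruple $(\sigma_{1,n}(\R/L\Z),\sigma_{2,n}(\R/L\Z),\sigma_{3,n}(\R/L\Z),(\sigma_{4,n}+(0,h))(\R/L\Z))$ fails to jointly inscribe a square. If this failed one could extract $h_k\to 0$, $n_k\to\infty$, and points $p_k\in\overline{\mathtt{Squares}_L}$ jointly inscribed in the $k$-th perturbed quadruple; since all curves involved lie in a fixed compact subset of $\mathtt{Cyl}_L$, the $p_k$ (and the parameters of their vertices) subconverge, and because $\sigma_{i,n_k}\to\sigma_i$ uniformly and $h_k\to 0$, a limit point $p$ would be jointly inscribed in $(\sigma_1,\dots,\sigma_4)$ --- possibly as a degenerate square, which Definition~\ref{joint-def} still permits --- contradicting the assumption. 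After shrinking $h_0$ so that the hypothesis of the proposition applies for all $|h|<h_0$, Conjecture~\ref{area-ineq} would give $\int_{\sigma_{1,n}-\sigma_{2,n}+\sigma_{3,n}-(\sigma_{4,n}+(0,h))}y\,dx\ne 0$ for all $n\ge N$ and $|h|<h_0$. Since $\sigma_{4,n}$ is homologous to $\mathtt{Graph}_{0,L}$ and hence admits an $L\Z$-equivariant lift, $\int_{\sigma_{4,n}+(0,h)}y\,dx=\int_{\sigma_{4,n}}y\,dx+hL$, so this reads $I_n\ne hL$; letting $h$ range over $(-h_0,h_0)$ forces $|I_n|\ge h_0L$ for every $n\ge N$.

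It would then remain only to prove $I_n\to I$, since this yields $|I|\ge h_0L>0$. For each $i$ I would write $\int_{\sigma_{i,n}}y\,dx-\int_{\sigma_i}y\,dx=\int_{\sigma_{i,n}-\sigma_i}y\,dx$, note that $\sigma_{i,n}-\sigma_i$ is a null-homologous polygonal $1$-cycle, and represent this integral (as in Lemma~\ref{repo} and Figure~\ref{area}) as, up to a fixed sign, $\int_{\mathtt{Cyl}_L}w_{\sigma_{i,n}-\sigma_i}(z)\,dz$ for the winding-number function of the cycle. Two facts then suffice: for $z$ off both curves one has $w_{\sigma_{i,n}-\sigma_i}(z)=\pm(\chi_{\sigma_{i,n}}(z)-\chi_{\sigma_i}(z))$, where $\chi_\sigma(z)\in\{0,1\}$ records which of the two components of $\mathtt{Cyl}_L\setminus\sigma$ contains $z$ (a simple closed curve homologous to $\mathtt{Graph}_{0,L}$ separates the cylinder into exactly two pieces), so that $|w_{\sigma_{i,n}-\sigma_i}|\le 1$ almost everywhere; and if $d_{\mathtt{Cyl}_L}(z,\sigma_i)>\eps_n$ then the straight-line homotopy from $\sigma_i$ to $\sigma_{i,n}$ avoids $z$, forcing $\chi_{\sigma_{i,n}}(z)=\chi_{\sigma_i}(z)$ and hence $w_{\sigma_{i,n}-\sigma_i}(z)=0$ there. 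Thus $w_{\sigma_{i,n}-\sigma_i}$ is bounded by $1$ and supported in the $\eps_n$-neighbourhood of the fixed rectifiable curve $\sigma_i$, whose Lebesgue measure is $O(\eps_n(\operatorname{length}(\sigma_i)+\eps_n))\to 0$; hence $\int_{\sigma_{i,n}-\sigma_i}y\,dx\to 0$, and summing over $i$ gives $I_n\to I$. The hard part is precisely the bound $|w_{\sigma_{i,n}-\sigma_i}|\le 1$: one must rule out the winding-number function becoming large somewhere inside the thin tube around $\sigma_i$, and it is exactly the simplicity of \emph{both} $\sigma_{i,n}$ and $\sigma_i$ (equivalently, that each separates $\mathtt{Cyl}_L$) that keeps it bounded --- without simplicity the approximants could oscillate enough for the areas to fail to converge, which is the same obstruction that prevents deducing Conjecture~\ref{spp} for general curves from its polygonal case. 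The remaining points (the extraction of limits in the second paragraph, and the bookkeeping turning ``$I_n\ne hL$ for all $|h|<h_0$'' into ``$|I_n|\ge h_0L$'') are routine.
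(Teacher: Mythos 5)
Your skeleton matches the paper's: assume the limit quadruple does not jointly inscribe a square, use a compactness argument (the paper invokes a variant of Proposition \ref{lim} in the contrapositive, you argue it directly) to get $\eps>0$ and $N$ so that the $h$-shifted approximants do not jointly inscribe for $|h|\le\eps$, $n\ge N$, then apply the hypothesis together with $\int_{\sigma_{4,n}+(0,h)}y\,dx=\int_{\sigma_{4,n}}y\,dx+hL$ to force $|I_n|\ge L\eps$, and finally pass to the limit. Where you genuinely diverge is the last step: the paper simply writes ``taking limits as $n\to\infty$'' to get $|I|\ge L\eps$, treating $I_n\to I$ as immediate, whereas you observe (correctly) that uniform convergence of polygonal paths does not by itself give convergence of $\int y\,dx$ --- the total variation of $x_{i,n}$ is not a priori bounded --- and you supply the argument that does work: represent $\int_{\sigma_{i,n}-\sigma_i}y\,dx$ via the winding-number function of the null-homologous $1$-cycle $\sigma_{i,n}-\sigma_i$, bound it by $1$ a.e.\ using the fact that both $\sigma_{i,n}$ and $\sigma_i$ are simple and separate $\mathtt{Cyl}_L$ (so the winding number is a difference of indicator functions), show via the straight-line homotopy that it vanishes outside the $\eps_n$-tube of $\sigma_i$, and conclude $\int_{\sigma_{i,n}-\sigma_i}y\,dx=O(\eps_n\,\operatorname{length}(\sigma_i))\to 0$. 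This is a correct and, to my mind, necessary addition: it is precisely the simplicity hypothesis that saves the day, just as you say, and without an argument of this kind the paper's ``taking limits'' step is not fully justified at the stated level of generality. The remaining bookkeeping (the $\pm$ sign convention on the winding number, the passage from $I_n\ne hL$ for $|h|<h_0$ to $|I_n|\ge h_0L$, the measure estimate for the $\eps_n$-tube) is all handled correctly. In short: same route, but you have plugged a real gap the paper glosses over.
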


This proposition will be useful for placing the polygonal paths $\sigma_{1}, \sigma_{2}, \sigma_{3}, \sigma_{4}$ in ``general position''.

\begin{proof}  We can of course assume that $(\sigma_{1}(\R/L\Z), \sigma_{2}(\R/L\Z), \sigma_{3}(\R/L\Z), \sigma_{4}(\R/L\Z))$ does not jointly inscribe a square, as the claim is trivial otherwise.  By Proposition \ref{lim} applied in the contrapositive, we see that there exists an $\eps>0$ and $N \geq 1$ such that $(\sigma_{1,n}(\R/L\Z), \sigma_{1,n}(\R/L\Z), \sigma_{1,n}(\R/L\Z), \sigma_{1,n}(\R) + (0,h))$ does not jointly inscribe squares for any $|h| \leq \eps$ and $n \geq N$.  Applying the hypothesis (and shrinking $\eps$ if necessary), we conclude that
$$ \int_{\sigma_{1,n} - \sigma_{2,n} + \sigma_{3,n} - \sigma_{4,n}} y\ dx - Lh \neq 0$$
for $n \geq N$ and $|y| \leq \eps$, and hence
$$ \left|\int_{\sigma_{1,n} - \sigma_{2,n} + \sigma_{3,n} - \sigma_{4,n}} y\ dx\right| \geq L \eps;$$
taking limits as $n \to \infty$ we conclude that
$$ \left|\int_{\sigma_{1} - \sigma_{2} + \sigma_{3} - \sigma_{4}} y\ dx\right| \geq L \eps$$
giving the claim.
\end{proof}

In the remainder of this section, we discuss how to interpret\footnote{This discussion is not used elsewhere in the paper and may be safely skipped by the reader if desired.} the area inequality conjecture (Conjecture \ref{area-ineq}) using the language of homology.  Let $L > 0$, and let $\sigma_{1}, \sigma_{2}, \sigma_{3}, \sigma_{4} \colon \R/L\Z \to \mathtt{Cyl}_L$ be simple closed polygonal paths.  We say that $\sigma_1,\dots,\sigma_4$ are in \emph{general position} if the following hold for any distinct $i,j,k \in \{1,2,3,4\}$:
\begin{itemize}
\item[(i)]  For any edge $e$ of $\sigma_i$ and any edge $f$ of $\sigma_j$, the angle between the direction of $e$ and the direction of $f$ is not an integer multiple of $\frac{\pi}{4}$.
\item[(ii)]  For any vertices $u,v,w$ of $\sigma_i, \sigma_j, \sigma_k$ respectively, there does not exist a square with $u,v,w$ as three of its four vertices.
\end{itemize}
It is easy to see that one can perturb $\sigma_1,\sigma_2,\sigma_3,\sigma_4$ by an arbitrarily small amount to be in general position (e.g. a random perturbation will almost surely work); furthermore one can ensure that this general position will persist even after shifting $\sigma_4$ vertically by an arbitrary amount.  Hence by Proposition \ref{lim2}, to prove Conjecture \ref{area-ineq} it suffices to do so under the hypothesis of general position. 

The Cartesian product $\sigma_1 \times \sigma_2 \times \mathtt{Cyl}_L \times \sigma_4$ can be viewed as a (polyhedral) $5$-cycle in $\mathtt{Cyl}_L^4$; by the hypotheses of general position, this cycle intersects the oriented $4$-manifold $\overline{\mathtt{Squares}_L}$ transversely (and in a compact set), giving rise to a $1$-cycle $\sigma_{124}$ in $\overline{\mathtt{Squares}_L}$.  For $j=1,2,3,4$, let $\pi_j \colon \overline{\mathtt{Squares}_L} \to \mathtt{Cyl}_L$ be the projection map to the $j^{\operatorname{th}}$ coordinate.  Because the $5$-cycle $\sigma_1 \times \sigma_2 \times \mathtt{Cyl}_L \times \sigma_4$ is homologous to the $5$-cycle $\mathtt{Graph}_{0,L} \times \mathtt{Graph}_{0,L} \times \mathtt{Cyl}_L \times \mathtt{Graph}_{0,L}$, we see on restricting to $\overline{\mathtt{Squares}_L}$ that the $1$-cycle $\sigma_{124}$ is homologous to the $1$-cycle
\begin{equation}\label{grad}
 \mathtt{Graph}_{0,L}^\Delta \coloneqq \{ (p,p,p,p): p \in \mathtt{Graph}_{0,L}(\R/L\Z) \}
\end{equation}
(with the usual orientation), and hence the pushforwards $\gamma_j \coloneqq (\pi_j)_* \sigma_{124}$ (which are polygonal $1$-cycles on $\mathtt{Cyl}_L$) are homologous to $\mathtt{Graph}_{0,L}$ and thus to $\sigma_j$ for $j=1,2,3,4$.  For $j=1,2,4$, $\gamma_j$ takes values in the curve $\sigma_j(\R/L\Z)$; as $y\ dx$ is closed on that curve, we thus have
$$ \int_{\sigma_{124}} \pi_j^*(y\ dx) = \int_{(\pi_j)_* \sigma_{124}} y\ dx = \int_{\sigma_j} y\ dx$$
for $j=1,2,4$.  On the other hand, from Remark \ref{homog} (adapted to the cylinder $\mathtt{Cyl}_L$ in the obvious	 fashion), the $1$-form
\begin{equation}\label{exact}
 \pi_1^*(y\ dx) - \pi_2^*(y\ dx) + \pi_3^*(y\ dx) - \pi_4^*(y\ dx)
\end{equation}
is exact on $\overline{\mathtt{Squares}_L}$, and hence
$$ \int_{\sigma_{124}} \pi_1^*(y\ dx) - \pi_2^*(y\ dx) + \pi_3^*(y\ dx) - \pi_4^*(y\ dx) = 0.$$
Putting all this together, we see that the claim \eqref{area-claim} can be rewritten as
\begin{equation}\label{dill}
 \int_{\gamma_3 - \sigma_{3}} y\ dx \neq 0.
\end{equation}
Meanwhile, the hypothesis that $(\sigma_1(\R/L\Z), \sigma_2(\R/L\Z), \sigma_3(\R/L\Z),\sigma_4(\R/L\Z))$ do not jointly inscribe squares is equivalent to the assertion that the $1$-cycles $\gamma_3$ and $\sigma_3$ are disjoint.

The $4$-manifold $\overline{\mathtt{Squares}_L}$ is homeomorphic to $\R/L\Z \times \R^3$, and so its first homology is generated by $\mathtt{Graph}_{0,L}^\Delta$.  One can decompose the $1$-cycle $\sigma_{124}$ as an integer linear combination of finitely many closed polygonal curves in $\overline{\mathtt{Squares}_L}$ (which are allowed to intersect each other); as $\sigma_{124}$ is homologous to $\mathtt{Graph}_{0,L}^\Delta$, one of these curves, call it $\sigma_{124}^0 \colon \R/L\Z \to \overline{\mathtt{Squares}_L}$, must be homologous to $m \mathtt{Graph}_{0,L}^\Delta$ for some nonzero integer $m$, thus it obeys the equivariance $\sigma_{124}^0(t+L) = \gamma_{124}^0(t) + (mL,0)$.  By reversing orientation we may assume $m$ is positive.  

We now lift the cylinder $\mathtt{Cyl}_L$ up to the larger cylinder $\mathtt{Cyl}_{mL}$, which is an $m$-fold cover of the original cylinder; one can similarly lift $\overline{\mathtt{Squares}_L}$ to the $m$-fold cover $\overline{\mathtt{Squares}_{mL}}$.  The curves $\sigma_j: \R/L\Z \to \mathtt{Cyl}_L$ lift to curves $\tilde \sigma_j: \R/mL\Z \to \mathtt{Cyl}_{mL}$.  The $1$-cycle $\sigma_{124}$ lifts to a $1$-cycle $\tilde \sigma_{124}$ homologous to $\mathtt{Graph}_{0,mL}^\Delta$; meanwhile, the curve $\sigma_{124}^0$ lifts to $m$ copies of a curve $\tilde \sigma_{124}^0$ homologous to $\mathtt{Graph}_{0,mL}^\Delta$ (and parameterised by $\R/mL\Z$), and contained (as a set) in $\tilde \sigma_{124}$.  We conclude that $\tilde \sigma_{124} - \tilde \sigma_{124}^0$ is a $1$-boundary, thus
$$ \tilde \sigma_{124} = \tilde \sigma_{124}^0 + \partial U$$
for some $2$-cycle $U$ in $\overline{\mathtt{Squares}_{mL}}$.  We can then define curves $\tilde \gamma_j^0 \colon \R/mL\Z \to \mathtt{Cyl}_{mL}$ for $j=1,2,3,4$ by $\tilde \gamma_j^0 \coloneqq \pi_j \circ \tilde \sigma_{124}^0$; these curves are the analogues of the curves $\gamma_1,\gamma_2,\gamma_3,\gamma_4$ from Section \ref{pos-sec}.  As the $1$-form $y\ dx$ is closed on the curves $\sigma_j(\R/L\Z)$, as well as their lifts $\tilde \sigma_j(\R/mL\Z)$ to $\mathtt{Cyl}_{mL}$, we have
$$ \int_{\partial U} \pi_j^*(y\ dx) = \int_{\partial (\pi_j)_* U} y\ dx = 0$$
for $j=1,2,4$, while from the exact nature of \eqref{exact} gives
$$ \int_{\partial U} \pi_1^*(y\ dx) - \pi_2^*(y\ dx) + \pi_3^*(y\ dx) - \pi_4^*(y\ dx) = 0$$
and hence
$$ \int_{\partial (\pi_3)_* U} y\ dx = 0.$$
Hence one can also express \eqref{dill} as
\begin{equation}\label{dill-2}
 \int_{\tilde \gamma_3^0 - \tilde \sigma_{3}} y\ dx \neq 0.
\end{equation}
The $1$-cycle $\tilde \gamma_3^0 - \tilde \sigma_{3}$ is homologous to $\mathtt{Graph}_{0,mL} - \mathtt{Graph}_{0,mL} = 0$ and thus can be expressed as a $1$-boundary
$\tilde \gamma_3^0 - \tilde \sigma_3 = \partial \Omega$ for some $2$-cycle $\Omega$ in $\mathtt{Cyl}_{mL}$.  By Stokes' theorem, \eqref{dill-2} can now be expressed as
\begin{equation}\label{dill-3}
 -\int_{\Omega}\ dx \wedge dy \neq 0.
\end{equation}
In the case when the $\sigma_2, \sigma_4$ were graphs of Lipschitz functions of constant less than $1$, the closed path $\tilde \gamma_{3}^0$ was necessarily simple (and one could take $m=1$); if $(\sigma_1(\R/L\Z),\sigma_2(\R/L\Z),\sigma_3(\R/L\Z),\sigma_4(\R/L\Z))$ did not jointly inscribe squares, then $\tilde \gamma_3^0$ avoided $\sigma_3$, and so by the Jordan curve theorem the $2$-cycle $\Omega$ had a definite sign which yielded \eqref{dill-3} and thus \eqref{dill-2}, \eqref{dill}, \eqref{area-claim}.  Unfortunately, in the general case it is possible for the $2$-cycle $\Omega$ to contain both positive and negative components, even after stripping out the $1$-boundaries $\partial U$ from $\tilde \sigma_{124}$ and working just with $\tilde \sigma^0_{124}$.  However, from working with numerous examples, it appears to the author that there is always an imbalance between the positive and negative components of $\Omega$ that leads to the inequality \eqref{dill-3} and hence to Conjecture \ref{area-ineq}.  Unfortunately, the author was unable to locate an argument to establish this claim rigorously.

\begin{remark}  The Jordan curve theorem does imply that the simple closed curve $\tilde \sigma_{3}$ partitions the cylinder $\mathtt{Cyl}_{mL}$ into two connected components, the region ``above'' $\tilde \sigma_{3}$ (which contains all the points in $\mathtt{Cyl}_{mL}$ with sufficiently large positive $y$ coordinate) and the region ``below'' $\tilde \sigma_{3}$ (which contains all the points in $\mathtt{Cyl}_{mL}$ with sufficiently large negative $y$ coordinate).  Suppose that $(\sigma_1(\R/L\Z),\sigma_2(\R/L\Z), \sigma_3(\R/L\Z), \sigma_4(\R/L\Z))$ does not jointly inscribe a square, so that $\gamma_3$ avoids $\sigma_3$.  
If we let $K$ denote the connected component of $\tilde \gamma_3$ (viewed as a subset of $\mathtt{Cyl}_L$) that contains (the image of) $\tilde \gamma_{3}^0$, then $K$ must then either lie entirely in the region above $\tilde \sigma_{3}$ or the region below $\tilde \sigma_{3}$.  We conjecture that this determines the sign in \eqref{area-claim} (or \eqref{dill}, \eqref{dill-2}, \eqref{dill-3}).  Namely, if $K$ is in the region above $\tilde \sigma_{3}$, we conjecture that left-hand side of \eqref{area-claim} (or \eqref{dill}, \eqref{dill-2}, \eqref{dill-3}) must be strictly positive, and if $K$ is in the region below $\tilde \sigma_{3}$, then these left-hand sides must be strictly negative.  An equivalent statement of this is that if $\tilde \gamma^+_{3}, \tilde \gamma^-_{3} \colon \R/mL\Z \to \mathtt{Cyl}_{mL}$ are simple closed polygonal paths that traverse the upper and lower boundary of $K$ respectively (by which we mean the portions of the boundary of $K$ that can be connected by a path to points in $\mathtt{Cyl}_{mL}$ with arbitrarily large positive or negative $y$ coordinate respectively), then we have the inequalities
\begin{equation}\label{areo}
\int_{\tilde \gamma^-_3} y\ dx \leq \int_{\tilde \gamma^0_3} y\ dx \leq \int_{\tilde \gamma^+_3} y\ dx.
\end{equation}
This claim, which implies Conjecture \ref{area-ineq}, is an assertion about the relative sizes of the ``holes'' in $K$; see Figure \ref{Kfig}.
\end{remark}

\begin{figure} [t]
\centering
\includegraphics[width=5in]{./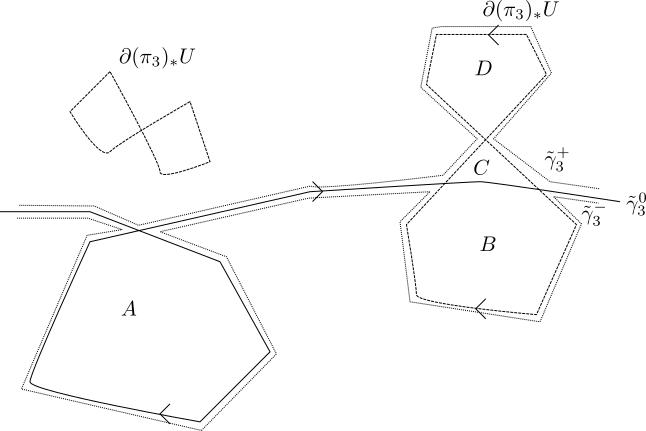}
\caption{A polygonal path $\tilde \gamma_{3}^0$ (drawn as a solid line), together with some additional $1$-boundaries $\partial (\pi_3)_* U$ (the two dashed lines).  Here, $K$ is $\tilde \gamma_{3}^0$ together with the component of $\partial (\pi_3)_* U$ that intersects $\tilde \gamma_{3}^0$.  The paths $\tilde \gamma_{3}^-$ and $\tilde \gamma_{3}^+$ are drawn as dotted lines; they have been moved slightly away from $\gamma_3^0$ for visibility.  The area inequalities \eqref{areo} can then be written as $0 \leq 2 |A| + |B| \leq |A|+|B|+|C|+|D|$, where $|A|$ denotes the Lebesgue measure of the region $A$ in the figure, and similarly for $|B|, |C|, |D|$.  Each $1$-boundary gives a zero contribution to the area under $\tilde \gamma_3$, so one also has $|B|+|C|=|D|$.  In the depicted scenario, the first area inequality is automatically true, but the second one is not necessarily so.}
\label{Kfig}
\end{figure}

\section{A combinatorial variant}

Conjecture \ref{area-ineq} appears difficult to resolve in general. However, there is a more tractable-seeming special case of Conjecture \ref{area-ineq} which captures many of the key features of the full conjecture:

\begin{conjecture}[Special case of area inequality]\label{area-1d}  Conjecture \ref{area-ineq} holds when $\sigma_1 = \mathtt{Graph}_{0,L}$.
\end{conjecture}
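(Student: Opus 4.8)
The plan is to carry out the homological reduction described at the end of Section~\ref{quad}, taking the third vertex as the free coordinate, and then to establish the sign of an area. First observe that since $\sigma_1=\mathtt{Graph}_{0,L}$ one has $\int_{\sigma_1}y\,dx=0$, so \eqref{area-claim} is the assertion $-\int_{\sigma_2}y\,dx+\int_{\sigma_3}y\,dx-\int_{\sigma_4}y\,dx\neq 0$. Using Proposition~\ref{lim2} we may place $\sigma_2,\sigma_3,\sigma_4$ in general position while keeping $\sigma_1$ fixed. Form the transverse intersection of $\sigma_1\times\sigma_2\times\mathtt{Cyl}_L\times\sigma_4$ with $\overline{\mathtt{Squares}_L}$, extract a component $\sigma_{124}^0$ homologous to $m\,\mathtt{Graph}_{0,L}^{\Delta}$ for some $m\geq 1$, pass to the $m$-fold cover $\mathtt{Cyl}_{mL}$, and let $\tilde\gamma_1^0,\tilde\gamma_2^0,\tilde\gamma_3^0,\tilde\gamma_4^0$ be the coordinate projections of the resulting loop. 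Since $\tilde\gamma_1^0$ lies on $\mathtt{Graph}_{0,mL}$ we have $\int_{\tilde\gamma_1^0}y\,dx=0$; since $y\,dx$ is closed on the circles carrying $\tilde\gamma_2^0$ and $\tilde\gamma_4^0$ we have $\int_{\tilde\gamma_j^0}y\,dx=m\int_{\sigma_j}y\,dx$ for $j=2,4$; and Lemma~\ref{conserv} (whose boundary term vanishes, $a$ and $b$ being periodic on $\R/mL\Z$) then shows that the claim is equivalent to
$$\int_{\tilde\gamma_3^0-\tilde\sigma_3}y\,dx\neq 0,$$
that is, to the nonvanishing of the signed area of a $2$-chain $\Omega$ with $\partial\Omega=\tilde\gamma_3^0-\tilde\sigma_3$.

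Next I would use two structural facts. Because the quadruple does not jointly inscribe a square, the closed polygonal path $\tilde\gamma_3^0$ is disjoint from the simple closed curve $\tilde\sigma_3$, which therefore separates $\mathtt{Cyl}_{mL}$ into an upper and a lower region, and the connected set $\tilde\gamma_3^0$ lies wholly in one of them. Moreover the reflection $(x,y)\mapsto(x,-y)$ fixes $\mathtt{Graph}_{0,L}$, carries the instance $(\sigma_1,\sigma_2,\sigma_3,\sigma_4)$ of Conjecture~\ref{area-1d} to the instance $(\mathtt{Graph}_{0,L},\sigma_4',\sigma_3',\sigma_2')$ (where $\sigma_j'$ is the reflected curve), preserves the property of not jointly inscribing a square, sends the curve $\tilde\gamma_3^0$ of one problem to that of the other, interchanges the two alternatives ``above'' and ``below'', and negates the left-hand side of \eqref{area-claim}. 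Hence we may assume that $\tilde\gamma_3^0$ lies above $\tilde\sigma_3$, and it remains to prove the single inequality $\int_{\tilde\gamma_3^0}y\,dx>\int_{\tilde\sigma_3}y\,dx$.

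If $\tilde\gamma_3^0$ happens to be simple --- which is the situation of Proposition~\ref{pro} and Theorem~\ref{ook}, arising for instance from a Lipschitz hypothesis --- this inequality is immediate from Lemma~\ref{repo}: the curves $\tilde\gamma_3^0$ and $\tilde\sigma_3$ cobound a genuine region lying strictly above $\tilde\sigma_3$, hence of strictly positive Lebesgue measure. The difficulty is the general case, where $\tilde\gamma_3^0$ can self-intersect, so that the $2$-chain $\Omega$ acquires regions weighted by winding numbers outside $\{0,1\}$; a negatively weighted region lying high above $\tilde\sigma_3$ could a priori drag $\int_{\tilde\gamma_3^0}y\,dx$ below $\int_{\tilde\sigma_3}y\,dx$, and this must be ruled out.

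To handle this I would exploit the special feature of the hypothesis $\sigma_1=\mathtt{Graph}_{0,L}$: the first vertex lies on the line $y=0$, so the $y$-coordinate of $\tilde\gamma_3^0(t)$ is exactly the \emph{sum} of the $y$-coordinates of the corresponding points of $\tilde\sigma_2$ and $\tilde\sigma_4$. Parametrising $\sigma_{124}^0$ by the finitely many combinatorial events --- passages of the configuration through vertices of $\sigma_2,\sigma_4$, and changes of the active edges --- turns $\int_{\tilde\gamma_3^0-\tilde\sigma_3}y\,dx$ into a finite sum of elementary contributions, each governed by the sign pattern of a triple sum of ordinates drawn from the vertex sets of $\sigma_2,\sigma_3,\sigma_4$; this is exactly the reduction of Conjecture~\ref{area-1d} to the combinatorial sign-pattern statement developed later in the paper. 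The step I expect to be the main obstacle is the heart of that combinatorial statement: showing that the negative contributions produced by self-intersections of $\tilde\gamma_3^0$ can never completely cancel the positive main term. This seems to call for a genuinely new combinatorial input --- for example an induction on the total number of vertices together with an extremal analysis of the worst admissible sign patterns --- and it is the point at which the present approach currently stalls.
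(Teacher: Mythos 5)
You should note that this statement is labelled a \emph{conjecture} in the paper, not a theorem, and the paper does not prove it: the author explicitly writes that he ``was unable to locate an argument to establish this claim rigorously,'' and Conjecture~\ref{area-1d} is left open (only special cases such as Theorem~\ref{koo} and Proposition~\ref{tap} are established). Your honest conclusion that the argument ``stalls'' is therefore the correct one; there is no complete proof to compare against.

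Within those limits, your reduction is sound and closely mirrors the paper's own homological reduction from Section~\ref{quad} specialised to $\sigma_1=\mathtt{Graph}_{0,L}$: general position via Proposition~\ref{lim2}, extraction of the loop $\tilde\sigma_{124}^0$ after passing to an $m$-fold cover, and the derivation of the equivalent inequality $\int_{\tilde\gamma_3^0-\tilde\sigma_3}y\,dx\neq 0$ from Lemma~\ref{conserv} with vanishing boundary terms. Your reflection-symmetry observation (which the paper does not make explicit) correctly halves the problem to the single inequality $\int_{\tilde\gamma_3^0}y\,dx>\int_{\tilde\sigma_3}y\,dx$ when $\tilde\gamma_3^0$ lies above $\tilde\sigma_3$. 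The one structural divergence is the route to the combinatorics: the paper applies a shear and a $y$-reflection to $\sigma_2,\sigma_3,\sigma_4$ to recast the joint-inscription condition as the symmetric ``triple sum'' condition $y_1+y_2+y_3=0$ of Conjecture~\ref{sai}, and then proves in Theorem~\ref{aqv} that Conjecture~\ref{sai} is equivalent to the combinatorial Conjecture~\ref{adf}; you stay in the original coordinates and identify the $y$-coordinate of $\tilde\gamma_3^0$ directly as a sum of ordinates of $\sigma_2$ and $\sigma_4$. These are two presentations of the same reduction. The step you flag as the main obstacle --- controlling the negatively-weighted regions of the $2$-chain $\Omega$ coming from self-intersections of $\tilde\gamma_3^0$ --- is precisely the content of Conjecture~\ref{adf}, which the paper resolves only in special cases (e.g.\ $k_3=1$, or the hypothesis $W_{12}^0\leq 1$ of Proposition~\ref{tap}) and otherwise leaves open. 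So your proposal contains no error, but it is, as you yourself diagnose, incomplete, and the missing step is exactly the open combinatorial heart of the paper.
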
  

Once one makes the restriction $\sigma_1 = \mathtt{Graph}_{0,L}$, Conjecture \ref{area-ineq} turns out to collapse from a two-dimensional problem to a more tractable one-dimensional one.  Indeed, suppose that the tuple $(\sigma_1(\R/L\Z), \sigma_2(\R/L\Z), \sigma_3(\R/L\Z), \sigma_4(\R/L\Z))$ does not jointly inscribing a square, with $\sigma_1 = \mathtt{Graph}_{0,L}$.  That is to say, there does not exist $x \in \R/L\Z$ and $y,a,b \in \R$ with
\begin{align*}
(x, y) &\in \sigma_1(\R/L\Z) \\
(x+a, y+b) &\in \sigma_2(\R/L\Z) \\
(x+a-b, y+a+b) &\in \sigma_3(\R/L\Z) \\
(x-b, y+a) &\in \sigma_4(\R/L\Z).
\end{align*}
The first condition $(x,y) \in \sigma_1(\R/L\Z)$ simply asserts that $y=0$.  If one now defines the linearly transformed closed polygonal curves $\tilde \sigma_2, \tilde \sigma_3, \tilde \sigma_4 \colon \R/L\Z \to \mathtt{Cyl}_L$ by the formulae
\begin{align*}
\tilde \sigma_2(t) &= (x_2(t) - y_2(t), y_2(t)) \\
\tilde \sigma_3(t) &= (x_3(t), -y_3(t)) \\
\tilde \sigma_4(t) &= (x_4(t) + y_4(t), y_4(t))
\end{align*}
where $x_i, y_i \colon \R \to \R$ are the components of $\sigma_i$ for $i=1,2,3,4$, then $\tilde \sigma_2, \tilde \sigma_3, \tilde \sigma_4$ remain simple, and (on setting $\tilde x \coloneqq x+a-b$) we see that the property of the quadruple
$$(\sigma_1(\R/L\Z), \sigma_2(\R/L\Z), \sigma_3(\R/L\Z), \sigma_4(\R/L\Z))$$ 
not jointly inscribing squares is equivalent to the non-existence of real numbers $\tilde x, a, b$ such that
\begin{align*}
(\tilde x, b) &\in \tilde \sigma_2(\R/L\Z) \\
(\tilde x, -a-b) &\in \tilde \sigma_3(\R/L\Z) \\
(\tilde x, a) &\in \tilde \sigma_4(\R/L\Z).
\end{align*}
Also, from the change of variables we see that
$$\int_{\tilde \sigma_j} y\ dx = (-1)^j \int_{ \sigma_j} y\ dx$$
for $j=2,3,4$.
Relabeling $\tilde \sigma_2, \tilde \sigma_3, \tilde \sigma_4$ as $\gamma_1, \gamma_2, \gamma_3$, and writing $b,-a-b,a$ as $y_1,y_2,y_3$ respectively, we thus see that Conjecture \ref{area-1d} is equivalent to the following more symmetric version:

\begin{conjecture}[Special case of area inequality, symmetric form]\label{sai}  Let $L>0$, and let $\gamma_1, \gamma_2, \gamma_3 \colon \R/L\Z \to \mathtt{Cyl}_L$ be simple closed polygonal paths homologous to $\mathtt{Graph}_{0,L}$, such that there does not exist points $(x,y_i) \in \gamma_i(\R/L\Z)$ with $x,y_1,y_2,y_3 \in \R$ and $y_1+y_2+y_3 = 0$.  Then one has
$$ \int_{\gamma_1+\gamma_2+\gamma_3} y\ dx \neq 0.$$
\end{conjecture}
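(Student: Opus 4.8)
The plan is to run the homological reduction already used in this section for Conjecture~\ref{area-ineq}, which in the symmetric setting of Conjecture~\ref{sai} simplifies because the relevant $1$-form vanishes \emph{identically} on the constraint locus, not merely up to an exact form. For $x\in\R/L\Z$ and $i=1,2,3$ set $A_i(x)\coloneqq\{y\in\R:(x,y)\in\gamma_i(\R/L\Z)\}$; these sets are finite for all but finitely many $x$, and the hypothesis of Conjecture~\ref{sai} is exactly that $0\notin A_1(x)+A_2(x)+A_3(x)$ for every $x$. Introduce the oriented constraint $3$-manifold
$$\mathcal T_L\coloneqq\{((x,y_1),(x,y_2),(x,y_3))\in\mathtt{Cyl}_L^3:y_1+y_2+y_3=0\},$$
perturb $\gamma_1,\gamma_3$ slightly into general position, and form the polygonal $1$-cycle $\tau\coloneqq(\gamma_1\times\mathtt{Cyl}_L\times\gamma_3)\cap\mathcal T_L$ (a transverse, compact intersection). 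Since $\gamma_1\times\mathtt{Cyl}_L\times\gamma_3$ is homologous to $\mathtt{Graph}_{0,L}\times\mathtt{Cyl}_L\times\mathtt{Graph}_{0,L}$, restricting to $\mathcal T_L$ shows $\tau$ is homologous to the diagonal generator, so the pushforward $\gamma_2'\coloneqq(\pi_2)_*\tau$ is a polygonal $1$-cycle homologous to $\mathtt{Graph}_{0,L}$, supported on $\{(x,-(y_1+y_3)):y_1\in A_1(x),\,y_3\in A_3(x)\}$.

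Two observations then finish the reduction. First, $\pi_1$ and $\pi_3$ push $\tau$ forward to cycles homologous to $\gamma_1,\gamma_3$ and supported on the circles $\gamma_1(\R/L\Z),\gamma_3(\R/L\Z)$, on which $y\,dx$ is automatically closed, while on $\mathcal T_L$ one has $\pi_1^*(y\,dx)+\pi_2^*(y\,dx)+\pi_3^*(y\,dx)=(y_1+y_2+y_3)\,dx=0$; integrating this vanishing form over $\tau$ gives $\int_{\gamma_2'}y\,dx=-\int_{\gamma_1}y\,dx-\int_{\gamma_3}y\,dx$, hence
$$\int_{\gamma_1+\gamma_2+\gamma_3}y\,dx=\int_{\gamma_2-\gamma_2'}y\,dx.$$
Second, the condition $0\notin A_1(x)+A_2(x)+A_3(x)$ says precisely that the images of $\gamma_2$ and $\gamma_2'$ are disjoint. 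As $\gamma_2-\gamma_2'$ is null-homologous it bounds a polygonal $2$-chain $\Omega$, and by Stokes' theorem~\eqref{stok} Conjecture~\ref{sai} becomes the assertion that the signed area $\int_\Omega dx\wedge dy$ is nonzero.

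When $\gamma_2'$ happens to be a simple closed curve this is immediate: two disjoint simple closed curves homologous to the generator of $\mathtt{Cyl}_L$ cobound an annular region of constant sign and strictly positive area, exactly the mechanism of Lemma~\ref{repo} and Proposition~\ref{pro}. In particular the argument goes through whenever one of $\gamma_1,\gamma_3$ (and, by the symmetry of the construction, any one of $\gamma_1,\gamma_2,\gamma_3$) is a graph $\mathtt{Graph}_h$, since then $\gamma_2'$ is the image of the other fibred curve under the homeomorphism $(x,y)\mapsto(x,-h(x)-y)$ of $\mathtt{Cyl}_L$; this recovers, in the setting of Conjecture~\ref{sai}, the kind of partial result furnished by Theorem~\ref{ook}, and in fact strengthens it since only one graph and no Lipschitz condition are needed.

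The main obstacle is the general case. Here $\gamma_2'$ is the fibrewise Minkowski sum of the two simple cycles $\gamma_1,\gamma_3$, so it will typically be a non-embedded polygonal cycle that wraps around multiply and bounds "holes'' of both signs; then $\Omega$ has cancelling positive and negative pieces and no crude area estimate is available. This is exactly the phenomenon discussed around Figure~\ref{Kfig}: one must show the imbalance between the positive and negative components of $\Omega$ is always strict, i.e.\ an area inequality of the type~\eqref{areo}. The only structural input not yet exploited is the simplicity of $\gamma_1,\gamma_3$, which constrains how the sheets of $\gamma_2'$ can interleave (just as simplicity of $\mathtt{Graph}_{\tilde f},\mathtt{Graph}_{\tilde g}$ was decisive in Proposition~\ref{pro}); I would try to use it by isolating the connected component $K$ of $\gamma_2'$ carrying a primitive generator loop, writing the remaining loops of $K$ as null-homologous, and arguing each contributes area of the favourable sign. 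I do not see how to carry this through in general, and in parallel I would attack the purely combinatorial reformulation (Conjecture~\ref{adf}, on sign patterns of triple sums), where the topology is stripped away and the cancellation becomes a bookkeeping problem about finite sets of reals.
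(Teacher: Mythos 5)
The statement you were given is Conjecture~\ref{sai}, which is \emph{open}: the paper does not prove it. The paper's treatment consists of (a) deriving it by a linear change of variables from Conjecture~\ref{area-1d}, (b) proving its equivalence to the combinatorial Conjecture~\ref{adf} (Theorem~\ref{aqv}), and (c) resolving certain special cases (Theorem~\ref{koo}, Proposition~\ref{tap}). Your proposal correctly recognizes this and does not claim a full proof, so there is no ``gap'' to fault; I will compare your reduction to the paper's framework.

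Your reduction is sound and closely parallels the paper's homological setup in Section~\ref{quad}. The key observation that $\pi_1^*(y\,dx)+\pi_2^*(y\,dx)+\pi_3^*(y\,dx)$ vanishes \emph{identically} on $\mathcal T_L$ (not merely being exact, as is the case on $\overline{\mathtt{Squares}_L}$, cf.\ Remark~\ref{homog}) is a genuine and clean simplification, and the reduction to $\int_{\gamma_2-\gamma_2'}y\,dx\neq 0$, with disjointness of $\gamma_2$ and $\gamma_2'$ read off from the hypothesis $0\notin A_1(x)+A_2(x)+A_3(x)$, is exactly right. The special case you close (one of the $\gamma_i$ a graph, so that $\gamma_2'$ is an embedded image of another under a fibrewise homeomorphism) is precisely the content of the paper's Theorem~\ref{koo}, and your argument is essentially the paper's: replace the graph by $\mathtt{Graph}_{0,L}$, reflect, and invoke the Jordan curve theorem. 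You present this as a ``strengthening'' of Theorem~\ref{ook}, but the paper already records it verbatim as Theorem~\ref{koo} (with only one graph and no Lipschitz hypothesis), so the novelty claim there should be withdrawn. You also correctly locate the obstruction in the general case — the multi-sheeted, non-embedded $\gamma_2'$ and the possible cancellation of positive and negative pieces of $\Omega$, exactly the phenomenon illustrated in Figure~\ref{Kfig} and the inequality~\eqref{areo} — and correctly identify the combinatorial reformulation, Conjecture~\ref{adf}, as the natural target. That is precisely where the paper goes: the backward direction of Theorem~\ref{aqv} carries out a reduction cognate to yours (intersecting $\gamma_1\times\gamma_2\times\gamma_3$ with the $x_1=x_2=x_3$ submanifold $V$, extracting a simple $1$-cycle $\gamma_{123}^0$, and analyzing the fibres combinatorially), and Proposition~\ref{tap} is the analogue of your ``$\gamma_2'$ bounds an annulus of definite sign'' observation, phrased via the winding numbers $W_{12}^0$. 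In short: your setup is correct, matches the paper's in spirit and in its partial results, and — like the paper — stops short of the general case, which remains open.
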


In this section we show that Conjecture \ref{sai} is equivalent to an almost purely combinatorial statement.  To formulate it, we need some definitions.  Recall that the \emph{signum function} $\operatorname{sgn}: [-\infty,+\infty] \to \{-1,+1\}$ on the extended real line $[-\infty,+\infty]$ is defined to equal $-1$ of $[-\infty,0)$, $0$ on $0$, and $+1$ on $(0,+\infty]$.

\begin{definition}[Non-crossing sums]\label{ncs-def}  Let $m=2,3$, and for each $1 \leq i \leq m$, let $y_{i,1}, y_{i,2} \in [-\infty,+\infty]$ be extended reals.  We say that the pairs $\{ y_{i,1},  y_{i,2}\}$ for $i=1,\dots,m$ have \emph{non-crossing sums} if the following axioms are obeyed:
\begin{itemize}
\item[(i)]  Either all of the $y_{i,1}, y_{i,2}$ avoid $+\infty$, or they all avoid $-\infty$. 
\item[(ii)]  For any $j_1,\dots,j_m \in \{1,2\}$, the sum $y_{1,j_1} + \dots + y_{m,j_m}$ (which is well defined by (i)) is non-zero.
\item[(iii)]  One has the cancellation
\begin{equation}\label{cancellation}
\sum_{j_1,\dots,j_m \in \{1,2\}} (-1)^{j_1+\dots+j_m} \operatorname{sgn}( y_{1,j_1} + \dots + y_{m,j_m} ) = 0.
\end{equation}
or equivalently (by (ii))
$$
\sum_{j_1,\dots,j_m \in \{1,2\}:  y_{1,j_1} + \dots + y_{m,j_m} > 0} (-1)^{j_1+\dots+j_m} = 0.
$$
That is to say, there are as many positive sums $y_{1,j_1} + \dots + y_{m,j_m} > 0$ with the index sum $j_1+\dots+j_m$ even as there are positive sums $y_{1,j_1} + \dots + y_{m,j_m} > 0$ with $j_1+\dots+j_m$ odd (and similarly with ``positive'' replaced by ``negative'').
\end{itemize}
Otherwise, we say that the pairs $\{y_{i,1}, y_{i,2}\}$ for $i=1,\dots,m$ have \emph{crossing sums}.
\end{definition}

\begin{remark}\label{inv} The notion of non-crossing sums is invariant with respect to interchanges between $y_{i,1}$ and $y_{i,2}$ for $i=1,\dots,m$, or between the pairs $\{y_{i,1},y_{i,2}\}$ for $i=1,\dots,m$, or by replacing each of the $y_{i,j}$ with their negations $-y_{i,j}$.  One could define this concept for other values of $m$ than $m=2,3$, but these are the only two values of $m$ we will need here.
\end{remark}

\begin{example}\label{ncs}  Let $a_1,a_2,b_1,b_2$ be distinct elements of $\R \cup \{-\infty\}$.  Then the pairs $\{a_1,a_2\}$ and $\{-b_1,-b_2\}$ have non-crossing sums if and only if the number of pairs $(i,j) \in \{1,2\} \times \{1,2\}$ with $a_i < b_j$ is even, thus for instance $\{a_1,a_2\}$ and $\{-b_1,-b_2\}$ will have non-crossing sums if
$$ a_1 < a_2 < b_1 < b_2$$
or
$$ a_1 < b_1 < b_2 < a_2$$
but not if
$$ a_1 < b_1 < a_2 < b_2.$$
In particular, if $-\infty < b_1 < b_2 < +\infty$, $\{ -\infty, a \}$ and $\{-b_1,-b_2\}$ have non-crossing sums if and only if $a$ lies outside of $[b_1,b_2]$.

In a more topological form: the pairs $\{a_1,a_2\}$ and $\{-b_1,-b_2\}$ have non-crossing sums iff it is possible to connect $(0,a_1)$ and $(0,a_2)$ (resp. $(0,b_1)$ and $(0,b_2)$) by a curve $\gamma_a$ (resp. $\gamma_b$) in the (one-point compactification of the) right half-plane $[0,+\infty) \times \R$, in such a manner that $\gamma_a$ and $\gamma_b$ do not cross.  See Figures \ref{cross-1}, \ref{cross-2}.
\end{example}

\begin{figure} [t]
\centering
\includegraphics[width=1.5in]{./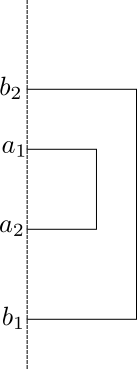}
\caption{The pairs $\{a_1, a_2\}$ and $\{-b_1,-b_2\}$ have non-crossing sums: the sums $a_1 + (- b_1), a_2 + (- b_1)$ are positive, while the sums
$a_1 + (-b_2), a_2 + (-b_2)$ are negative.  Note that the path connecting $(0,a_1)$ to $(0,a_2)$ does not cross the path connecting $(0,b_1)$ to $(0,b_2)$.}
\label{cross-1}
\end{figure}

\begin{figure} [t]
\centering
\includegraphics[width=1.5in]{./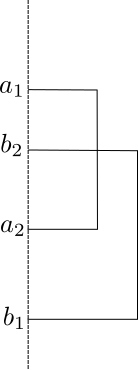}
\caption{The pairs $\{a_1, a_2\}$ and $\{-b_1,-b_2\}$ have crossing sums: the sums $a_1 + (- b_1), a_2 + (- b_1), a_1 + (-b_2)$ are positive, while the sum $a_2 + (-b_2)$ is negative.  Note that the path connecting $(0,a_1)$ to $(0,a_2)$ crosses the path connecting $(0,b_1)$ to $(0,b_2)$.}
\label{cross-2}
\end{figure}

\begin{example} The pairs $\{0,1\}, \{0,6\}, \{-5,1\}$ have non-crossing sums because there are as many positive sums $0 + 0 + 1, 0 + 6 - 5, 1 + 6 + 1$ with an even index sum as there are positive sums $1 + 0 + 1$, $1 + 6 - 5, 0 + 6 + 1$ with an odd index sum.  On the other hand, the pairs $\{-3,5\}, \{-3,5\}, \{-3,5\}$ have crossing sums because there are fewer positive sums $5+5+5$ with an even index sum than positive sums $-3+5+5, 5-3+5, 5+5-3$ with an odd index sum.

As we shall see later, the notion of $\{y_{i,1}, y_{i,2}\}$ for $i=1,2,3$ having non-crossing sums also has a topological interpretation (assuming axiom (i)), namely that there are curves $\gamma_i$ in the one-point compactification of $[0,+\infty) \times \R$ connecting $(0, y_{i,1})$ to $(0, y_{i,2})$ for $i=1,2,3$ such that there do not exist $x,y_1,y_2,y_3 \in \R$ with $y_1+y_2+y_3 = 0$ and $(x,y_i)$ in $\gamma_i$ for $i=1,2,3$.  This may help explain the terminology ``non-crossing sums''.
\end{example}

\begin{example}\label{diff}  Suppose that $0 < a < b < c$ and $x,y,z$ are real numbers such that $\{x,x+a\}, \{y,y+b\}, \{z,z+c\}$ have non-crossing sums.  The $2^3$ sums formed from this triplet may be almost completely ordered as
\begin{align*}
&x+y+z < x+a+y+z < x+y+b+z < x+a+y+b+z, x+y+z+c \\
&\quad < x+a+y+z+c < x+y+b+z+c < x+a+y+b+z+c
\end{align*}
(the reader may wish to first see this in the case $x=y=z=0$).  The sums $x+a+y+z, x+y+b+z, x+y+z+c, x+a+y+b+z+c$ have even index sum, and the other four sums have odd index sum.  Therefore, $\{x,x+a\}, \{y,y+b\}, \{z,z+c\}$ has non-crossing sums precisely when the origin $0$ falls in one of the five intervals
\begin{align*}
&(-\infty, x+y+z), \\
&(x+a+y+z, x+y+b+z), \\
&(x+a+y+b+z, x+y+z+c), \\
&(x+a+y+z+c, x+y+b+z+c), \\
&(x+a+y+b+z+c, +\infty)
\end{align*}
(with the third interval deleted if $x+y+z+c < x+a+y+b+z$).  In particular, we see that pair $\{x,x+a\}$ with the smallest difference has no influence on the sign of the triple sums, that is to say
$$ \operatorname{sgn}( x + y_2 + y_3 ) = \operatorname{sgn}( x+a + y_2 + y_3 )$$
for $y_2 \in \{y,y+b\}$ and $y_3 \in \{z,z+c\}$.  Conversely, if this pair has no influence on the sign of triple sums then the pairs $\{x,x+a\}, \{y,y+b\}, \{z,z+c\}$ have non-crossing sums.  This lack of influence by the pair with the smallest difference can thus be used as an alternate definition of non-crossing sums in the $m=3$ case (and it also works in the $m=2$ case).

One corollary of this analysis is that if $\{y,y+b\}$ has no influence on the sign of the triple sums, then neither does $\{x,x+a\}$; similarly, if $\{z,z+c\}$ has no influence on the non-crossing sums, then neither does $\{y,y+b\}$.
\end{example}

We are now ready to give the combinatorial formulation of Conjecture \ref{sai}.

\begin{conjecture}[Combinatorial formulation]\label{adf}  Let $k_1,k_2,k_3$ be odd natural numbers, and for each $i=1,2,3$, let $y_{i,1},\dots,y_{i,k_i}$ be distinct real numbers.  Adopt the convention that $y_{i,0} = y_{i,k_i+1} = -\infty$.  Assume the following axioms:
\begin{itemize}
\item[(i)]  (Non-crossing)  For any $1 \leq i \leq 3$ and $0 \leq p < q \leq k_i$ with $p,q$ the same parity, the pairs $\{ y_{i,p}, y_{i,p+1} \}$ and $\{ -y_{i,q}, -y_{i,q+1} \}$ have non-crossing sums.
\item[(ii)]  (Non-crossing sums)  For any $0 \leq p_1 \leq k_1$, $0 \leq p_2 \leq k_2$, $0 \leq p_3 \leq k_3$ with $p_1,p_2,p_3$ the same parity, the pairs $\{y_{1,p_1}, y_{1,p_1+1}\}$, $\{y_{2,p_2}, y_{2,p_2+1}\}$, $\{y_{3,p_3}, y_{3,p_3+1}\}$ have non-crossing sums.
\end{itemize}
Then one has the inequality
\begin{equation}\label{yio}
 \sum_{i=1}^3 \sum_{j=1}^{k_i} (-1)^{j-1} y_{i,j} < 0.
\end{equation}
\end{conjecture}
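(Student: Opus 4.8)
The plan is to establish \eqref{yio} by induction on the total length $k_1+k_2+k_3$, an odd integer at least $3$.

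\emph{Base case $k_1=k_2=k_3=1$.} Here Axiom (i) is vacuous, and Axiom (ii) applies (e.g. with $p_1=p_2=p_3=0$) to the three pairs $\{-\infty,y_{1,1}\}$, $\{-\infty,y_{2,1}\}$, $\{-\infty,y_{3,1}\}$, which all avoid $+\infty$. Among the eight sums in the cancellation \eqref{cancellation}, the seven that involve a $-\infty$ summand each have $\operatorname{sgn}=-1$, while $\sum_{j_1,j_2,j_3\in\{1,2\}}(-1)^{j_1+j_2+j_3}=0$; hence \eqref{cancellation} forces $\operatorname{sgn}(y_{1,1}+y_{2,1}+y_{3,1})=-1$, which is exactly \eqref{yio} in this case.

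\emph{Inductive step.} If the total exceeds $3$ then some $k_i\ge 3$; relabel so that $k_1\ge 3$. By Axiom (i) and the interval description of non-crossing sums in Example \ref{ncs}, the ``even-position'' intervals $[\min(y_{1,p},y_{1,p+1}),\max(y_{1,p},y_{1,p+1})]$ with $p=0,2,\dots,k_1-1$ form a laminar family, as do the ``odd-position'' ones with $p=1,3,\dots,k_1$; in each family exactly one interval (the one containing $-\infty$) is unbounded, so there is an innermost \emph{bounded} pair on each side. I would delete the two entries $y_{1,p},y_{1,p+1}$ of one such innermost pair and reindex the survivors, obtaining a first sequence of $k_1-2$ distinct reals with the same $-\infty$ convention, leaving sequences $2$ and $3$ untouched. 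Deleting two consecutive entries preserves the parity of the index of every surviving entry, so every pair figuring in Axioms (i)--(ii) for the reduced tuple is literally an old pair of the same parity, \emph{except} for the bridging pair $\{y_{1,p-1},y_{1,p+2}\}$ created at the deletion site. The inductive step thus reduces to two points: (a) the bridging pair still has non-crossing sums with every admissible pair from sequences $2$ and $3$ (reduced Axiom (ii)) and is laminar-compatible with the surviving first-sequence pairs (reduced Axiom (i)); and (b) the quantity $(-1)^{p-1}y_{1,p}+(-1)^{p}y_{1,p+1}$ deleted from the left side of \eqref{yio} is $\le 0$, so the reduced strict inequality upgrades to the original one. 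For (b): deleting an innermost even pair removes $y_{1,p+1}-y_{1,p}$ and deleting an innermost odd pair removes $y_{1,p}-y_{1,p+1}$, and the laminar structure constrains the orientation of the innermost pairs relative to their parent intervals (already for $k_1=3$ it forces the middle value to lie strictly between the two ends, hence never a local extremum), so I expect the freedom to choose a side always permits the deleted quantity to be $\le 0$.

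The main obstacle is (a). ``Non-crossing sums'' is a \emph{global} sign-cancellation condition over all $2^m$ sums (Axiom (iii) of Definition \ref{ncs-def}), so it does not obviously localize, and showing that the bridging pair inherits the condition from its two neighbours $\{y_{1,p-1},y_{1,p}\}$ and $\{y_{1,p+1},y_{1,p+2}\}$ looks like a genuine structural lemma rather than a routine verification; the natural tool is the ``no influence of the smallest pair'' reformulation of non-crossing sums from Example \ref{diff}, applied to the (geometrically small) innermost deleted pair, but making this work when the data $y_{i,j}$ are fixed rather than adjustable is where the real difficulty lies. Should the combinatorial route stall there, a fallback is to argue geometrically through the equivalent statement (Conjecture \ref{sai}) and to try to prove the area inequalities \eqref{areo} for $\tilde\gamma^0_3$ directly; in that approach the obstacle is the one already flagged by the author, namely that the relevant $2$-cycle may carry components of both signs, so one must exhibit a net imbalance in the favourable direction.
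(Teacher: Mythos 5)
This statement is a conjecture that the paper explicitly leaves open: the author writes that he was ``unable to locate an argument to establish this claim rigorously'' and that he does ``not have a proof of Conjecture \ref{adf} in full generality.'' What the paper supplies is the base case $k_3=1$ (via Theorem \ref{koo} and again by a direct winding-number argument), Proposition \ref{tap} (the case $W_{12}^0\le 1$), and a discussion of the difficulty --- namely, the modified winding number $W_{12}^0$ can exceed $1$ and then must be compensated by a ``closed off'' region --- which the author could only push through \emph{ad hoc} for small $k_i$.

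Your proposal is therefore not in competition with a proof in the paper, and it is also not a proof. Your base case is correct and matches the derivation of \eqref{ineq-1} in the paper. But the inductive step leaves two genuine gaps, the first of which you candidly flag yourself. Gap (a): after deleting an innermost bounded pair $\{y_{1,p},y_{1,p+1}\}$, the bridging pair $\{y_{1,p-1},y_{1,p+2}\}$ must be shown to have non-crossing sums with every admissible pair from sequences $2$ and $3$. You only know the hypotheses for the original consecutive pairs of $\sigma_1$; ``non-crossing sums'' (Definition \ref{ncs-def}) is a global sign-cancellation condition over the $2^3$ triple sums, and nothing in the axioms directly controls non-consecutive pairs such as $\{y_{1,p-1},y_{1,p+2}\}$. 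The ``smallest-gap-has-no-influence'' reformulation in Example \ref{diff} does not close this, because the deleted pair need not be the smallest of the three pairs appearing in a given instance of Axiom (ii); in fact the paper's own analysis (Proposition \ref{inclusion}(ii) and the discussion around $V_{12}^{+1}$) shows that the interaction between the gap sizes on different $\sigma_i$ is exactly where the structure becomes intricate. Gap (b): you assert that, among the innermost bounded pairs on the two parities, at least one always has the orientation making the deleted contribution $(-1)^{p-1}y_{1,p}+(-1)^p y_{1,p+1}$ nonpositive. This holds for $k_1=3$ by the ordering argument you give, but for general $k_1$ there can be several innermost pairs in each parity class nested in different ways, and the laminarity from Axiom (i) alone does not immediately force a favourable sign on any one of them; some additional argument (e.g.\ using the Jordan-curve-type statement of Lemma \ref{cjct}, which forces $W_1(y_{1,j})=\tfrac12$) would be needed. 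Until both (a) and (b) are settled, the proposal is an outline of a plausible attack rather than a proof --- which, given the paper's stated status for this conjecture, is to be expected.

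For contrast, the route the paper uses for its \emph{partial} results is different from yours: instead of peeling off pairs inductively, it defines winding numbers $W_i$ and $W_{ii'}$ directly from the sign data via the Alexander numbering rule \eqref{Wi-def}, \eqref{Wii-def}, proves a combinatorial Jordan curve theorem (Lemma \ref{cjct}), establishes the inclusion \eqref{inclusio}, and converts \eqref{yio} to an integral comparison via \eqref{fubini}--\eqref{breathe}. This is a global, non-inductive argument; the obstruction it encounters (regions where $W_{12}^0\ge 2$) is the analogue, in that language, of the difficulty you run into in (a). If you want to continue with the inductive approach, the bridging lemma in (a) should be attacked first, since without it the induction does not get off the ground even in cases where the paper's partial results already apply.
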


\begin{remark} In the language of Arn\'old, the hypothesis (i) shows that the ordering of the extended real numbers $-\infty, y_{i,1},\dots,y_{i,k_i}$ is given by the permutation of a \emph{meander} (formed by gluing together two non-crossing matchings); see \cite{meander}.
\end{remark}

The main result of this section is then

\begin{theorem}\label{aqv}  Conjecture \ref{sai} (and hence Conjecture \ref{area-1d}) is equivalent to Conjecture \ref{adf}.
\end{theorem}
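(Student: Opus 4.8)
The proof is a dictionary, obtained by slicing the cylinder $\mathtt{Cyl}_L$ along its vertical fibers $F_{x_0}=\{x=x_0\}$, $x_0\in\R/L\Z$, between the geometry of Conjecture \ref{sai} and the combinatorics of Conjecture \ref{adf}; since the paper has already shown Conjecture \ref{area-1d} and Conjecture \ref{sai} equivalent, it suffices to establish Conjecture \ref{sai} $\iff$ Conjecture \ref{adf}.

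\textbf{From curves to tuples.} Given simple closed polygonal paths $\gamma_1,\gamma_2,\gamma_3$ homologous to $\mathtt{Graph}_{0,L}$, first put them in general position (no vertical edges; the $x$-coordinates of all vertices of all three curves distinct). The hypothesis that no $(x,y_i)\in\gamma_i$ has $y_1+y_2+y_3=0$ is an open condition — the set $\bigcup_x\bigl(Y_1(x)+Y_2(x)+Y_3(x)\bigr)$, with $Y_i(x)$ the compact fiber of $\gamma_i$ over $x$, is compact and avoids $0$ — so it survives a small perturbation while $\int_{\gamma_1+\gamma_2+\gamma_3}y\,dx$ varies continuously. For generic $x_0$ the intersection $\gamma_i\cap F_{x_0}$ is a finite transverse set; since $[\gamma_i]$ generates $H_1(\mathtt{Cyl}_L)$, its signed crossing count is $1$, the crossing orientations alternate, and the number $k_i(x_0)$ of crossings is odd. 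Listing the crossing heights in the order $\gamma_i$ meets them — inserting one virtual crossing ``at $-\infty$'' at the point where the curve winds around the cylinder, so that the orientations strictly alternate — yields a tuple $y_{i,1}(x_0),\dots,y_{i,k_i}(x_0)$ of distinct reals carrying exactly the $\pm\infty$ conventions of Conjecture \ref{adf}; this combinatorial type is constant on each of the finitely many arcs between consecutive critical $x$-values, with the heights affine there. By Fubini for the $1$-form $y\,dx$ (a coarea formula for $t\mapsto x(t)$), $\int_{\gamma_i}y\,dx=\int_0^L S_i(x_0)\,dx_0$ with $S_i(x_0):=\sum_{j=1}^{k_i(x_0)}(-1)^{j-1}y_{i,j}(x_0)$, the sign $(-1)^{j-1}$ recording the alternating orientation; $S_i$ is continuous across critical values since a merging pair of strands contributes two coalescing terms of opposite sign. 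Hence $\int_{\gamma_1+\gamma_2+\gamma_3}y\,dx=\int_0^L(S_1+S_2+S_3)(x_0)\,dx_0$.

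\textbf{The dictionary.} For the tuple attached to a generic $x_0$ I claim: (a) axiom (i) of Conjecture \ref{adf} holds if and only if each $\gamma_i$ is embedded — cutting $\mathtt{Cyl}_L$ along $F_{x_0}$ turns $\gamma_i$ into one crossing strand plus a system of caps, and the two non-crossing matchings at the two ends of the resulting strip are precisely the even-index and odd-index matchings of (i), via the crossing-arc picture of Example \ref{ncs}; (b) axiom (ii) holds if (and, for the converse, is what must be engineered to ensure) $\gamma_1,\gamma_2,\gamma_3$ do not jointly inscribe $y_1+y_2+y_3=0$ — a cap of $\gamma_i$ joining heights $y_{i,p_i},y_{i,p_i+1}$ is literally an arc in a half-plane from $(0,y_{i,p_i})$ to $(0,y_{i,p_i+1})$, and a common point $(x',z_i)$ on three such caps (necessarily with the $p_i$ of equal parity, i.e. lying on the same side of the cut) with $z_1+z_2+z_3=0$ would be three points of the $\gamma_i$ on the fiber $F_{x'}$ summing to $0$, so the topological description of non-crossing sums in Examples \ref{ncs} and \ref{diff} matches (ii) exactly with the absence of such configurations (and one checks en route that (ii) already forbids every triple sum $y_{1,j_1}+y_{2,j_2}+y_{3,j_3}$ from vanishing).

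\textbf{Closing the loop.} Granting the dictionary, Conjecture \ref{adf} forces $S_1+S_2+S_3<0$ at every generic $x_0$, hence $\int_{\gamma_1+\gamma_2+\gamma_3}y\,dx<0$, which is Conjecture \ref{sai}. Conversely, given a tuple satisfying (i) and (ii), realize it, for $L$ large, by simple closed paths consisting of $k_i$ horizontal segments over most of $\R/L\Z$ and closed up over short, mutually staggered arcs using the caps prescribed by the even/odd matchings — axiom (i) lets the caps be drawn disjointly (so each $\gamma_i$ is simple), and axiom (ii) lets them be perturbed so that no fiber meets all three curves in heights summing to $0$. Then Conjecture \ref{sai} gives $\int_{\gamma_1+\gamma_2+\gamma_3}y\,dx\ne0$; since the construction has $\int=L(S_1+S_2+S_3)+o(L)$ this yields $S_1+S_2+S_3\ne0$, and a limiting argument — translating all heights uniformly toward $-\infty$ preserves axiom (i) and, for the translate, axiom (ii), while driving $S_1+S_2+S_3$ to $-\infty$ because each $\sum_j(-1)^{j-1}=1$ — pins the sign, giving the strict inequality \eqref{yio} (the asymmetry here reflecting the fixed orientation of the homology generator encoded by the ``$-\infty$'' convention).

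\textbf{Main obstacle.} The crux is making this dictionary exact: pinning down precisely which ``meander-plus-heights'' data arise from an embedded closed curve homologous to $\mathtt{Graph}_{0,L}$, correctly handling the virtual ``$-\infty$'' crossing produced by the winding strand and matching the even/odd index parities of axioms (i)--(ii) with the two sides of the cut cylinder, and — in the realization direction — arranging the closing-up arcs of the three curves so that no spurious joint inscriptions, in particular no ``mixed'' ones involving caps or winding strands of different curves, are created. General position, the coarea formula, continuity of $S_i$, and the sign normalization are routine by comparison.
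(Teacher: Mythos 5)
Your proposal follows the same broad strategy as the paper — slice the cylinder by vertical fibers, translate the curve data into the tuple data of Conjecture \ref{adf}, and run the dictionary in both directions — but each direction has a gap that is not merely technical.

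In the direction adf $\Rightarrow$ sai, the claim that "axiom (ii) holds iff no joint inscription on the fiber" misses what the convention $y_{i,0}=y_{i,k_i+1}=-\infty$ actually encodes. The cases $p_1=p_2=p_3=0$ and $p_1=k_1,p_2=k_2,p_3=k_3$ of axiom (ii) are the endpoint inequalities $y_{1,1}+y_{2,1}+y_{3,1}<0$ and $y_{1,k_1}+y_{2,k_2}+y_{3,k_3}<0$ (these are \eqref{ineq-1}, \eqref{ineq-2} in the paper), and these do \emph{not} follow from the absence of joint inscriptions on a single fiber. The paper obtains them by first extracting from the $1$-cycle $\gamma_1\times\gamma_2\times\gamma_3\cap V$ a simple closed component $\gamma_{123}^0$ that is homologous to $\mathtt{Graph}_{0,L}^\Delta$ and avoids $\mathtt{Sums}$, then noting that $Y_1+Y_2+Y_3$ has a definite sign along $\gamma_{123}^0$ by connectedness, and reflecting so that this sign is negative. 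That homological extraction is what makes the $-\infty$ convention coherent across fibers and supplies the endpoint inequalities — without it you cannot verify the hypotheses of Conjecture \ref{adf} at $x_0$. You flag exactly this as the "main obstacle," but the concession swallows the substantive part of the argument.

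In the direction sai $\Rightarrow$ adf, the deduction "$\int = L(S_1+S_2+S_3)+o(L)$ and $\int\neq 0$ give $S_1+S_2+S_3\neq 0$" is not valid: $\int\neq 0$ for a single $L$ is consistent with $S_1+S_2+S_3=0$, since the $o(L)$ term need not vanish. The subsequent limiting step — "translating all heights toward $-\infty$ preserves axiom (i) and, for the translate, axiom (ii)" — also fails: translation does preserve axiom (i) (it involves differences $y_{i,j_1}-y_{i,j_2}$) but not axiom (ii), since shifting every $y_{i,j}$ by $-c$ shifts each triple sum by $-3c$ and changes the sign pattern. The paper avoids both problems by arguing by contradiction with a tunable degree of freedom: assuming $Q\coloneqq\sum_{i,j}(-1)^{j-1}y_{i,j}>0$ after a small perturbation, it constructs the curves with a free parameter $R$ (the area carried by the closing-up graphs $f_i$ in Definition \ref{recipe}(v), normalised by \eqref{fitt}) so that $\int_{\gamma_1+\gamma_2+\gamma_3} y\,dx = (L-4)Q - C_1 - R$; since $Q>0$, one can choose $L$ large and $R$ accordingly so that the integral vanishes, contradicting Conjecture \ref{sai}. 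Your realization has no analogue of $R$, and it is precisely this slack that lets the argument close.
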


\subsection{Forward direction}  Let us first assume Conjecture \ref{sai} and see how it implies Conjecture \ref{adf}.  Let $k_1,k_2,k_3$ and $y_{i,j}$ obey the assumptions of Conjecture \ref{adf}, but suppose for contradiction that \eqref{yio} failed.  The plan is then to use the quantities $y_{i,j}$ to build simple closed polygonal paths $\gamma_1,\gamma_2,\gamma_3 \colon \R/L\Z \to \mathtt{Cyl}_L$ for some $L>0$ to which Conjecture \ref{sai} may be applied.

By perturbing one of the $y_{i,j}$ slightly (noting that all the hypotheses on the $y_{i,j}$ are open conditions) we may assume that the quantity
\begin{equation}\label{Q-def}
 Q \coloneqq \sum_{i=1}^3 \sum_{j=1}^{k_i} (-1)^{j-1} y_{i,j} 
\end{equation}
is strictly positive.  Similarly, we may assume that the differences $|y_{i,p+1}-y_{i,p}|$ with $i=1,2,3$ and $1 \leq p < k_i$ are all distinct.

We will need a strictly monotone decreasing function $\phi \colon [0,+\infty) \to [1,2]$; the exact choice of $\phi$ is unimportant, but for concreteness one can take for instance $\phi(t) \coloneqq 1 + \frac{1}{1+t}$.

Let $L > 0$ be a sufficiently large quantity to be chosen later.  We will also need a certain large and negative quantity $-R$ (depending on $Q, L$, and the $y_{i,j}$) whose precise value will be specified later.

By applying Conjecture \ref{adf}(ii) with $p_1=p_2=p_3=0$, we see that $\{-\infty,y_{1,1}\}, \{-\infty,y_{2,1}\}, \{-\infty,y_{3,1}\}$ have non-crossing sums, which implies that
$$ y_{1,1} + y_{2,1} + y_{3,1} < 0.$$
Similarly if we apply Conjecture \ref{adf}(ii) with $p_1=k_1$, $p_2=k_2$, $p_3=k_3$, we see that
$$ y_{1,k_1} + y_{2,k_2} + y_{3,k_3} < 0.$$
As a consequence, we may find piecewise linear continuous functions $f_1,f_2,f_3 \colon [-1,1] \to \R$ such that
\begin{equation}\label{bound}
 f_i(-1) = y_{i,k_i}; \quad f_i(1) = y_{i,1} 
\end{equation}
for $i=1,2,3$, and such that
\begin{equation}\label{f123}
 f_1(t) + f_2(t) + f_3(t) < 0
\end{equation}
for all $-1 \leq t \leq 1$.  For instance, we can set $f_1,f_2,f_3$ to be the linear functions given by the boundary conditions \eqref{bound}.  But we can also subtract an arbitrary positive multiple of $1-|t|$ from any of $f_1,f_2,f_3$ and obey the above requirements.  In particular, there is a quantity $-C_0$ (independent of $L$) such that if the quantity $-R$ mentioned previously is less than or equal to $-C_0$, one can find $f_1,f_2,f_3$ solving the above conditions such that
\begin{equation}\label{fitt}
 \sum_{i=1}^3 \int_{-1}^1 f_i(t)\ dt = -R.
\end{equation}
Henceforth we fix $f_1,f_2,f_3$ with these properties.

Let $i=1,2,3$.  We construct some polygonal paths $\gamma_{i,1}, \gamma_{i,1 \to 2}, \dots, \gamma_{i,k_i-1 \to k_i}, \gamma_{i,k_i}, \gamma_{i,k_i \to 1}$ in $\mathtt{Cyl}_L$ by the following recipes:

\begin{definition}\label{recipe}\ 
\begin{itemize}
\item[(i)]  $\gamma_{i,1}$ is the rightward horizontal line segment from $(-\frac{L}{2} + 1, y_{i,1})$ to $(0, y_{i,1})$, projected to $\mathtt{Cyl}_L$.
\item[(ii)]  For any odd $1 \leq p < k_i$, $\gamma_{i,p \to p+1}$ is the piecewise linear path traversing the vertices
$$ (0, y_{i,p}), \left(\frac{L}{2} - \phi(|y_{i,p}-y_{i,p+1}|), y_{i,p}\right), \left(\frac{L}{2} - \phi(|y_{i,p}-y_{i,p+1}|), y_{i,p+1}\right), (0, y_{i,p+1})$$
in that order (that is to say, the concatenation of a rightward horizontal line segment, a vertical line segment, and a leftward horizontal line segment, if $L$ is large enough), and then projected to $\mathtt{Cyl}_L$.
\item[(iii)] For any even $1 < p < k_i$, $\gamma_{i,p \to p_1}$ is the piecewise linear path traversing the vertices
$$ (0, y_{i,p}), \left(-\frac{L}{2} + \phi(|y_{i,p}-y_{i,p+1}|), y_{i,p}\right), \left(-\frac{L}{2} + \phi(|y_{i,p}-y_{i,p+1}|), y_{i,p+1}\right), (0, y_{i,p+1})$$
in that order (that is to say, the concatenation of a leftward horizontal line segment, a vertical line segment, and a rightward horizontal line segment, if $L$ is large enough), and then projected to $\mathtt{Cyl}_L$.
\item[(iv)]  $\gamma_{i,k_i}$ is the rightward horizontal line segment from $(0, y_{i,k_i})$ to $(\frac{L}{2} - 1, y_{i,k_i})$, projected to $\mathtt{Cyl}_L$.
\item[(v)]  $\gamma_{i,k_i \to 1}$ is the graph
$$ \left\{ \left(\frac{L}{2} + t, f_i(t)\right): -1 \leq t \leq 1\right\}$$
traversed from left to right and then projected to $\mathtt{Cyl}_L$, thus it begins at $\pi_L(\frac{L}{2}-1, y_{i,k_i})$ and ends at $\pi_L(\frac{L}{2}+1, y_{i,1})$.
\end{itemize}
See Figure \ref{jam}.
\end{definition}

\begin{figure} [t]
\centering
\includegraphics[width=5in]{./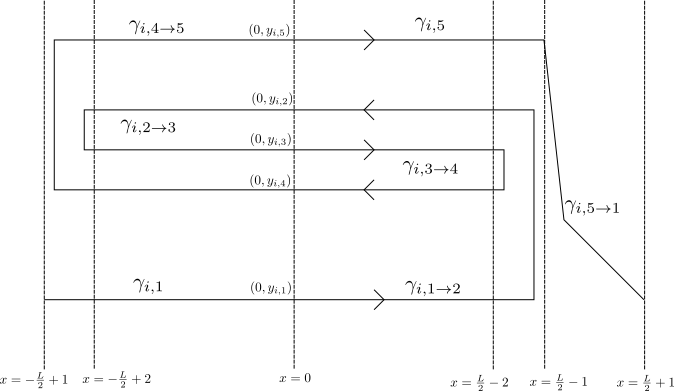}
\caption{Construction of the components of $\gamma_i$, in the case $k_i = 5$. Notice the interlacing between the $(0,y_{i,p})$ with $p$ odd and the $(0,y_{i,p})$ with $p$ even, and the alternating orientations of $\gamma_i$ at these locations.}
\label{jam}
\end{figure}

Clearly, one can concatenate the paths $\gamma_{i,1}, \gamma_{i,1 \to 2}, \dots, \gamma_{i,k_i-1 \to k_i}, \gamma_{i,k_i}, \gamma_{i,k_i \to 1}$ to form a closed polygonal path $\gamma_i$ in $\mathtt{Cyl}_L$ (which one can parameterise by $\R/L\Z$ after a suitable rescaling).  Using the convex fundamental domain $[-\frac{L}{2}+1,\frac{L}{2}+1] \times \R$ of $\mathtt{Cyl}_L$, we see that $\gamma_i$ is homotopic in this domain to the horizontal line segment from $(-\frac{L}{2}+1,y_{i,1})$ to $(\frac{L}{2}+1,y_{i,1})$, and hence $\gamma_i$ is homologous in $\mathtt{Cyl}_L$ to $\mathtt{Graph}_{0,L}$.

Using Conjecture \ref{adf}(i), we can show

\begin{lemma} Suppose $L$ is sufficiently large.  Then for any $i=1,\dots,3$, the path $\gamma_i$ is simple.
\end{lemma}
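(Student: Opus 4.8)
The plan is to dissect $\gamma_i$ into the elementary pieces of Definition~\ref{recipe} and show that, once $L$ is large, the only points where two pieces meet are the forced common endpoints of consecutive pieces. Write $\phi_p \coloneqq \phi(|y_{i,p}-y_{i,p+1}|)$; since $\phi$ is strictly monotone (hence injective) and we have arranged the differences $|y_{i,p+1}-y_{i,p}|$ to be distinct, the numbers $\phi_p$ are distinct and lie in a compact subinterval of $(1,2)$. First I would localise the pieces: for $L$ large, the \emph{right} pieces (the segment $\gamma_{i,k_i}$ and the staples $\gamma_{i,p\to p+1}$ with $p$ odd) lie in $\{0\le x\le L/2-1\}$, reaching $x=L/2-1$ only at the endpoint $(L/2-1,y_{i,k_i})$ of $\gamma_{i,k_i}$; the \emph{left} pieces (the segment $\gamma_{i,1}$ and the staples $\gamma_{i,p\to p+1}$ with $p$ even) lie in $\{-L/2+1\le x\le 0\}$, reaching $x=-L/2+1$ only at the endpoint $(-L/2+1,y_{i,1})$ of $\gamma_{i,1}$; and the wrap-around $\gamma_{i,k_i\to 1}$, being the graph of $f_i$ over $x\in[L/2-1,L/2+1]$, sits in a neighbourhood of the seam and meets the right pieces only at $(L/2-1,y_{i,k_i})$ and the left pieces only at $(-L/2+1,y_{i,1})$. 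This reduces the claim to: (a) the union $A_i$ of the right pieces is embedded (i.e.\ $\gamma_i$ is injective on the part of its domain landing in $A_i$); (b) the analogous statement for the union $B_i$ of the left pieces; (c) $A_i\cap B_i$ is exactly the finite set $\{(0,y_{i,p}):1\le p\le k_i\}$, and each such point is traversed once by $\gamma_i$, as the common endpoint of two path-consecutive pieces.

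Item (c) is pure bookkeeping: each staple $\gamma_{i,p\to p+1}$ touches the line $x=0$ precisely at $(0,y_{i,p})$ and $(0,y_{i,p+1})$, and $\gamma_{i,k_i}$ (resp.\ $\gamma_{i,1}$) touches it only at $(0,y_{i,k_i})$ (resp.\ $(0,y_{i,1})$); a short parity argument on the indices then shows that no two pieces on the same side share an $x=0$ point, that each $(0,y_{i,p})$ lies on exactly one right piece and one left piece, and that these two are adjacent in the traversal of $\gamma_i$. Since each individual piece is manifestly simple (a segment, or a staple whose two horizontal arms are at distinct heights because the $y_{i,j}$ are distinct, or the graph of a function), what remains --- and this is the heart of the matter --- is to prove that no two right pieces meet away from $x=0$, and likewise for the left pieces.

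Consider two distinct right staples $\gamma_{i,p\to p+1}$ and $\gamma_{i,q\to q+1}$ (so $p,q$ odd). Their four horizontal arms sit at the pairwise distinct heights $y_{i,p},y_{i,p+1},y_{i,q},y_{i,q+1}$, and their vertical arms at the distinct abscissae $L/2-\phi_p,L/2-\phi_q$, so a meeting can only be a horizontal arm of one crossing the vertical arm of the other. Using that $\phi$ is strictly \emph{decreasing}, the staple whose height-pair has the larger span reaches further to the right, and from this one checks that the two staples meet if and only if the pairs $\{y_{i,p},y_{i,p+1}\}$ and $\{y_{i,q},y_{i,q+1}\}$ interleave on the line (exactly one of $y_{i,q},y_{i,q+1}$ strictly between $y_{i,p}$ and $y_{i,p+1}$). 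By Example~\ref{ncs} this interleaving is precisely the failure of $\{y_{i,p},y_{i,p+1}\}$ and $\{-y_{i,q},-y_{i,q+1}\}$ to have non-crossing sums, which is exactly what Conjecture~\ref{adf}(i) forbids for indices of the same parity. The same computation, now with $\gamma_{i,k_i}$ in place of one of the staples, shows that $\gamma_{i,k_i}$ crosses $\gamma_{i,p\to p+1}$ iff $y_{i,k_i}$ lies strictly between $y_{i,p}$ and $y_{i,p+1}$; invoking the convention $y_{i,k_i+1}=-\infty$ together with the negation-invariance of non-crossing sums (Remark~\ref{inv}), this too is forbidden by Conjecture~\ref{adf}(i) applied with $q=k_i$. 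Reflecting the whole picture across $x=0$ gives the left-side analogue, with $\gamma_{i,1}$ playing the role governed by the virtual pair $\{y_{i,0},y_{i,1}\}=\{-\infty,y_{i,1}\}$, i.e.\ Conjecture~\ref{adf}(i) with $p=0$. Assembling (a), (b), (c) then shows that $\gamma_i$ is simple.

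I expect the crossing-detection step of the last paragraph to be the main obstacle: one must determine exactly when two of these staple-shaped arcs intersect and confirm that it is governed by the interleaving (equivalently, the non-crossing-sums) condition, and here the strict monotonicity of $\phi$ is essential --- without it, a height-pair nested inside another could nonetheless poke out past its arms and produce a spurious crossing. The coarse localisation of the pieces for large $L$ and the index bookkeeping at $x=0$ are routine by comparison. The degenerate case $k_i=1$, in which $\gamma_i$ is a round circle modified near the seam to follow the graph of $f_i$, is immediate and may be disposed of separately.
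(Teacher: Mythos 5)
Your proof is correct and follows essentially the same route as the paper's: localise the pieces (wrap-around near the seam, odd-index staples and $\gamma_{i,k_i}$ on the right of $x=0$, even-index staples and $\gamma_{i,1}$ on the left), dispose of the bookkeeping at $x=0$, and then rule out crossings between two same-side pieces by combining the strict monotonicity of $\phi$ with Conjecture~\ref{adf}(i) via Example~\ref{ncs}. The only cosmetic difference is that you package the key step as an ``iff interleave'' characterisation of crossings, whereas the paper argues the contrapositive directly --- the non-crossing hypothesis forces the two height-intervals to be nested or disjoint, and then checks that neither case produces a meeting --- but these are the same argument.
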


\begin{proof}  Each of the components $\gamma_{i,1}, \gamma_{i,1 \to 2}, \dots, \gamma_{i,k_i-1 \to k_i}, \gamma_{i,k_i}, \gamma_{i,k_i \to 1}$ of $\gamma_i$ are separately simple, and the endpoints are all distinct except for the endpoints of adjacent paths, so it suffices to show that no two of these components meet in the interior.

The interior of the path $\gamma_{i,k_i \to 1}$ lies in the strip $(\frac{L}{2}-1,\frac{L}{2}+1) \times \R$ (viewed as a subset of $\mathtt{Cyl}_L$), while the interior of the other paths lie in either $(-\frac{L}{2}+1, 0) \times \R$ or $(0, \frac{L}{2}-1) \times \R$, again only touching the boundary at endpoints.  So it suffices to show that there are no crossings in $(-\frac{L}{2}+1, 0) \times \R$ or $(0, \frac{L}{2}-1) \times \R$.

We just verify the claim for $(-\frac{L}{2}+1, 0) \times \R$, as the case of $(0, \frac{L}{2}-1) \times \R$ is completely analogous.  The only path components here are $\gamma_{i,1}$ and $\gamma_{i,p \to p+1}$ for $1 < p < k_i$ even.  To check that $\gamma_{i,1}$ and $\gamma_{i,p \to p+1}$ do not cross, it suffices from Definitions \ref{recipe}(i), (iii) to show that $y_{i,1}$ does not lie between $y_{i,p}$ and $y_{i,p+1}$.  But from Conjecture \ref{adf}(i) we see that $\{-\infty, y_{i,1}\}$ and $\{-y_{i,p}, -y_{i,p+1}\}$ have non-crossing sums, which gives the claim by Example \ref{ncs}.

Now we need to check that $\gamma_{i,p \to p+1}$ and $\gamma_{i, q \to q+1}$ do not cross when $1 < p, q < k_1$ are even and distinct.  By Conjecture \ref{adf}(i), the pairs $\{ y_{i,p}, y_{i,p+1} \}$ and $\{ -y_{i,q}, -y_{i,q+1} \}$ have non-crossing sums; thus the interval spanned by $\{ y_{i,q}, y_{i,q+1} \}$ either is disjoint from, contains, or is contained in the interval spanned by $\{ y_{i,p}, y_{i,p+1}\}$. In the former case, the paths $\gamma_{i,p \to p+1}$ and $\gamma_{i, q \to q+1}$ are clearly disjoint because from Definition \ref{recipe}(ii), the $y$ coordinate of any point on the first path lies in the interval spanned by $\{ y_{i,p}, y_{i,p+1}\}$, and the $y$ coordinate on any point on the second path lies in the interval spanned by 
$\{ y_{i,q}, y_{i,q+1}\}$.  By symmetry, the only remaining case to check is when the interval spanned by $\{ y_{i,p}, y_{i,p+1}\}$ is contained in the interval spanned by $\{ y_{i,q}, y_{i,q+1}\}$.  But in this case, we have $|y_{i,p}-y_{i,p+1}| < |y_{i,q} -y_{i,q+1}|$, so by Definition \ref{recipe}(iii) and the monotone decreasing nature of $\phi$, the vertical segment of the curve $\gamma_{i,p \to p+1}$ lies to the right of that of $\gamma_{i,q \to q+1}$.  From this we see that the two curves are disjoint.  This concludes the demonstration of simplicity in $(-\frac{L}{2}+1, 0) \times \R$; the case of $(0,\frac{L}{2}-1) \times \R$ is similar.
\end{proof}

In a similar fashion, we can use Conjecture \ref{adf}(ii) to show

\begin{lemma}  There does not exist $x \in \R/L\Z$ and $y_1,y_2,y_3 \in \R$ with $y_1+y_2+y_3 = 0$ such that $(x,y_i) \in \gamma_i(\R)$ for all $i=1,2,3$.
\end{lemma}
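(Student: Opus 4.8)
The plan is to suppose for contradiction that $x_*\in\R/L\Z$ and $y_1,y_2,y_3\in\R$ satisfy $y_1+y_2+y_3=0$ and $(x_*,y_i)\in\gamma_i(\R/L\Z)$ for $i=1,2,3$, and to reach a contradiction from one of the hypotheses of Conjecture~\ref{adf} according to where $x_*$ sits. For $L$ large, every $x_*$ lies in (at least) one of four regions: the \emph{graph arc} $[\frac L2-1,\frac L2+1]$, the slice $x_*=0$, the \emph{right strip} $(0,\frac L2-1)$, and the \emph{left strip} $(-\frac L2+1,0)$. The first step is to read off, directly from Definition~\ref{recipe}, a description of the fibre $Y_i(x_*):=\{y\in\R:(x_*,y)\in\gamma_i(\R/L\Z)\}$ in each region: on the graph arc it is the single point $\{f_i(x_*-\frac L2)\}$, since the only component reaching there is $\gamma_{i,k_i \to 1}$ (every vertical segment of a staple $\gamma_{i,p \to p+1}$ sits at abscissa $\frac L2-\phi(|y_{i,p}-y_{i,p+1}|)$ resp.\ $-\frac L2+\phi(|y_{i,p}-y_{i,p+1}|)$, strictly inside one of the two strips once $L>4$, hence off the graph arc and off $x_*=0$); at $x_*=0$ the fibre is the whole set $\{y_{i,1},\dots,y_{i,k_i}\}$; and in a strip it is a union of horizontal pieces — whose heights all lie in $\{y_{i,1},\dots,y_{i,k_i}\}$ — together with at most one vertical segment, the latter present exactly when $x_*$ is one of the finitely many \emph{turning abscissae} $\frac L2-\phi(|y_{i,p}-y_{i,p+1}|)$ (right strip, $p$ odd) or $-\frac L2+\phi(|y_{i,p}-y_{i,p+1}|)$ (left strip, $p$ even). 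By the assumed distinctness of the differences $|y_{i,p}-y_{i,p+1}|$, each turning abscissa belongs to exactly one $\gamma_i$ and one pair.

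The graph arc is dispatched at once: there $y_i=f_i(x_*-\frac L2)$, so $y_1+y_2+y_3=\sum_i f_i(x_*-\frac L2)<0$ by \eqref{f123}. The slice $x_*=0$ is the combinatorial core. Here $y_i=y_{i,q_i}$ for some $1\le q_i\le k_i$; since $k_i$ is odd and $q_i\ge1$ one may choose $p_i\in\{q_i-1,q_i\}$ with all three $p_i$ even and $0\le p_i\le k_i$ (take $p_i=q_i$ if $q_i$ is even, $p_i=q_i-1$ if $q_i$ is odd). Hypothesis (ii) of Conjecture~\ref{adf} then says $\{y_{1,p_1},y_{1,p_1+1}\}$, $\{y_{2,p_2},y_{2,p_2+1}\}$, $\{y_{3,p_3},y_{3,p_3+1}\}$ have non-crossing sums, so axiom (ii) of Definition~\ref{ncs-def} forces every one of the eight sums $y_{1,j_1}+y_{2,j_2}+y_{3,j_3}$ with $j_i\in\{p_i,p_i+1\}$ — in particular $y_{1,q_1}+y_{2,q_2}+y_{3,q_3}$ — to be nonzero, contradicting $y_1+y_2+y_3=0$. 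At a \emph{generic} point of a strip (not a turning abscissa) every $Y_i(x_*)$ is contained in $\{y_{i,1},\dots,y_{i,k_i}\}$, so that case reduces verbatim to $x_*=0$.

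The remaining case — $x_*$ a turning abscissa — is the one I expect to be the main obstacle, since there the fibre of one curve contains a genuine interval and the finite statement above no longer applies directly. Say $x_*=\frac L2-\phi(|y_{i_0,P_0}-y_{i_0,P_0+1}|)$ lies in the right strip, so $P_0$ is odd; the left strip is identical with ``even'' for ``odd'' and the end-pair $\{-\infty,y_{i,1}\}$ (index $0$) playing the role of $\{y_{i,k_i},-\infty\}$ (index $k_i$). For $i\ne i_0$ we still have $Y_i(x_*)\subseteq\{y_{i,1},\dots,y_{i,k_i}\}$, while $Y_{i_0}(x_*)$ is the union of a subset of $\{y_{i_0,1},\dots,y_{i_0,k_{i_0}}\}$ with the closed interval $J$ spanned by $y_{i_0,P_0}$ and $y_{i_0,P_0+1}$. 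If $y_{i_0}$ lies in that subset we are back in the previous case; so assume $y_{i_0}\in J$, write $\{i',i''\}=\{1,2,3\}\setminus\{i_0\}$, $y_{i'}=y_{i',q'}$, $y_{i''}=y_{i'',q''}$. The geometric point is that, since the component of $\gamma_{i'}$ through $(x_*,y_{i'})$ is still ``open'' at this abscissa, $q'$ is an endpoint of an odd-indexed pair $\{y_{i',p'},y_{i',p'+1}\}$ (with $p'=k_{i'}$, i.e.\ the pair $\{y_{i',k_{i'}},-\infty\}$, when $y_{i',q'}=y_{i',k_{i'}}$) whose width strictly exceeds $|y_{i_0,P_0}-y_{i_0,P_0+1}|$; likewise for $q''$, and since $k_i$ is odd all of $P_0,p',p''$ may be taken odd. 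Hypothesis (ii) then makes $\{y_{i_0,P_0},y_{i_0,P_0+1}\}$, $\{y_{i',p'},y_{i',p'+1}\}$, $\{y_{i'',p''},y_{i'',p''+1}\}$ have non-crossing sums with $\{y_{i_0,P_0},y_{i_0,P_0+1}\}$ the unique minimal-width pair, and the reformulation in Example~\ref{diff} — that the minimal-width pair has no influence on the sign of the triple sums, extended to a possible $-\infty$ entry by replacing it with a sufficiently large negative constant, which alters none of the relevant signs — yields $\operatorname{sgn}(y_{i_0,P_0}+y_{i',q'}+y_{i'',q''})=\operatorname{sgn}(y_{i_0,P_0+1}+y_{i',q'}+y_{i'',q''})\ne0$. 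Equivalently $-(y_{i',q'}+y_{i'',q''})\notin J$, so $y_{i_0}+y_{i',q'}+y_{i'',q''}\ne0$ for every $y_{i_0}\in J$, the final contradiction. Beyond this, the only remaining work is the routine verification that the turning abscissae really are the only non-generic slices, which needs nothing more than taking $L$ large.
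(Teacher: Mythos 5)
Your proof is correct, and it follows essentially the same route as the paper: dispatch the graph arc via \eqref{f123}, then reduce the strip cases to Conjecture~\ref{adf}(ii) via the minimal-width pair observation of Example~\ref{diff} together with the observation that the open staple components force $y_{i'}, y_{i''}$ to be endpoints of strictly wider same-parity pairs. The only organizational difference is that the paper treats all $x \in [0,\frac{L}{2}-1]$ uniformly with the minimal-width pair argument (identifying for each $i$ the open component through $(x,y_i)$ and its associated pair, and noting the minimal pair's $y$-coordinate lies in the spanned interval), whereas you split off the turning abscissae as a special case and handle the generic slices by a direct application of Definition~\ref{ncs-def}(ii) to a same-parity choice of $p_i$'s, which is a small streamlining of the generic case but not a different mechanism.
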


\begin{proof} Suppose for contradiction that $x,y_1,y_2,y_3$ exist with the stated properties. 

First suppose that $x$ lies in $[\frac{L}{2}-1, \frac{L}{2}+1]$ (projected to $\R/L\Z$).  Then from Definition \ref{recipe}(v) we have $y_i = f_i(x - \frac{L}{2})$ for $i=1,2,3$, but then from \eqref{f123} we cannot have $y_1+y_2+y_3=0$, a contradiction.

We now treat the case when $x$ lies in $[0, \frac{L}{2}-1]$ (projected to $\R/L\Z$); the remaining case when $x$ lies in $[-\frac{L}{2}+1,0]$ is similar and will be omitted.  By Definition \ref{recipe}, we see that each of the $(x,y_i)$ lies either on $\gamma_{i,k_i}$ or on $\gamma_{i, p_i \to p_i+1}$ for some odd $1 \leq p_i < k_i$. 

Suppose first that each of the $(x,y_i)$ lie on $\gamma_{i,p_i \to p_i+1}$.  By hypothesis, the quantities $\frac{L}{2} - \phi(|y_{i,p_i} - y_{i,p_i+1}|)$ for $i=1,2,3$ are distinct; by cyclic permutation we may assume that $\frac{L}{2} - \phi(|y_{1,p_1} - y_{1,p_1+1}|)$ is the smallest of these quantities, or equivalently that $|y_{i,p_i}-y_{i,p_i+1}|$ is minimised at $i=1$.  By Definition \ref{recipe}(ii), the $x$ coordinate of $\gamma_{1,p_1 \to p_1+1}$ does not exceed $\frac{L}{2} - \phi(|y_{1,p_1} - y_{1,p_1+1}|)$, which implies that
$$ x \leq \frac{L}{2} - \phi(|y_{1,p_1} - y_{1,p_1+1}|),$$
and hence by further application of Definition \ref{recipe}(ii) we have
$y_2 = y_{2,q_2}$ and $y_3 = y_{3,q_3}$ for some $q_2 \in \{p_2,p_2+1\}$ and $q_2 \in \{p_3,p_3+1\}$; furthermore $y_1$ lies between $y_{1,p_1}$ and $y_{1,p_1+1}$ inclusive.  Since $y_1+y_2+y_3=0$, this implies that the sums $y_{1,p_1} + y_{2,q_2} + y_{3,q_3}$ and $y_{1,p_1+1} + y_{2,q_2} + y_{3,q_3}$ do not have the same sign.  Because $i=1$ minimises $|y_{i,p_i}-y_{i,p_i+1}|$, we conclude (from Example \ref{diff}) that the pairs $\{ y_{1,p_1}, y_{1,p_1+1}\}$, $\{ y_{2,p_2}, y_{2,p_2+1}\}$, $\{ y_{3,p_3}, y_{3,p_3+1}\}$ do not have non-crossing sums, contradicting Conjecture \ref{adf}(ii).

The case when one or more of the $(x,y_i)$ lies on $\gamma_{i,k_i}$ is treated similarly, with $k_i$ now playing the role of $p_i$ (and recalling the convention $y_{i,k_i+1} = -\infty$.  This concludes the treatment of the case $x \in [0, \frac{L}{2}-1]$, and the case $x \in [-\frac{L}{2}+1,0]$ is similar.
\end{proof}

From the previous two lemmas and Conjecture \ref{sai}, we conclude that
\begin{equation}\label{doop}
 \int_{\gamma_1+\gamma_2+\gamma_3} y\ dx \neq 0
\end{equation}
We work in the fundamental domain $[-\frac{L}{2}+1, \frac{L}{2}+1] \times \R$ of $\mathtt{Cyl}_L$.  On the strip $[\frac{L}{2}-1, \frac{L}{2}+1] \times \R$, the contribution to $\int_{\gamma_1+\gamma_2+\gamma_3} y\ dx$ is $\sum_{i=1}^3 \int_{-1}^1 f_i(t)\ dt = -R$ thanks to Definition \ref{recipe}(v) and \eqref{fitt}.  On the strip $[-\frac{L}{2}+2, \frac{L}{2}-2] \times \R$, the curve $\gamma_i$ for $i=1,2,3$ is simply the union of the line segments $[-\frac{L}{2}+2, \frac{L}{2}-2] \times \{y_{i,p}\}$ for $p=1,\dots,k_i$ (traversed from left to right for odd $p$, and right to left for even $p$), so the contribution to $\int_{\gamma_1+\gamma_2+\gamma_3} y\ dx$ here is
$$ \sum_{i=1}^3 (L-4) \sum_{j=1}^{k_i} (-1)^{j-1} y_{i,j} = (L-4) Q$$
thanks to \eqref{Q-def}.  Finally, the contribution of the remaining strips $[-\frac{L}{2}+1, -\frac{L}{2}+2] \times \R \cup [\frac{L}{2}-2, -\frac{L}{2}-1] \times \R$ is some quantity $-C_1$ independent of $L$ and $R$, as can be seen by translating these strips by $\pm \frac{L}{2}$.  The inequality \eqref{doop} thus becomes
$$ (L-4) Q - C_1 - R \neq 0.$$
But as $Q$ is positive, we can make this quantity vanish by choosing $L$ large enough and then setting $-R \coloneqq C_1 - (L-4)Q$; note for $L$ large enough that this value of $-R$ will be less than the threshold $-C_0$ needed so that one can arrange the function $f_1,f_2,f_3$ to obey \eqref{fitt}.  This yields the desired contradiction.

\subsection{Backward direction}  Now we assume Conjecture \ref{adf} and see how it implies Conjecture \ref{sai}.

By applying (a slight variant of) Proposition \ref{lim2}, we see that to prove Conjecture \ref{sai}, it suffices to do so under the additional nondegeneracy hypothesis that all the vertices of $\gamma_1, \gamma_2, \gamma_3$ have distinct $x$-coordinates (in particular, these curves do not contain any vertical edges).  Write $\gamma_i(t) = (x_i(t),y_i(t))$ for some piecewise linear $x_i \colon \R/L\Z \to \R/L\Z$ and $y_i \colon \R/L\Z \to \R$.  As $\gamma_i$ is homologous to $\mathtt{Graph}_{0,L}$, we can find a continuous lift $\tilde x_i \colon \R \to \R$ of $x_i$ with $\tilde x_i(t+L) = \tilde x_i(t)+L$ for all $t \in \R$; we also let $\tilde y_i \colon \R \to \R$ be the periodic lift of $y_i$.  As $\tilde x_i(t) - t$ is periodic and continuous, it is bounded; by multiplying the period $L$ by a large integer if necessary, we may assume that
\begin{equation}\label{x-stable}
|\tilde x_i(t)-t| \leq \frac{L}{10}
\end{equation}
for all $t \in \R$ and $i=1,2,3$.

Using the nondegeneracy hypothesis, we see that for any $i=1,2,3$ and any $x \in \R$, the fibre $\{ y \in \R: (x,y) \in \gamma_i(\R) \}$ consists of a finite number $k_i(x)$ of real numbers, where the function $k_i$ takes values in the odd natural numbers, is periodic in $L$, and is locally constant for all $x$ outside of finitely many residue classes mod $L$.  We can enumerate these real numbers as
$$ \tilde y_i( t_{i,1}(x) ), \dots, \tilde y_i( t_{i,k_i(x)}(x) )$$
where $t_{i,1}(x) < \dots < t_{i,k_i(x)}(x)$ are those real numbers $t$ with $\tilde x_i(t) = x$, arranged in increasing order, thus
\begin{equation}\label{yan}
\tilde x_i(t_{i,j}(x)) = x
\end{equation}
for all $x \in \R$, $i=1,2,3$, and $1 \leq j \leq k_i(x)$.  For $x$ outside of finitely many residue classes mod $L$, the functions $t_{i,j}$ are locally linear, and have the $L\Z$-equivariance property
$$ t_{i,j}(x+L) = t_{i,j}(x) + L$$
for all $x \in \R$, $i=1,2,3$, and $1 \leq j \leq k_i(x)$.  Also, from the nondegeneracy hypothesis we see from the intermediate value theorem that outside of finitely many residue classes mod $L$, $t_{i,j}$ is increasing for odd $j$ and decreasing for even $j$.

The analogue of the set $\overline{\mathtt{Squares}_L} \subset \mathtt{Cyl}_L^4$ in this context is the oriented $3$-dimensional submanifold $\mathtt{Sums}$ of $\mathtt{Cyl}_L^3$ defined by
$$ \mathtt{Sums} \coloneqq \{ ((x_1,y_1),(x_2,y_2),(x_3,y_3)) \in \mathtt{Cyl}_L^3: x_1=x_2=x_3; y_1+y_2+y_3 = 0\}.$$
The hypotheses of Conjecture \ref{sai} then assert that the $3$-cycle $\gamma_1 \times \gamma_2 \times \gamma_3$ in $\mathtt{Cyl}_L^3$ does not intersect $\mathtt{Sums}$.  We view $\mathtt{Sums}$ as an oriented submanifold of the $4$-manifold $V \subset \mathtt{Cyl}_L^3$ defined by
$$ V \coloneqq \{ ((x_1,y_1),(x_2,y_2),(x_3,y_3)) \in \mathtt{Cyl}_L^3: x_1=x_2=x_3\}.$$
As we assumed the vertices of $\gamma_1,\gamma_2,\gamma_3$ to have distinct $x$ coordinates, the $3$-cycle $\gamma_1 \times \gamma_2 \times \gamma_3$ intersects $V$ transversely in some $1$-cycle $\gamma_{123}$.  As $\gamma_1,\gamma_2,\gamma_3$ are homologous to $\mathtt{Graph}_{0,L}$, $\gamma_{123}$ is homologous to the $1$-cycle
$$ \mathtt{Graph}_{0,L}^\Delta \coloneqq \{ (p,p,p): p \in \mathtt{Graph}_{0,L} \}$$
with the standard orientation.  

Now we argue as in the previous section.  The $4$-manifold $V$ is homeomorphic to $\R/L\Z \times \R^3$ and thus has first homology generated by $\mathtt{Graph}_{0,L}^\Delta$.  By the greedy algorithm, one can express the $1$-cycle $\gamma_{123}$ as a finite integer linear combination of closed paths contained (as a set) in $\gamma_{123}$, each of which is either simple or a $1$-boundary; one of these, say $\gamma_{123}^0 \colon \R/L\Z \to V$, is homologous to $m \mathtt{Graph}_{0,L}^\Delta$ for some non-zero integer $m$, and is thus simple.  From the Jordan curve theorem, $m$ must be $+1$ or $-1$; by reversing the orientation of $\gamma_{123}^0$ we can then assume that $m=1$, thus $\gamma_{123}^0$ is homologous to $\mathtt{Graph}_{0,L}^\Delta$ and is contained (as a set) in $\gamma_{123}$.  In particular, it avoids $\mathtt{Sums}$.  If we write
$$ \gamma_{123}^0(t) = ((X(t),Y_1(t)), (X(t),Y_2(t)), (X(t),Y_3(t)))$$
then $X \colon \R/L\Z \to \R/L\Z$, $Y_1,Y_2,Y_3 \colon \R/L\Z \to \R$ are piecewise linear continuous functions with $X$ homologous to the identity function (in the sense that $X$ lifts to a function $\tilde X \colon \R \to \R$ with $\tilde X(t+L) = \tilde X(t)+L$ for all $t \in \R$), with the properties that
\begin{equation}\label{dap}
(X(t),Y_i(t)) \in \gamma_i(\R/L\Z)
\end{equation}
and
\begin{equation}\label{ysum}
 Y_1(t)+Y_2(t)+Y_3(t) \neq 0
\end{equation}
for all $t \in \R/L\Z$ and $i=1,2,3$.  

\begin{remark}\label{dyn}
One can view the functions $X(t), Y_1(t), Y_2(t), Y_3(t)$ from a dynamical perspective by thinking of $(X(t),Y_i(t))$ as the trajectory of a particle $P_i$ that is constrained to lie in $\gamma_i(\R/L\Z)$, and with all three particles $P_1,P_2,P_3$ constrained to lie on a vertical line.  We can also constrain the particles $P_1,P_2,P_3$ to have a constant horizontal speed; the particles move in one horizontal direction until one of the particles $P_i$ cannot move any further due to it hitting a vertex $v$ of $\gamma_i$ with both edges adjacent to $v$ lying on the same side of the vertical line containing $v$.  Whenever such a collision occurs, the horizontal velocity reverses sign, $P_i$ moves from one edge of $\gamma_i$ to the next, and the other two particles reverse themselves and retrace their steps; see Figure \ref{collide}.  Note from our hypotheses that only one collision occurs at a time.  Because the paths $\gamma_1,\gamma_2,\gamma_3$ have only finitely many edges, these trajectories must be periodic; the above homological considerations ensure that at least one of these trajectories is homologous to $\mathtt{Graph}_{0,L}^\Delta$ (possibly after enlarging the period $L$).
\end{remark}

\begin{figure} [t]
\centering
\includegraphics[width=3.5in]{./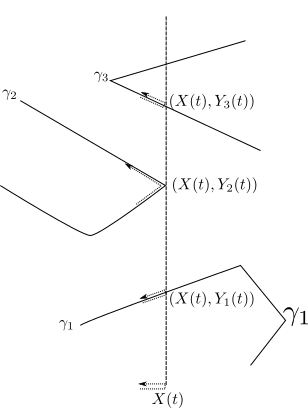}
\caption{The dynamics of $X(t), Y_1(t), Y_2(t), Y_3(t)$.}
\label{collide}
\end{figure}

Recall that $X \colon \R/L\Z \to \R/L\Z$ lifts to a function $\tilde X \colon \R \to \R$ such that $\tilde X(t+L) = \tilde X(t)+L$ for all $t \in \R$, thus $\tilde X(t) - t$ is periodic and therefore bounded.  We also lift $Y_i: \R/L\Z \to \R$ periodically to $\tilde Y_i: \R \to \R$ for $i=1,2,3$.  By lifting and \eqref{dap}, we can then find unique continuous functions $T_i \colon \R \to \R$ with $T_i(t+L) = T_i(t)+L$ for all $t \in \R$, such that
\begin{equation}\label{xan}
 (\tilde X(t), \tilde Y_i(t)) = (\tilde x_i(T_i(t)), \tilde y_i(T_i(t)))
\end{equation}
for all $t \in \R$.  By replacing $L$ with a large multiple if necessary, we may assume that
\begin{equation}\label{xat}
 |\tilde X(t) - t|, |T_i(t)-t| \leq \frac{L}{10}
\end{equation}
for all $t \in \R$ and $i=1,2,3$.  

By continuity and \eqref{ysum}, we see that the expression $\tilde Y_1(t) + \tilde Y_2(t) + \tilde Y_3(t)$ is either positive for all $t$, or negative for all $t$.  By applying the reflection $(x,y) \mapsto (x,-y)$ on $\mathtt{Cyl}_L$ we may assume the latter case occurs, thus
\begin{equation}\label{yot}
 \tilde Y_1(t)+\tilde Y_2(t)+\tilde Y_3(t) < 0
\end{equation}
for all $t \in \R/L\Z$.  To establish Conjecture \ref{sai}, it suffices to show that
$$ \int_{\gamma_1+\gamma_2+\gamma_3} y\ dx < 0;$$
integrating fibre by fibre, it will suffice to show that
\begin{equation}\label{flip}
 \sum_{i=1}^3 \sum_{j=1}^{k_i(x)} (-1)^{j-1} \tilde y_i(t_{i,j}(x)) < 0 
\end{equation}
for almost every $x \in \R$.

Fix $x \in \R$; we may assume that $x$ avoids all the $x$ coordinates of vertices of $\gamma_1,\gamma_2,\gamma_3$.  We abbreviate $k_i = k_i(x)$ and
$$ y_{i,j} \coloneqq \tilde y_i(t_{i,j}) $$
for $i=1,2,3$ and $1 \leq j \leq k_i$, adopting the conventions $y_{i,0} = y_{i,k_i+1} = -\infty$.
Applying Conjecture \ref{adf}, it will suffice to verify the hypotheses (i), (ii) of that conjecture.

Let $t_+ = t_+(x)$ denote the largest time $t_+ \in \R$ for which $\tilde X(t_+) = x$ (such a time exists thanks to \eqref{xat} and continuity).  We claim that $T_i(t_+) = t_{i,k_i}(x)$ for all $i=1,2,3$.  Suppose for contradiction that this failed for some $i=1,2,3$, then from \eqref{dap} one has $T_i(t_+) = t_{i,p}(x)$ for some $1 \leq p < k_i$.  From \eqref{xat} and the intermediate value theorem we must then have $T_i(t) = t_{i,k_i(x)}(x)$ for some $t>t_+$, which by \eqref{xan}, \eqref{yan} gives $\tilde X(t) = x$, contradicting the maximality of $t_+$.  Similarly, if $t_- = t_-(x)$ is the smallest time $t_- \in \R$ for which $\tilde X(t_-) = x$, then $T_i(t_-) = t_{i,1}(x)$ for $i=1,2,3$.  
From \eqref{yot} applied at the times $t_+, t_-$ we have the inequalities
\begin{equation}\label{ineq-1}
y_{1,1} + y_{2,1} + y_{3,1} < 0
\end{equation}
and
\begin{equation}\label{ineq-2}
y_{1,k_1} + y_{2,k_2} + y_{3,k_3} < 0.
\end{equation}
Having obtained these inequalities, we will have no further need of the functions $X, \tilde X, Y_i, \tilde Y_i, T_i$ or the curves $\gamma_{123}^0$, although we will introduce a variant of these functions shortly.

We now verify the non-crossing property (i) for any given $i=1,2,3$.  We just verify the claim when $p,q$ are odd, as the claim when $p,q$ are even is completely analogous.  First suppose that $q < k_i$.  Let $\gamma_{x,i,p}$ denote the restriction of path $t \mapsto (\tilde x_i( t ), \tilde y_i( t ))$ to the interval $t \in [t_{i,p}(x), t_{i,p+1}(x)]$; define $\gamma_{x,i,q}$ similarly.  The path $\gamma_{x,i,p}$ traces out a piecewise linear curve in $\R^2$ that starts at $(x, y_{i,p})$, ends at $(x, y_{i,p+1})$, and does not encounter the vertical line $x \times \R$ at any point in between; also, it moves to the right for $t$ near $t_{i,p}(x)$ (and to the left for $t$ near $t_{i,p+1}(x)$).  Thus, this curve $\gamma_{x,i,p}$ must stay in the right half-plane $[x,+\infty) \times \R$; actually, by \eqref{x-stable} we see that it stays in the strip $[x, x+\frac{L}{2}] \times \R$ (say).  Similarly for $\gamma_{x,i,q}$.  On the other hand, as $\gamma_i$ is simple and $p<q$, the two paths $\gamma_{x,i,p}$ and $\gamma_{x,i,q}$ cannot meet.  From the Jordan curve theorem and Example \ref{ncs}, this forces the endpoints $\{ y_{i,p}, y_{i,p+1} \}$ and $\{ y_{i,q}, y_{i,q+1} \}$ of these paths to be non-crossing, giving (i) in this case.

Now suppose that $q = k_i$.  In this case we define $\gamma_{x,i,q}$ to be the restriction of $t \mapsto (\tilde x_i( t ), \tilde y_i( t ))$ to the interval $[t_{i,k_i(x)}(x), t_{i,1}(x) + L]$ (this interval is well-defined by \eqref{x-stable}).  This is a path from $(x, y_{i,k_i})$ to $(x+L, y_{i,1})$ that does not cross $\{x,x+L\} \times \R$ except at endpoints, and hence lies in the strip $[x,x+L] \times \R$.  It cannot cross $\gamma_{x,i,p}$, which also lies in this strip, avoids the right edge, and starts and ends at the points $(x,y_{i,p}), (x,y_{i,p+1})$ on the left edge.  By the Jordan curve theorem, this implies that $y_{i,k_i}$ cannot lie between $y_{i,p}$ and $y_{i,p+1}$, which by Example \ref{ncs} implies (since $y_{i,k_i+1}=-\infty$) that $\{ y_{i,p}, y_{i,p+1}\}$ and $\{y_{i,k_i}, y_{i,k_i+1}\}$ are non-crossing.  This concludes the establishment of (i) when $p,q$ are odd; the case when $p,q$ are even is analogous (working to the left of $\{x\} \times \R$ rather than to the right, and using the convention $y_{i,0} = -\infty$ rather than $y_{i,k_i+1} = -\infty$) and is omitted.

Now we verify (ii) for $0 \leq p_1 \leq k_1$, $0 \leq p_2 \leq k_2$, $0 \leq p_3 \leq k_3$ with $p_1,p_2,p_3$ the same parity.  We just establish the claim when $p_1,p_2,p_3$ are all odd, as the case when $p_1,p_2,p_3$ are all even is completely analogous.

In the case $p_1 = k_1, p_2 = k_2, p_3 = k_3$, we see from \eqref{ineq-2} and the conventions $y_{1,k_1+1} = y_{2,k_2+2} = y_{3,k_3+3}=-\infty$ that the pairs $\{ y_{1,k_1}, y_{1,k_1+1}\}$, $\{ y_{2,k_2}, y_{2,k_2+1}\}$, $\{ y_{3,k_3}, y_{3,k_3+1}\}$.  Thus we may assume that $p_i < k_i$ for at least one $i$; say $p_1 < k_1$.

As in the proof of (i), we can form the curves $\gamma_{x,i,p_i}$ for $i=1,2,3$, which lie in the strip $[x, x+L] \times \R$, with initial point $(x, y_{i,p_i})$ and final point $(x, y_{i,p_{i+1}})$.  For $i=1,2,3$, let $x_i$ denote the maximum $x$ coordinate attained by $\gamma_{x,i,p_i}$, thus $x < x_i \leq x+L$; furthermore $x_i = x+L$ if $p_i = k_i$ and $x_i \leq x + \frac{L}{2}$ otherwise, in particular $x_1 \leq x + \frac{L}{2}$.  As the vertices of $\gamma_i$ have distinct $x$ coordinates, the $x_i$ are distinct in the interval $[x,x+\frac{L}{2}]$; without loss of generality we may then take $x_1 < x_2,x_3$.  For $i=1,2,3$, define $\gamma'_{x,i,p_i}$ to be the connected component of $\gamma_{x,i,p_i} \cap [x, x_1] \times \R$ that contains $(x,y_{i,p_i})$; thus $\gamma'_{x,1,p_1} = \gamma_{x,1,p_1}$, and for $i=2,3$, $\gamma'_{x,i,p_i}$ is a piecewise path connecting $(x,y_{i,p_i})$ to some point on the vertical line $\{x_1\} \times \R$.

Consider the set
$$ S \coloneqq \{ (x',y'_1,y'_2,y'_3) \in \R^4: (x',y'_i) \hbox{ lies in } \gamma'_{x,i,p_i} \hbox{ for } i=1,2,3 \}.$$
The set $S$ is a union of line segments in $\R^4$.  It contains the point $(x, y_{1,p_1}, y_{2,p_2}, y_{3,p_3})$ with exactly one line segment emenating from it; $S$ similarly contains the point $(x, y_{1,p_1+1}, y_{2,p_2}, y_{3,p_3})$ with exactly one line segment emenating from it.  A local analysis (using the non-degeneracy hypothesis that the vertices of $\gamma_1,\gamma_2,\gamma_3$ all have distinct $x$ coordinates) then reveals that every other point $(x',y'_1,y'_2,y'_3)$ in $S$ is either an interior point of a line segment in $S$ (and avoids all other line segments comprising $S$), or else is a vertex that is the endpoint of exactly two edges in $S$; this claim is most delicate in the case where $x' = x_1$, in which the curves $\gamma'_{x,2,p_2}$ and $\gamma'_{x,3,p_3}$ have terminated, but $\gamma_{x,1,p_1}$ leaves $(x',y_1)$ in two leftward directions, thus again forming two edges in $S$ (see Figure \ref{turning}).  Because of this, there must be a path $t \mapsto (X'(t),Y'_1(t), Y'_2(t), Y'_3(t))$ in $S$ from $(x, y_{1,p_1}, y_{2,p_2}, y_{3,p_3})$ to $(x, y_{1,p_1+1}, y_{2,p_2}, y_{3,p_3})$ (cf. Remark \ref{dyn}).  By the hypothesis of Conjecture \ref{sai}, we must have
$$ Y'_1(t) + Y'_2(t) + Y'_3(t) \neq 0$$
for all $t$.  In particular, we conclude that the sums
$$ y_{1,p_1} + y_{2,p_2} + y_{3,p_3}, y_{1,p_1+1} + y_{2,p_2} + y_{3,p_3}$$
have the same sign.  A similar argument (using the connected component of $\gamma'_{x,i,p_i}$ containing $(x,y_{i,p_i+1})$ rather than $(x,y_{i,p_i})$ as appropriate) shows more generally that the sums
$$ y_{1,p_1} + y_{2,q_2} + y_{3,q_3}, y_{1,p_1+1} + y_{2,q_2} + y_{3,q_3}$$
have the same sign for $q_2 \in \{p_2,p_2+1\}$ and $q_3 \in \{p_3,p_3+1\}$ (this claim is trivially true when $q_2 = k_2+1$ or $q_3=k_3+1$.  By Definition \ref{ncs-def}, we conclude that the pairs $\{y_{1,p_1}, y_{1,p_1+1}\}$, $\{y_{2,p_2}, y_{2,p_2+1}\}$, $\{y_{3,p_3}, y_{3,p_3+1}\}$ have non-crossing sums, giving (ii) in the case that $p_1,p_2,p_3$ are all odd; the claim when $p_1,p_2,p_3$ are all even are proven similarly (using the convention $y_{i,0}=-\infty$ instead of $y_{i,k_i}=-\infty$, working to the left of $\{x\} \times \R$ rather than to the right, and using \eqref{ineq-1} in place of \eqref{ineq-2}) and is omitted.  This completes the derivation of Conjecture \ref{sai} from Conjecture \ref{adf}, and establishes Theorem \ref{aqv}.

\begin{figure} [t]
\centering
\includegraphics[width=3in]{./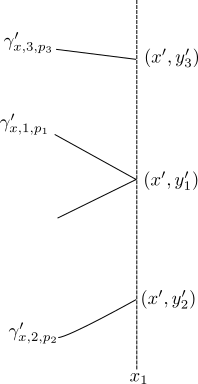}
\caption{The local behaviour of a point $(x',y'_1,y'_2,y'_3)$ in $S$ when $x'$ equals $x_1$.}
\label{turning}
\end{figure}

\section{Some special cases of Conjecture \ref{adf}}

From Theorem \ref{aqv}, we see that any counterexample to Conjecture \ref{adf} can be converted to counterexamples for Conjectures \ref{sai}, \ref{area-1d}, and \ref{area-ineq}, and we believe it likely that such counterexamples, should they exist, could then be modified to give counterexamples to Conjectures \ref{qpspp}, \ref{pspp}, \ref{pspps}, and hence the original square peg conjecture (Conjecture \ref{spp}).  On the other hand, after extensive testing of examples, the author is now inclined to believe that Conjecture \ref{adf} is true, and a proof of this conjecture is likely to lead to an approach to establish Conjecture \ref{area-ineq} (and hence Conjectures \ref{qpspp}, \ref{pspp}, \ref{pspps}) and perhaps even Conjecture \ref{spp}.

We do not have a proof of Conjecture \ref{adf} in full generality, however we can verify some special cases.  Firstly, we observe the following analogue of Theorem \ref{ook} for Conjecture \ref{sai}:

\begin{theorem}\label{koo}  Conjecture \ref{sai} is true when one of the curves $\gamma_i$ is the graph $\gamma_i = \mathtt{Graph}_f$ of a piecewise linear function $f: \Z/L\Z \to R$.
\end{theorem}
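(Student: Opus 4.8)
The plan is to exploit the fact that one of the three curves, say $\gamma_3=\mathtt{Graph}_f$, is a graph, in order to reduce the triple-sum condition of Conjecture \ref{sai} to a two-curve disjointness condition, after which the argument runs parallel to the proof of Lemma \ref{repo}. (By the symmetry of both the hypothesis and the conclusion in $\gamma_1,\gamma_2,\gamma_3$ there is no loss of generality in assuming it is $\gamma_3$ that is a graph.) The first step is to introduce the piecewise-linear homeomorphism $\Phi\colon\mathtt{Cyl}_L\to\mathtt{Cyl}_L$ defined by $\Phi(x,y)\coloneqq(x,-y-f(x))$ --- that is, a vertical shear by $-f$ followed by the reflection $y\mapsto -y$ --- and to set $\delta\coloneqq\Phi\circ\gamma_2$. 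Since $\gamma_2$ is a simple closed polygonal path and $\Phi$ is a piecewise-linear homeomorphism preserving $x$-coordinates, $\delta$ is again a simple closed polygonal path, homologous to $\mathtt{Graph}_{0,L}$ (the net winding in the $x$-direction is unchanged). The key observation is that over a fibre $\{x\}\times\R$ a point of $\gamma_3$ is forced to be $(x,f(x))$, so the relation $y_1+y_2+y_3=0$ with $(x,y_i)\in\gamma_i(\R/L\Z)$ becomes $y_1=-y_2-f(x)$, i.e. $(x,y_1)=\Phi(x,y_2)$. Thus the hypothesis of Conjecture \ref{sai} --- that no common fibre carries points $(x,y_i)\in\gamma_i(\R/L\Z)$ with $y_1+y_2+y_3=0$ --- is exactly the statement that the sets $\gamma_1(\R/L\Z)$ and $\delta(\R/L\Z)$ are disjoint.

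The next step applies the Jordan curve theorem on $\mathtt{Cyl}_L$. The curves $\gamma_1$ and $\delta$ are disjoint simple closed polygonal paths, each homologous to $\mathtt{Graph}_{0,L}$; each therefore separates $\mathtt{Cyl}_L$ into a component containing all points of sufficiently large positive $y$-coordinate and a component containing all points of sufficiently large negative $y$-coordinate. Since $\delta$ is connected and disjoint from $\gamma_1$, it lies entirely in one of these two components, so one of $\gamma_1,\delta$ lies entirely ``above'' the other; the region $\Omega\subset\mathtt{Cyl}_L$ lying between them is then a non-empty (hence positive-area) bounded polygonal region whose oriented boundary is $\pm(\delta-\gamma_1)$, both curves carrying the orientation homologous to $\mathtt{Graph}_{0,L}$. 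Stokes' theorem then gives
\[
\int_\delta y\ dx-\int_{\gamma_1}y\ dx=\pm\int_{\partial\Omega}y\ dx=\mp\int_\Omega dx\wedge dy=\mp|\Omega|\neq 0 .
\]

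The final step is pure bookkeeping. Writing $\gamma_2(t)=(x_2(t),y_2(t))$, the parameterisation $\delta(t)=(x_2(t),-y_2(t)-f(x_2(t)))$ yields $\int_\delta y\ dx=-\int_{\gamma_2}y\ dx-\int_{\gamma_2}f(x)\ dx$, where $\int_{\gamma_2}f(x)\ dx$ denotes the integral of the $1$-form $f(x)\ dx$ over the $1$-cycle $\gamma_2$. Because $f(x)\ dx$ is closed (locally exact) on $\mathtt{Cyl}_L$, this integral depends only on the homology class of $\gamma_2$, hence equals $\int_{\mathtt{Graph}_{0,L}}f(x)\ dx=\int_0^L f(x)\ dx$, which by Example \ref{integ} equals $\int_{\gamma_3}y\ dx$. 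Substituting into the inequality of the previous step gives $\int_{\gamma_1}y\ dx+\int_{\gamma_2}y\ dx+\int_{\gamma_3}y\ dx\neq 0$, i.e. $\int_{\gamma_1+\gamma_2+\gamma_3}y\ dx\neq0$, which is the conclusion of Conjecture \ref{sai}. I do not expect a substantial obstacle in this argument: once the shear $\Phi$ is in hand everything is routine, and the only point deserving a little care is the verification that two disjoint simple closed curves homologous to $\mathtt{Graph}_{0,L}$ genuinely cobound a non-empty region of $\mathtt{Cyl}_L$ (one lying entirely above the other), which one obtains by lifting to $\R^2$, using the boundedness of the curves in $y$ together with $L\Z$-equivariance, and invoking the planar Jordan curve theorem.
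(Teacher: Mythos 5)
Your proposal is correct and takes essentially the same route as the paper: the paper first shears $\gamma_2$ by $f$ (reducing to $f=0$, i.e.\ $\gamma_3 = \mathtt{Graph}_{0,L}$), then observes that the hypothesis forces $\gamma_1$ to be disjoint from the reflection of $\gamma_2$ across the $x$-axis, and invokes the Jordan curve theorem and Stokes' theorem; you merely combine the shear and the reflection into the single map $\Phi(x,y)=(x,-y-f(x))$ and spell out the bookkeeping (using that $f(x)\,dx$ is closed and that $\gamma_2$ is homologous to $\mathtt{Graph}_{0,L}$, so $\int_{\gamma_2}f(x)\,dx=\int_{\gamma_3}y\,dx$). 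Your slightly more explicit computation actually has the virtue of getting the sign of the shear unambiguously right.
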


\begin{proof}  Suppose that $\gamma_3 = \mathtt{Graph}_f$.  By replacing $\gamma_3$ with $\mathtt{Graph}_{0,L}$ and $\gamma_2$ with the transformed 
polygonal path $\{ (x,y-f(x)): (x,y) \in \gamma_2(\Z/L\Z)\}$, we may assume without loss of generality that $f=0$.  The hypothesis of Conjecture \ref{sai} then ensures that the reflection $\tilde \gamma_2$ of $\gamma_2$ across the $x$ axis is disjoint from $\gamma_1$, hence by the Jordan curve theorem and Stokes' theorem as before we have
$$ \int_{\tilde \gamma_2} y\ dx \neq \int_{\gamma_1} y\ dx$$
giving Conjecture \ref{sai} in this case since $\int_{\tilde \gamma_2} y\ dx = -\int_{\gamma_2} y\ dx$.
\end{proof}

From this theorem and the construction used in the proof of Theorem \ref{aqv}, we see that Conjecture \ref{adf} holds when one of the $k_i$, say $k_3$, is equal to $1$.
Of course, as Conjecture \ref{adf} is largely a combinatorial conjecture, one expects to also be able to verify the $k_3=1$ case of Conjecture \ref{adf} by a direct combinatorial argument, without explicit invocation of the Jordan curve theorem.  We can do this by developing some combinatorial analogues of the Jordan curve theorem that are valid even when $k_1,k_2,k_3 > 1$.  For any $i \in \{1,2,3\}$ and $y \in [-\infty,+\infty]$, define the \emph{winding number} $W_i(y)$ by the Alexander numbering rule \cite{alex}
\begin{equation}\label{Wi-def}
\begin{split}
W_i(y) &\coloneqq \frac{1}{2} + \frac{1}{2} \sum_{j=1}^{k_i} (-1)^{j-1} \operatorname{sgn}( y_{i,j} - y ) \\
&= \frac{1}{2} \sum_{j=1}^{k_i} (-1)^{j-1} (1 + \operatorname{sgn}( y_{i,j} - y ))
\end{split}
\end{equation}
(where we use the hypothesis that $k_i$ is odd), thus $W_i$ is a half-integer on $y_{i,1},\dots,y_{i,k_i}$, a locally constant integer outside of these points, and jumps by $\pm \frac{1}{2}$ when one perturbs off of one of the $y_{i,j}$ in either direction.  Also observe that $W_i(y)$ equals $0$ near $+\infty$, and $1$ near $-\infty$.  From Fubini's theorem we can relate the winding number to the alternating sum $\sum_{j=1}^{k_i} (-1)^{j-1} y_{i,j}$ by the identity
\begin{equation}\label{fubini}
\int_{-T}^\infty W_i(y)\ dy = \sum_{j=1}^{k_i} (-1)^{j-1} y_{i,j} + T
\end{equation}
which holds for all sufficiently large $T$.  A similar argument gives
\begin{equation}\label{fubini-sym}
\int_{-T}^\infty (1-W_i(-y))\ dy = -\sum_{j=1}^{k_i} (-1)^{j-1} y_{i,j} + T
\end{equation}
again for sufficiently large $T$.

We can then use the hypothesis (i) of Conjecture \ref{adf} to give

\begin{lemma}[Combinatorial Jordan curve theorem]\label{cjct}  Suppose that the hypotheses of Conjecture \ref{adf} hold.  Let $i=1,2,3$.  Then one has $W_i(y_{i,j}) = \frac{1}{2}$ for all $j=1,\dots,k_i$, and $W_i(y) \in \{0,1\}$ for all $y$ in $[-\infty,+\infty] \backslash \{y_{i,1},\dots,y_{i,k+1}\}$.
\end{lemma}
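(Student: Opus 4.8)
The plan is to treat the two assertions separately, reducing the first to a combinatorial statement about the order structure of $y_{i,1},\dots,y_{i,k_i}$, and then deducing the second formally from the first.

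That the second assertion, $W_i(y_{i,j})=\tfrac12$, follows from the first is immediate: from \eqref{Wi-def} and $\operatorname{sgn}(0)=0$, the value $W_i(y_{i,j})$ is the average of the one-sided limits $\lim_{y\to y_{i,j}^{-}}W_i(y)$ and $\lim_{y\to y_{i,j}^{+}}W_i(y)$; the first assertion places both limits in $\{0,1\}$, and they differ by $(-1)^j\neq0$, so they are the two elements of $\{0,1\}$ and their average is $\tfrac12$. So it remains to show $W_i(y)\in\{0,1\}$ for $y\notin\{y_{i,1},\dots,y_{i,k_i}\}$. Since $k_i$ is odd we have $\sum_{j=1}^{k_i}(-1)^{j-1}=1$, so at a generic $y$ one has $W_i(y)=\sum_{j\colon y_{i,j}>y}(-1)^{j-1}$; letting $\sigma$ be the permutation with $y_{i,\sigma(1)}<\dots<y_{i,\sigma(k_i)}$, the values assumed by $W_i$ off $\{y_{i,1},\dots,y_{i,k_i}\}$ are exactly $1-\sum_{\ell=1}^{r}(-1)^{\sigma(\ell)-1}$ for $r=0,1,\dots,k_i$. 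This is a walk with $\pm1$ steps starting at $1$ and ending at $0$; it stays in $\{0,1\}$ if and only if its steps alternate, i.e.\ if and only if the parities of $\sigma(1),\dots,\sigma(k_i)$ strictly alternate (necessarily beginning and ending with an odd index). Thus the lemma reduces to the claim that, when $y_{i,1},\dots,y_{i,k_i}$ are listed in increasing order of value, the parities of their indices alternate — equivalently, that no two value-consecutive members of $\{-\infty=y_{i,0},y_{i,1},\dots,y_{i,k_i}\}$ have indices of the same parity.

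To prove this claim I would use hypothesis (i) of Conjecture \ref{adf} which, via Example \ref{ncs}, asserts precisely that the two matchings $M_{\mathrm{odd}}=\{\{y_{i,p},y_{i,p+1}\}:p\text{ odd}\}$ and $M_{\mathrm{even}}=\{\{y_{i,p},y_{i,p+1}\}:p\text{ even}\}$ of $\{-\infty,y_{i,1},\dots,y_{i,k_i}\}$ (with $y_{i,0}=y_{i,k_i+1}=-\infty$) are each non-crossing for the value order — the combinatorial shadow of the polygonal meander $\gamma_i$ of Definition \ref{recipe}. The cleanest way to finish is topological: $\gamma_i$ is a simple closed curve homologous to $\mathtt{Graph}_{0,L}$ — this is exactly the simplicity lemma proved, from hypothesis (i), in the forward direction of Theorem \ref{aqv} — so by the Jordan curve theorem its winding number about any point off it lies in $\{0,1\}$ (with the appropriate orientation), and the crossing-number formula for the winding number, applied to the upward vertical ray from $(0,y)$ together with the fact that $\gamma_i$ meets $\{0\}\times\R$ transversally exactly at $y_{i,1},\dots,y_{i,k_i}$ and moves rightward there precisely at the odd-indexed crossings, identifies this winding number with $W_i(y)$. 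In keeping with the combinatorial spirit of this section one can instead argue by induction on $k_i$: a non-crossing matching always contains an arc joining two value-consecutive points, so one deletes an innermost $M_{\mathrm{odd}}$-arc $\{y_{i,p},y_{i,p+1}\}$ with $p<k_i$ (one such exists, as the arc $\{y_{i,k_i},-\infty\}$ sitting at the global minimum cannot be innermost once $k_i>1$), relabels the remaining $k_i-2$ indices in the obvious order-preserving fashion, and applies the inductive hypothesis to the smaller system.

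I expect the delicate point, for the inductive route, to be twofold: first, checking that deleting an innermost $M_{\mathrm{odd}}$-arc — which forces the two $M_{\mathrm{even}}$-arcs meeting it to be merged into one — leaves the reduced data satisfying hypothesis (i), i.e.\ that the merged $M_{\mathrm{even}}$-arc remains non-crossing with all the others; and second, verifying that the value immediately below an innermost $M_{\mathrm{odd}}$-arc has even index, which is exactly what makes the re-insertion of $\{y_{i,p},y_{i,p+1}\}$ into the (inductively alternating) smaller order preserve, rather than destroy, the alternation. Both are consequences of the non-crossing of the $M_{\mathrm{odd}}$- and $M_{\mathrm{even}}$-arcs incident to $y_{i,p-1},y_{i,p},y_{i,p+1},y_{i,p+2}$, but the case analysis is somewhat fiddly, which is why the Jordan-curve route — where this bookkeeping is carried out automatically — is the one I would write out in the body, relegating the purely combinatorial formulation to a remark.
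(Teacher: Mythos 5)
Your reduction of the lemma to a clean combinatorial statement (the parities of the indices of $y_{i,1},\dots,y_{i,k_i}$ must alternate in value order) is correct, and your observation that the two clauses of the lemma are equivalent, given the jump structure of $W_i$, is also fine; but neither of the two routes you offer for the alternation claim matches what the paper actually does, and neither is fully satisfactory on its own terms. The topological route re-imports the genuine Jordan curve theorem through the polygonal meander $\gamma_i$ of Definition~\ref{recipe} and the simplicity lemma from the forward direction of Theorem~\ref{aqv}; besides being considerably heavier machinery than the lemma calls for (the paper explicitly frames this as a \emph{combinatorial analogue} of JCT, to be used as a tool alongside the topological arguments rather than derived from them), it also tacitly inherits the auxiliary perturbation from Section~5 needed to make the differences $|y_{i,p+1}-y_{i,p}|$ distinct, and the phrase ``winding number'' needs some care on the cylinder, where $\gamma_i$ is homologous to the generator rather than null-homotopic — what one actually uses is the signed crossing number of a ray with $\gamma_i$, well-defined because the generator has zero self-intersection. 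The inductive route you sketch is the natural purely combinatorial argument, but you do not carry it out, and you concede the delicate steps (the merged $M_{\mathrm{even}}$-arc staying non-crossing, and the parity of the value immediately below an innermost arc) are ``somewhat fiddly'' — so as written the proposal has a genuine gap.

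The paper's proof is shorter and bypasses the value-ordering permutation entirely. It observes that the statement ``$\{y_{i,p},y_{i,p+1}\}$ and $\{-y_{i,q},-y_{i,q+1}\}$ have non-crossing sums'' is, by Definition~\ref{ncs-def}(iii), precisely the identity
$\sum_{j=q,q+1}(-1)^{j-1}\operatorname{sgn}(y_{i,p}-y_{i,j}) = \sum_{j=q,q+1}(-1)^{j-1}\operatorname{sgn}(y_{i,p+1}-y_{i,j})$,
which also holds trivially when $q=p$; summing over all $q$ of the same parity as $p$ (the $j=0$ and $j=k_i+1$ boundary terms cancel) yields $W_i(y_{i,p})=W_i(y_{i,p+1})$ directly from \eqref{Wi-def}. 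Thus $W_i$ is constant on $\{y_{i,1},\dots,y_{i,k_i}\}$, and the constant is pinned to $\tfrac12$ by evaluating at the maximal $y_{i,p}$ (together with the known limiting values $W_i(+\infty)=0$, $W_i(-\infty)=1$). This gives the first assertion, and then the second follows exactly as you note — though the paper deduces in the opposite direction, first assertion to second, which is just as immediate. Both implications are fine; the substantive difference is that the paper turns the non-crossing hypothesis directly into a telescoping identity for $W_i$, whereas you pass through the value ordering and then need external input to close the argument.
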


\begin{proof}  Because $W_i$ is locally constant away from $\{y_{i,1},\dots,y_{i,k+1}\}$ and jumps by $\pm \frac{1}{2}$ when it reaches any of the $y_{i,j}$, it suffices to establish the first claim.  Let $1 \leq p < k_i$.  From Conjecture \ref{adf}(i) we have
$$ \sum_{j = q, q+1} (-1)^{j-1} \operatorname{sgn}( y_{i,p} - y_{i,j} ) = \sum_{j = q, q+1} (-1)^{j-1} \operatorname{sgn}( y_{i,p+1} - y_{i,j} ) $$
for all $0 \leq q \leq k_1$ distinct from $p$ with the same parity as $p$ (using the conventions $y_{i,0}=y_{i,k_1+1} = -\infty$).  Direct inspection shows that the claim also holds for $q=p$.  Summing over $q$, and noting that the contributions of $j=0$ or $j=k_1+1$ are the same on both sides, we conclude that
$$ \sum_{j = 1}^{k_1} (-1)^{j-1} \operatorname{sgn}( y_{i,p} - y_{i,j} ) = \sum_{j = 1}^{k_1} (-1)^{j-1} \operatorname{sgn}( y_{i,p+1} - y_{i,j} ) $$
and hence
$$ W_i(y_{i,p}) = W_i(y_{i,p+1})$$
for all $1 \leq p < k_i$.  Direct computation also shows that $W_i(y_{i,p})=\frac{1}{2}$ when $1 \leq p \leq k_1$ maximises $y_{i,p}$, and the claim follows.
\end{proof}

Next, for distinct $i,i' \in \{1,2,3\}$ and $y \in [-\infty,+\infty]$, we define the further winding number $W_{ii'}(y)$ by the similar formula
\begin{equation}\label{Wii-def}
\begin{split}
W_{ii'}(y) &\coloneqq \frac{1}{2} + \frac{1}{2} \sum_{j=1}^{k_i} \sum_{j'=1}^{k_{i'}} (-1)^{j+j'} \operatorname{sgn}( y_{i,j} + y_{i',j'} - y ) \\
&= \frac{1}{2} \sum_{j=1}^{k_i} \sum_{j'=1}^{k_{i'}} (-1)^{j+j'} (1 + \operatorname{sgn}( y_{i,j} + y_{i',j'} - y )).
\end{split}
\end{equation}
As before, $W_{ii'}(y)$ will be a locally constant integer away from the sums $y_{i,j} + y_{i',j'}$, that equals $0$ for sufficiently large positive $y$ and $1$ for sufficiently large negative $y$.  From Fubini's theorem we have the analogue
\begin{equation}\label{fubini-2}
\int_{-T}^\infty W_{ii'}(y)\ dy = \sum_{j=1}^{k_i} (-1)^{j-1} y_{i,j} + \sum_{j=1}^{k_{i'}} (-1)^{j-1} y_{i',j'} + T
\end{equation}
for sufficiently large $T$.  Curiously, one has the convolution identity
$$ W_{ii'}' = W_i' * W_{i'}'$$
where the primes denote distributional derivatives, although the author was not able to make much use of this identity. The winding numbers $W_{ii'}$ also have some resemblance to the Steinberg formula \cite{steinberg} for the multiplicity of irreducible representations in a tensor product, although this is likely to be just a coincidence.

The hypothesis (ii) of Conjecture \ref{adf} allows us to make the winding number $W_{ii'}$ vanish at some points, and also give some control on the complementary winding number $W_{i''}$:

\begin{proposition}\label{inclusion}  Suppose that the hypotheses of Conjecture \ref{adf} hold.  Let $i,i',i''$ be distinct elements of $\{1,2,3\}$. 
\begin{itemize}
\item[(i)] One has $W_{ii'}(-y_{i'',j}) = 0$ for all $j=0,\dots,k_3+1$.  
\item[(ii)]  If $0 \leq p \leq k_i$ and $0 \leq q \leq k_{i'}$ have the same parity, then one has
\begin{equation}\label{alternate-1}
W_{i''}( - y_{i,p} - y_{i',b} ) = W_{i''}( - y_{i,p+1} - y_{i',b} )
\end{equation}
for $b=q,q+1$ if $|y_{i,p} - y_{i,p+1}| \leq |y_{i',q} - y_{i',q+1}|$, and
\begin{equation}\label{alternate-2}
W_{i''}( - y_{i,a} - y_{i',q} ) = W_{i''}( - y_{i,a} - y_{i',q+1} )
\end{equation}
for $a=p,p+1$ if $|y_{i,p} - y_{i,p+1}| \geq |y_{i',q} - y_{i',q+1}|$.
\end{itemize}
\end{proposition}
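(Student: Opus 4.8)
The strategy is to work throughout with the Alexander numberings $W_{ii'}$ and $W_{i''}$, feeding them the cancellation identity \eqref{cancellation} of Conjecture \ref{adf}(ii) (via Definition \ref{ncs-def}(iii)) together with the sign analysis of Example \ref{diff}. By the evident symmetries of \eqref{Wi-def}, \eqref{Wii-def} and Remark \ref{inv} I may take $(i,i',i'')=(1,2,3)$. I may also assume the differences $|y_{m,p+1}-y_{m,p}|$, over all $m$ and all valid $p$, to be pairwise distinct: the hypotheses of Conjecture \ref{adf} are open, distinctness is generic, and every quantity entering the conclusion is a value of $W_{ii'}$ or $W_{i''}$ at a point of the form $-y_{m,p}-y_{m',q}$ or $-y_{m,p}$ that, by Definition \ref{ncs-def}(ii) applied to a suitable triple, never meets the relevant jump set and is therefore locally constant under such a perturbation.

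For part (i) I would fix $j$, say of odd parity, and sum the cancellation identity of Conjecture \ref{adf}(ii) over all triples $(p_1,p_2,p_3)=(r,s,j)$ with $r$ running over the odd indices in $[0,k_1]$ and $s$ over the odd indices in $[0,k_2]$. Since each $a\in\{1,\dots,k_1+1\}$ occurs for exactly one pair with $a\in\{r,r+1\}$ (and likewise for $b$), the double sum collapses to $\sum_{a=1}^{k_1+1}\sum_{b=1}^{k_2+1}\sum_{c\in\{j,j+1\}}(-1)^{a+b+c}\operatorname{sgn}(y_{1,a}+y_{2,b}+y_{3,c})=0$. The terms with $a=k_1+1$ or $b=k_2+1$ invoke the convention $y=-\infty$ and contribute, in total, only the explicit constant $\sum_{c\in\{j,j+1\}}(-1)^c$; recognising the remainder as $\sum_{c\in\{j,j+1\}}(-1)^c(2W_{12}(-y_{3,c})-1)$, the two constants cancel and the identity becomes $\sum_{c\in\{j,j+1\}}(-1)^c W_{12}(-y_{3,c})=0$, i.e.\ $W_{12}(-y_{3,j})=W_{12}(-y_{3,j+1})$. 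Running this for every $j$ (with even-parity triples when $j$ is even) yields the chain $W_{12}(-y_{3,0})=W_{12}(-y_{3,1})=\dots=W_{12}(-y_{3,k_3+1})$, and since $y_{3,0}=y_{3,k_3+1}=-\infty$ while $W_{12}$ equals $0$ at $+\infty$ (directly from \eqref{Wii-def}), every term in the chain is $0$, which is (i).

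For part (ii), interchanging $i$ and $i'$ reduces \eqref{alternate-2} to \eqref{alternate-1}, so I would fix $b\in\{q,q+1\}$ and, after swapping $p$ with $p+1$ if necessary, set $z_1\coloneqq-y_{i,p}-y_{i',b}<z_2\coloneqq-y_{i,p+1}-y_{i',b}$. As $W_{i''}$ jumps (by $\pm\frac12$) only at the $y_{i'',c}$, and $z_1,z_2$ avoid all of these, $W_{i''}(z_1)-W_{i''}(z_2)=\sum_{c\in C}(-1)^{c-1}$ with $C\coloneqq\{c:z_1<y_{i'',c}<z_2\}$, and it remains to show this sum vanishes. The crux is: for each $c\in C$, letting $r$ be the unique index of the same parity as $p$ with $c\in\{r,r+1\}$ and applying Conjecture \ref{adf}(ii) to the triple of pairs $\{y_{i,p},y_{i,p+1}\}$, $\{y_{i',q},y_{i',q+1}\}$, $\{y_{i'',r},y_{i'',r+1}\}$, the fact that $y_{i'',c}$ lies strictly between $z_1$ and $z_2$ shows that $\{y_{i,p},y_{i,p+1}\}$ influences some triple sum, so by Example \ref{diff} it is not the smallest-difference pair; with the hypothesis $|y_{i,p}-y_{i,p+1}|<|y_{i',q}-y_{i',q+1}|$ this forces $\{y_{i'',r},y_{i'',r+1}\}$ to be the smallest pair, hence (again by Example \ref{diff}) to have no influence, which means $y_{i'',r},y_{i'',r+1}$ lie on the same side of $z_1$ and on the same side of $z_2$; in particular the partner index $c'\in\{r,r+1\}\setminus\{c\}$ is finite and again lies in $C$. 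Thus $c\mapsto c'$ is a fixed-point-free involution of $C$ pairing consecutive indices, so $\sum_{c\in C}(-1)^{c-1}=0$ and \eqref{alternate-1} follows.

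The step I expect to be the main obstacle is the treatment of the ``virtual'' endpoints $y_{m,0}=y_{m,k_m+1}=-\infty$: in part (ii), when $p\in\{0,k_i\}$ or $q\in\{0,k_{i'}\}$ (so that a difference equals $+\infty$), or when a partner pair $\{y_{i'',r},y_{i'',r+1}\}$ degenerates to one containing $-\infty$, the ``smallest-difference'' comparisons above must be reread through Definition \ref{ncs-def}, and one must check that the pairing of $C$ still goes through — for instance, that an index whose partner would be a virtual $-\infty$ endpoint can never lie in $C$, precisely because the corresponding $\{y_{i,p},y_{i,p+1}\}$ is then the smallest pair, while the genuinely infinite-difference cases should be handled directly by the summation-of-cancellations method of part (i). I should also record explicitly that the reduction to general position is legitimate, i.e.\ that none of the finitely many numbers $W_{ii'}(-y_{i'',j})$ and $W_{i''}(-y_{i,p}-y_{i',b})$ changes along the perturbation, which is exactly what Definition \ref{ncs-def}(ii) guarantees.
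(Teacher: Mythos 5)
Your treatment of part (i) is essentially identical to the paper's: sum the cancellation identity of Conjecture \ref{adf}(ii) over all triples $(p_1,p_2,p_3)$ of fixed parity with $p_3=j$, observe that the boundary terms with index $0$ or $k_m+1$ agree on both sides, and read off the chain $W_{ii'}(-y_{i'',j})=W_{ii'}(-y_{i'',j+1})$ with base value $W_{ii'}(+\infty)=0$. (Incidentally, you are right that $W_{ii'}(+\infty)=0$; the paper's ``$W_{12}(+\infty)=+1$'' in that step is a typo.)

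For part (ii) you take a genuinely different, though dual, route. The paper again sums over $r$: for each $r$ of the right parity, the non-crossing hypothesis plus Example \ref{diff} and the inequality $|y_{i,p}-y_{i,p+1}|\leq|y_{i',q}-y_{i',q+1}|$ forces one of $\{y_{i,p},y_{i,p+1}\}$, $\{y_{i'',r},y_{i'',r+1}\}$ to have no influence, whence the $r$-indexed two-term alternating sums with $p$ and with $p+1$ are equal; summing over $r$ yields \eqref{alternate-1}. You instead write $W_{i''}(z_1)-W_{i''}(z_2)$ as an alternating sum $\sum_{c\in C}(-1)^{c-1}$ over jump indices $c$ in $(z_1,z_2)$ and exhibit a fixed-point-free involution pairing $C$ into consecutive indices: if $y_{i'',c}\in(z_1,z_2)$ then $\{y_{i,p},y_{i,p+1}\}$ influences, hence is not the smallest pair, and by the hypothesis $\{y_{i',q},y_{i',q+1}\}$ is not smallest either, so $\{y_{i'',r},y_{i'',r+1}\}$ is smallest and has no influence, placing the partner index in $C$ as well. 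This buys a more directly geometric picture (pairing the crossings of the jump set) at the price of having to verify separately that no $c\in C$ pairs with a virtual index $0$ or $k_{i''}+1$, and of handling the cases $p\in\{0,k_i\}$ or $q\in\{0,k_{i'}\}$ where a difference is infinite. You flag both of these correctly (for the former, the argument you sketch — if the partner is virtual then $\{y_{i,p},y_{i,p+1}\}$ is the smallest pair, hence has no influence, contradicting $c\in C$ — is right, provided $|y_{i,p}-y_{i,p+1}|$ is finite), but you leave the infinite-difference cases to a deferred appeal to the summation method; for a complete proof these should be carried out, for instance by observing that $p\in\{0,k_i\}$ together with the hypothesis forces $q\in\{0,k_{i'}\}$, so that one of $z_1,z_2$ is $+\infty$ and the claim collapses to a statement provable by the same kind of telescoping as in (i). Your opening reduction to generic position is legitimate and is justified in the correct way, by noting that the evaluation points avoid the jump sets thanks to Definition \ref{ncs-def}(ii).
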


\begin{proof}  By permutation we may set $i=1$, $i'=2$, $i''=3$.  Suppose that $0 \leq p_1 \leq k_1$, $0 \leq p_2 \leq k_2$, $0 \leq p_3 \leq k_3$ have the same parity.  By Conjecture \ref{adf}(ii) we have
\begin{align*}
 &\sum_{j_1 = p_1,p_1+1} \sum_{j_2 = p_2,p_2+1} (-1)^{j_1+j_2} \operatorname{sgn}( y_{1,j_1} + y_{2,j_2} + y_{3,p_3} ) \\
 &\quad = \sum_{j_1 = p_1,p_1+1} \sum_{j_2 = p_2,p_2+1} (-1)^{j_1+j_2} \operatorname{sgn}( y_{1,j_1} + y_{2,j_2} + y_{3,p_3+1} );
\end{align*}
summing over $p_1,p_2$ and noting that the contributions of $j_1=0, j_1=k_1+1, j_2=0, j_2=k_2+1$ are the same on both sides we see that
$$ W_{12}(-y_{3,p_3}) = W_{12}(-y_{3,p_3+1})$$
for all $0 \leq p_3 \leq k_3$; since $W_{12}(-y_{3,0}) = W_{12}(+\infty)=+1$, we conclude (i).  

Now suppose $0 \leq p \leq k_1$ and $0 \leq q \leq k_2$ have the same parity and $|y_{1,p} - y_{1,p+1}| \leq |y_{2,q} - y_{2,q+1}|$, and let $0 \leq r \leq k_3$ have the same parity as $p$ and $q$.  From Conjecture \ref{adf}(ii) we see that the pairs $\{ y_{1,p}, y_{1,p+1}\}$, $\{y_{2,q}, y_{2,q+1}\}$, $\{ y_{3,r}, y_{3,r+1}\}$ have non-crossing sums, which by Example \ref{diff} implies that at least one of the pairs $\{ y_{1,p}, y_{1,p+1}\}$, $\{ y_{3,r}, y_{3,r+1}\}$ have no influence on the triple sums.  This implies that
$$
\sum_{j = r,r+1} (-1)^{j-1} \operatorname{sgn}( y_{1,p} + y_{2,b} + y_{3,j} ) 
=  \sum_{j = r,r+1} (-1)^{j-1} \operatorname{sgn}( y_{1,p+1} + y_{2,b} + y_{3,j} )
$$
for $b=q,q+1$; summing in $r$ we obtain \eqref{alternate-1}.  The claim \eqref{alternate-2} is proven similarly.
\end{proof}

This proposition is already enough to reprove the $k_3=1$ case of Conjecture \ref{adf} as follows.  By adding $y_{3,1}$ to all of the $y_{2,j}$ and then sending $y_{3,1}$ to zero, we may assume that $y_{3,1}=0$.  Then we have $W_{13}(y) = W_1(y)$ and $W_{23}(y) = W_2(y)$ for all $y$, and hence by Proposition \ref{inclusion} we have $W_1(-y_{2,j})=0$ for $j=1,\dots,k_2$ and $W_2(-y_{1,j})=0$ for $j=1,\dots,k_1$.  We conclude that on the set $\{ y \in \R: W_1(y) = +1\}$, the function $W_2(-y)$ is locally constant and vanishes at the endpoints, thus we have the inclusion
\begin{equation}\label{peop}
 \{ y \in \R: W_1(y) = +1 \} \subset \{ y \in \R: W_2(-y) = 0 \}.
\end{equation}
This inclusion is strict because the endpoints $y_{1,j_1}$ of the former set cannot match any of the endpoints $-y_{2,j_2}$ of the latter set due to the non-vanishing of the sums $y_{1,j_1} + y_{2,j_2} + y_{3,1} = y_{1,j_1} + y_{2,j_2}$.  We conclude (using Lemma \ref{cjct}) that for sufficiently large $T$, we have
$$ \int_{-T}^\infty W_1(y)\ dy < \int_{-T}^\infty (1-W_2(-y))\ dy$$
and the desired claim \eqref{yio} then follows from \eqref{fubini}, \eqref{fubini-sym}.

\begin{remark}  The above arguments crucially used the hypothesis in Conjecture \ref{adf}(i).  Indeed, the conjecture is false without this hypothesis; a simple counterexample is when $k_1=3$, $k_2=k_3=1$, $y_{1,1} = -1$, $y_{1,2} = -4$, $y_{1,3} = -2$, and $y_{2,1}=y_{3,1}=0$.
\end{remark}

We can partially extend these arguments to cover the cases $k_1,k_2,k_3 > 1$ as follows.  We use Lemma \ref{cjct} to partition
\begin{equation}\label{dos}
\{0,\dots,k_1+1\} \times \{0,\dots,k_2+1\} = V_{12}^0 \cup V_{12}^{+1}
\end{equation}
where $V_{12}^{+1}$ (resp. $V_{12}^0$) consists of those pairs $(p,q)$ for which $W_3(-y_{1,p}-y_{2,q}) = +1$ (resp. $W_3(-y_{1,p}-y_{2,q})=0$). We will work primarily on $V_{12}^{+1}$, although much of the analysis below also applies to $V_{12}^0$.  The set $V_{12}^0$ is a combinatorial analogue of the compact set $K$ in the end of Section \ref{quad}, while $V_{12}^{+1}$ plays the role of the $1$-boundaries $\partial (\pi_3)_* U$ that avoid this compact set.

The set $V_{12}^{+1}$ avoids the boundary of $\{0,\dots,k_1+1\} \times \{0,\dots,k_2+1\}$ and is thus actually a subset of $\{1,\dots,k_1\} \times \{1,\dots,k_2\}$.  We place a directed graph $G_{12}^{+1} = (V_{12}^{+1}, E_{12}^{+1})$ on the vertex set $V_{12}^{+1}$ as follows.  If $0 \leq p \leq k_1$ and $0 \leq q \leq k_2$ have the same parity and $|y_{1,p}-y_{1,p+1}| \leq |y_{2,q}-y_{2,q+1}|$, we connect $(p,b)$ to $(p+1,b)$ whenever $b \in \{q,q+1\}$ is odd with $(p,b) \in V_{12}^{+1}$, and connect $(p+1,b)$ to $(p,b)$ whenever $b \in \{q,q+1\}$ is even with $(p,b) \in V_{12}^{+1}$.  If instead $p,q$ have the same parity and $|y_{1,p}-y_{1,p+1}| > |y_{2,q}-y_{2,q+1}|$, we connect $(a,q)$ to $(a,q+1)$ whenever $a \in \{p,p+1\}$ is odd with $(a,q) \in V_{12}^{+1}$ and connect $(a,q+1)$ to $(a,q)$ whenever $a \in \{p,p+1\}$ is even with $(a,q) \in V_{12}^{+1}$.  By Lemma \ref{inclusion}(ii), this construction only produces edges that start and end in $V_{12}^{+1}$; indeed, every point $(a,b)$ in $\{p,p+1\} \times \{q,q+1\} \cap V_{12}^{+1}$ will be connected to another point in this set, either by an outgoing edge (if $a+b$ has the opposite parity to $p$ or $q$) or an incoming edge (if $a+b$ has the same parity as $p$ or $q$).  Applying this procedure to each square $\{p,p+1\} \times \{q,q+1\}$ with $0 \leq p \leq k_1$ and $0 \leq q \leq k_2$ the same parity, one obtains a directed graph $G_{12}^{+1} = (V_{12}^{+1}, E_{12}^{+1})$ in which each vertex has exactly one outgoing edge and one incoming edge; thus $G_{12}^{+1}$ decomposes into disjoint simple directed cycles.  Any one of these cycles $\gamma$ can enter a vertical line $\{a\} \times \{0,\dots,k_2+1\}$ from the left only when the second coordinate is odd, and from the right only when the second coordinate is even; thus $\gamma$ will intersect such a vertical line at odd second coordinates the same number of times as at even second coordinates; that is to say
$$ \sum_{b: (a,b) \in \gamma} (-1)^b = 0.$$
Similarly for every horizontal line, thus
$$ \sum_{a: (a,b) \in \gamma} (-1)^a = 0$$
for all $0 \leq b \leq k_2+1$.  As a consequence, we have
$$ \sum_{(a,b) \in \gamma} (-1)^{a+b} (y_{1,a} + y_{2,b}) = 0$$
for each cycle $\gamma$, and hence on summing in $\gamma$
$$ \sum_{(a,b) \in V_{12}^{+1}} (-1)^{a+b} (y_{1,a} + y_{2,b}) = 0$$
and hence by \eqref{dos}
\begin{equation}\label{sam}
 \sum_{(a,b) \in V_{12}^0} (-1)^{a+b} (y_{1,a} + y_{2,b}) = \sum_{j=1}^{k_1} (-1)^{j-1} y_{1,j} + \sum_{j=1}^{k_2} (-1)^{j-1} y_{2,j}.
\end{equation}

Next, we claim the identity
\begin{equation}\label{mix}
 \sum_{(a,b) \in V_{12}^{+1}} (-1)^{a+b} \operatorname{sgn}( y_{1,a} + y_{2,b} + y_{3,r} ) = 0
\end{equation}
for all $0 \leq r \leq k_3+1$.  This is certainly the case for $r=0$, so it suffices to show that
$$ \sum_{(a,b) \in V_{12}^{+1}} (-1)^{a+b} \operatorname{sgn}( y_{1,a} + y_{2,b} + y_{3,r} ) = \sum_{(a,b) \in V_{12}^{+1}} (-1)^{a+b} ( \operatorname{sgn}( y_{1,a} + y_{2,b} + y_{3,r+1} ) $$
for all $0 \leq r \leq k_3$.  Fix such a $r$.  By breaking up $V_{12}^{+1}$ into squares, it suffices to show that
\begin{equation}\label{vam}
\begin{split}
& \sum_{(a,b) \in \{ p,p+1\} \times \{q,q+1\} \cap V_{12}^{+1}} (-1)^{a+b} \operatorname{sgn}( y_{1,a} + y_{2,b} + y_{3,r} ) \\
&\quad = \sum_{(a,b) \in \{ p,p+1\} \times \{q,q+1\} \cap V_{12}^{+1}} (-1)^{a+b} \operatorname{sgn}( y_{1,a} + y_{2,b} + y_{3,r+1} ) 
\end{split}
\end{equation}
whenever $0 \leq p \leq k_1$ and $0 \leq q \leq k_2$ have the same parity as $r$.   Suppose first that $|y_{1,p+1}-y_{1,p}| \leq |y_{2,q+1}-y_{2,q}|$, then by Lemma \ref{inclusion}(ii), the set $\{ p,p+1\} \times \{q,q+1\} \cap V_{12}^{+1}$ is the union of horizontal lines $\{(p,b), (p+1,b)\}$.  It then suffices to show that for each such line, we have
\begin{equation}\label{mav}
 \sum_{a \in \{p,p+1\}; c \in \{r',r'+1\}} (-1)^{a+c} \operatorname{sgn}( y_{1,a} + y_{2,b} + y_{3,c}) = 0
\end{equation}
for all $0 \leq r' \leq k_3$ with the same parity as $p,q$; but from Conjecture \ref{adf}(ii) and Example \ref{diff}, the sign of the triple sums of $\{ y_{1,p}, y_{1,p+1}\}$, $\{y_{2,q}, y_{2,q+1}\}$, $\{ y_{3,r'}, y_{3,r'+1}\}$ are not influenced by one of $\{ y_{1,p}, y_{1,p+1}\}$ or $\{ y_{3,r'}, y_{3,r'+1}\}$, giving \eqref{mav}.  The case when $|y_{1,p+1}-y_{1,p}| > |y_{2,q+1}-y_{2,q}|$ is treated similarly (using vertical lines in place of horizontal lines).

If we now define the modified winding number
$$ W_{12}^0(y) := \sum_{(a,b) \in V_{12}^0} (-1)^{a+b} ( \operatorname{sgn}( y_{1,a} + y_{2,b} - y ) $$
then we see from \eqref{mix} and Proposition \ref{inclusion} that
\begin{equation}\label{cob}
 W_{12}^0( - y_{3, r} ) = 0
\end{equation}
for all $0 \leq r \leq k_3+1$.  From \eqref{sam} and Fubini's theorem we see that
$$ \int_{-T}^\infty W_{12}^0(y)\ dy = \sum_{j=1}^{k_1} (-1)^{j-1} y_{1,j} + \sum_{j=1}^{k_2} (-1)^{j-1} y_{2,j} + T$$
and
\begin{equation}\label{breathe}
 \int_{-\infty}^T (1-W_{12}^0(y))\ dy = -\sum_{j=1}^{k_1} (-1)^{j-1} y_{1,j} + \sum_{j=1}^{k_2} (-1)^{j-1} y_{2,j} + T
\end{equation}
for sufficiently large $T$.

On the set $\{ y: W_3(-y) = +1 \}$, we now see that the function $W_{12}^0$ is locally constant (since, by definition of $W_{12}^0$, all the discontinuities $y_{1,p} + y_{2,q}$ of $W_{12}^0$ lie in the set $\{ y: W_3(-y) = 0\}$) and equal to $0$ on the boundary (thanks to \eqref{cob}).  This gives the inclusion
\begin{equation}\label{inclusio}
\{ y: W_3(-y) = +1 \} \subset \{ y: W_{12}^0(y) = 0\}
\end{equation}
which generalises (a permutation of) \eqref{peop}.  This gives some (but not all) cases of Conjecture \ref{adf}:

\begin{proposition}\label{tap}  Conjecture \ref{adf} holds under the additional assumption that the function $W_{12}^0(y) \leq 1$ for all $y$.
\end{proposition}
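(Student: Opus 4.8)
The plan is to rewrite the quantity in \eqref{yio} as an integral of a difference of winding numbers, show that this difference is pointwise $\le 0$, and then upgrade the inequality to a strict one. Concretely, for $T$ sufficiently large the Fubini identity for $W_{12}^0$ recorded just before \eqref{breathe} gives
\begin{equation*}
\int_{-T}^\infty W_{12}^0(y)\,dy = \sum_{j=1}^{k_1}(-1)^{j-1}y_{1,j} + \sum_{j=1}^{k_2}(-1)^{j-1}y_{2,j} + T ,
\end{equation*}
while \eqref{fubini-sym} applied with $i=3$ gives $\int_{-T}^\infty (1-W_3(-y))\,dy = -\sum_{j=1}^{k_3}(-1)^{j-1}y_{3,j} + T$. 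Subtracting, and noting that for $T$ large the integrand $W_{12}^0(y)-(1-W_3(-y))$ is supported in a fixed compact set (both terms stabilise to $1$ as $y\to-\infty$ and to $0$ as $y\to+\infty$), we obtain
\begin{equation*}
\sum_{i=1}^3\sum_{j=1}^{k_i}(-1)^{j-1}y_{i,j} = \int_{-T}^\infty \bigl(W_{12}^0(y) - (1-W_3(-y))\bigr)\,dy ,
\end{equation*}
so it suffices to prove this integral is strictly negative.

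Next I would prove the pointwise bound $W_{12}^0(y)\le 1-W_3(-y)$ for almost every $y$. By Lemma \ref{cjct} applied to $W_3$, the function $y\mapsto W_3(-y)$ takes only the values $0$ and $1$ off the finite set $\{-y_{3,1},\dots,-y_{3,k_3}\}$. Where $W_3(-y)=1$, the right-hand side is $0$ and the inclusion \eqref{inclusio} forces $W_{12}^0(y)=0$, so the bound holds (in fact with equality). Where $W_3(-y)=0$, the right-hand side equals $1$ and the bound is exactly the extra hypothesis $W_{12}^0\le 1$ of the proposition; this is the one place that hypothesis is used, and indeed without it the comparison can genuinely fail, in keeping with the ``imbalance of holes'' discussion surrounding Figure \ref{Kfig}. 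Integrating the bound gives $\int_{-T}^\infty(W_{12}^0(y)-(1-W_3(-y)))\,dy\le 0$.

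It remains to make this strict, and this strictness is the only genuinely delicate point (the hypothesis having trivialised the pointwise comparison). I would argue by contradiction: if the integral vanished, then since the integrand is $\le 0$ almost everywhere we would have $W_{12}^0(y)=1-W_3(-y)$ for a.e.\ $y$, hence $W_{12}^0(y)=1$ for a.e.\ $y$ in $\{y:W_3(-y)=0\}$. Pick $r^*$ with $y_{3,r^*}=\max_j y_{3,j}$. For $y$ slightly less than $-y_{3,r^*}$ one has $-y>\max_j y_{3,j}$, so $W_3(-y)=0$, and thus $W_{12}^0(y)=1$ for a.e.\ such $y$. On the other hand, every discontinuity of $W_{12}^0$ occurs at a sum $y_{1,a}+y_{2,b}$ with $(a,b)\in V_{12}^0$, and for such $(a,b)$ one has $W_3(-(y_{1,a}+y_{2,b}))=0$ by definition of $V_{12}^0$, whereas $W_3(y_{3,r^*})=\tfrac12$ by Lemma \ref{cjct}; hence $-y_{3,r^*}$ is not a discontinuity of $W_{12}^0$, so $W_{12}^0$ is locally constant, and in particular continuous, at $-y_{3,r^*}$. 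Therefore $W_{12}^0(-y_{3,r^*})=1$, contradicting \eqref{cob}. Hence $\int_{-T}^\infty(W_{12}^0(y)-(1-W_3(-y)))\,dy<0$, which is \eqref{yio}, and the proposition follows.
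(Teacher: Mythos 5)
Your proposal is correct and follows essentially the same route as the paper: both rewrite \eqref{yio} via the Fubini identities as an integral of $W_{12}^0(y)-(1-W_3(-y))$, both deduce the pointwise bound $\le 0$ from the inclusion \eqref{inclusio} together with the hypothesis $W_{12}^0\le 1$, and both derive strictness from the fact that the discontinuities $y_{1,a}+y_{2,b}$ of $W_{12}^0$ (with $(a,b)\in V_{12}^0$) cannot coincide with any $-y_{3,j}$. Your argument for strictness (focusing on the maximal $y_{3,r^*}$ and invoking \eqref{cob}) is a slightly more explicit version of the paper's one-line assertion that the endpoints of $\{y: W_3(-y)=1\}$ and of $\{y: W_{12}^0(y)=0\}$ cannot agree.
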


This case of Conjecture \ref{adf} is analogous to the case of \eqref{dill-3} when the $2$-cycle $\Omega$ appearing in that inequality has a definite sign.

\begin{proof}  The inclusion \eqref{inclusio} is strict, because the endpoints of the set $\{y: W_3(-y) = +1\}$ cannot agree with any of the endpoints of $\{ y: W_{12}^0(y)=0\}$.  We conclude (using Lemma \ref{cjct} and the hypothesis $W_{12}^0 \leq 1$) that for sufficiently large $T$ that
$$ \int_{-\infty}^T W_3(-y)\ dy < \int_{-\infty}^T (1 - W_{12}^0(y))\ dy$$
and the desired inequality \eqref{yio} then follows from \eqref{fubini}, \eqref{breathe}.
\end{proof}

This observation can handle several further cases of Conjecture \ref{adf} (e.g. the perturbative regime in which the $y_{2,1},\dots,y_{2,k_2}$ are very small compared to the differences between the $y_{1,1},\dots,y_{1,k_1}$).  Unfortunately it is possible for $W_{12}^0$ to exceed $1$, which means that one cannot resolve Conjecture \ref{adf} purely on the strength of the inclusion \eqref{inclusio}.  However, it appears from numerous examples that whenever this occurs, a significant portion of the set $\{y: W_{12}(y)^0 = 0\}$ is ``closed off'' from $W_3$, in that the set $\{y: W_3(-y)=+1\}$ is prohibited from entering that portion, which restores the truth of Conjecture \ref{adf}; the author was able to make this statement rigorous in the case $k_3=3$ by a rather lengthy and \emph{ad hoc} argument, which unfortunately does not seem to extend to the general case.  Rather than present this (somewhat unenlightening) argument here, we give an example to illustrate this ``closing off'' phenomenon.  We will take $k_1=k_2=3$ and $y_{1,1} < y_{1,2} < y_{1,3}$ and $y_{2,1} < y_{2,2} < y_{2,3}$ (this ordering is consistent with the non-crossing hypothesis (i)).  We will assume that we are in the ``almost perturbative setting'' in which the nine sums $s_{j_1,j_2} \coloneqq y_{1,j_1}+y_{2,j_2}$ for $j_1,j_2=1,2,3$ are ordered by the relations
$$
s_{1,1} < s_{1,2} < s_{1,3}, s_{2,1} < s_{2,2}  < s_{2,3} <  s_{3,1} < s_{3,2} < s_{3,3}.
$$
thus the only uncertainty in the ordering of these nine sums arises from the relative positions of $s_{1,3}$ and $s_{2,1}$; clearly both orderings are possible.  These relations imply the further inequalities
$$ y_{2,2}-y_{2,1}, y_{2,3}-y_{2,2} < y_{1,2}-y_{1,1}, y_{1,3}-y_{1,2}.$$
By this and many applications of Proposition \ref{inclusion}(ii) we can see that $W_3(-y_{1,p}-y_{2,q})=0$ for all $p,q \in \{0,1,2,3,4\}$, hence $V_{12}^{+1}$ is empty and $W_{12}^0 = W_{12}$ in this case.

First suppose one is in the ``fully perturbative'' setting where $s_{1,3} < s_{2,1}$ (this for instance occurs when all the $y_{2,1}, y_{2,2}, y_{2,3}$ are small compared to the differences $y_{1,2}-y_{1,1}$ and $y_{1,3}-y_{1,2}$).  In this case the winding number $W_{12} = W_{12}^0$ only takes the values $0$ and $1$, with the former occurring on the intervals
\begin{equation}\label{void}
(s_{1,1},s_{1,2}) \cup (s_{1,3}, s_{2,1}) \cup (s_{2,2}, s_{2,3}) \cup (s_{3,1}, s_{3,2}) \cup (s_{3,3}, +\infty),
\end{equation}
and Proposition \ref{tap} gives \eqref{yio} in this case.  In this case one can make the error in \eqref{yio} arbitrarily small; for instance if one takes $k_3=9$ and 
\begin{align*}
y_{3,1} &= - s_{1,1} - \eps \\
y_{3,2} &= - s_{1,2} + \eps \\
y_{3,3} &= - s_{1,3} - \eps \\
y_{3,4} &= - s_{2,3} + \eps \\
y_{3,5} &= - s_{2,2} - \eps \\
y_{3,6} &= - s_{2,1} + \eps \\
y_{3,7} &= - s_{3,1} - \eps \\
y_{3,8} &= - s_{3,2} + \eps \\
y_{3,9} &= - s_{3,3} - \eps 
\end{align*}
for some sufficiently small $\eps > 0$, one can check that the hypotheses of Conjecture \ref{adf} hold, and the left and right-hand sides of \eqref{yio} differ by $9\eps$; see Figure \ref{good}.

\begin{figure} [t]
\centering
\includegraphics[width=5in]{./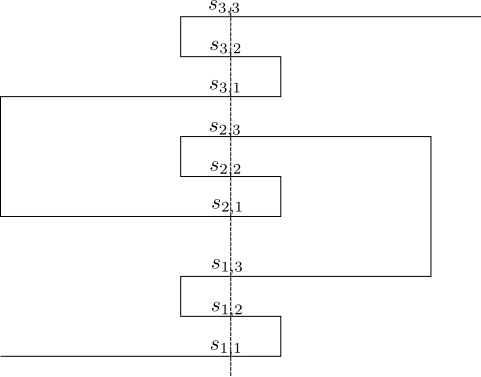}
\caption{The perturbative case.  The solid line represents those sums $(x, y_1+y_2)$, where $(x,y_1)$ lies on a simple curve passing through $(0,y_{1,1}), (0,y_{1,2}), (0,y_{1,3})$, and $(x,y_2)$ lies on a simple curve passing through $(0,y_{2,1}), (0,y_{2,2}), (0, y_{2,3})$.  Note how the entire region \eqref{void} (viewed as a subset of the $y$-axis, drawn here as a dashed line) lies above the solid line, in the sense that it is connected to $(0,T)$ for arbitrarily large and negative $-T$.  The points $(0,-y_{3,i})$ for $i=1,\dots,9$ (not pictured) lie just above curve in this region.}
\label{good}
\end{figure}

Now suppose that $s_{1,3} \geq s_{2,1}$.  In this case, $W_{12} = W_{12}^0$ now takes the value of $0$ on the intervals
$$ (s_{1,1},s_{1,2}) \cup (s_{2,2}, s_{2,3}) \cup (s_{3,1}, s_{3,2}) \cup (s_{3,3}, +\infty)$$
but is additionally equal to $+2$ on the interval $(s_{3,1}, s_{2,3})$.  The argument used to prove Proposition \ref{tap} then fails to establish \eqref{yio}, incurring instead an additional additive error of $s_{1,3}-s_{2,1}$.  However, in this case the portion $(s_{2,2},s_{2,3})$ of $\{ y: W_{12}(y)=1\}$ now becomes ``closed off'' from the points $-y_{3,1},\dots,-y_{3,k_3}$, in the sense that none of the $-y_{3,j}$ can lie in this interval, which also implies that $W_3(-y)$ cannot equal $1$ in this interval either.  This lets one improve the bound arising from \eqref{inclusio} by a factor of $s_{2,3} - s_{2,2}$, which exceeds the loss of $s_{1,3} - s_{2,1}$ incurred previously because $s_{2,1} > s_{1,2}$.  This restores Conjecture \ref{adf} in this case.  To see why none of the $-y_{3,j}$ lie in $(s_{2,2},s_{2,3})$, suppose for contradiction that this were not the case, and let $1 \leq p \leq k_3$ be the largest index such that $-y_{3,p} \in (s_{2,2},s_{2,3})$.  This index $p$ cannot equal $k_3$, because this would imply that the pairs $\{ y_{1,1}, y_{1,2}\}$, $\{ y_{2,3}, y_{2,4} \}$ and $\{ y_{3,p}, y_{3,p+1}\}$ have crossing sums (only one of the eight sums from these pairs is positive), contradicting Conjecture \ref{adf}(ii).  The same argument excludes the case when $p$ is odd and less than $k_3$, since in this case $y_{3,p+1}$ avoids $(s_{2,2},s_{2,3})$ by hypothesis, and also avoids $(s_{1,3},s_{2,2})$ since $W_{12}^0$ equals $+1$ there (here is where we use $s_{1,3} \geq s_{2,1}$), so one has an odd number of positive sums in this case.  Finally, the index $p$ cannot be even, because $-y_{3,p+1}$ lies outside $(s_{2,2}, s_{2,3})$ and also outside $(s_{3,2}, s_{3,3})$ (as $W_{12}$ equals $+1$ there) and hence the triple $\{ y_{1,2}, y_{1,3}\}$, $\{ y_{2,2}, y_{2,3}\}$, $\{y_{3,p}, y_{3,p+1}\}$ would be crossing (this has an odd number of positive sums), again contradicting Conjecture \ref{adf}(ii).  Thus Conjecture \ref{adf} holds in all of these cases.  More generally, the author has observed numerically that every creation of a region where $W_{12}^0$ exceeds $1$ will invariably be accompanied by a larger region of $\{ W_{12}^0(y)=0\}$ which is now ``closed off'' from $W_3$, and was able to verify this claim rigorously when $k_1=3$ or $k_2=3$ by \emph{ad hoc} methods, but was unable to see how to establish such a claim in general.

\begin{figure} [t]
\centering
\includegraphics[width=5in]{./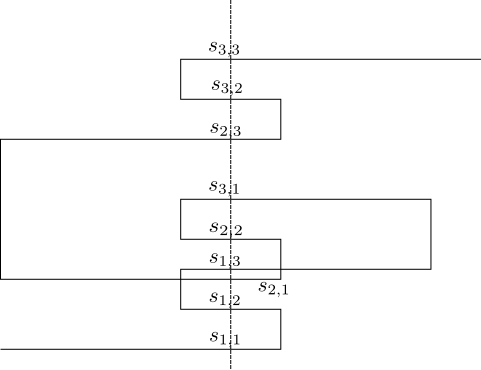}
\caption{The non-perturbative case.  There is now a region of winding number $+2$ between $s_{2,1}$ and $s_{1,3}$.  But to compensate for this, the region between $s_{2,2}$ and $s_{3,1}$, which still has a winding number of $0$, has been cut off from $(0,T)$ for large $T$.  This cut-off region is necessarily larger (as measured as a portion of the $y$-axis) than the region of winding number $+2$.}
\label{sad}
\end{figure}

\end{document}